\newtheorem{theorem}{Theorem}[section]
\newtheorem{remark}[theorem]{Remark}
\newtheorem{assumption}{Assumption}
\newtheorem{definition}[theorem]{Definition}
\newtheorem{proposition}[theorem]{Proposition}
\newtheorem{lemma}[theorem]{Lemma}
\newcommand{\tr}{\mathrm{t}\mathrm{r}} %trace Operator
\title{Graphon Quantum Filtering Systems}
\author{
  Hamed Amini\thanks{University of Florida. Email: \texttt{aminil@ufl.edu}} \and
  Nina H. Amini\thanks{CNRS, L2S, CentraleSupélec, Université Paris-Saclay. Email: \texttt{nina.amini@centralesupelec.fr}} \and
  Sofiane Chalal\thanks{L2S, CentraleSupélec, Université Paris-Saclay. Email: \texttt{sofiane.chalal@centralesupelec.fr}} \and
  Gaoyue Guo\thanks{MICS and CNRS FR-3487, CentraleSupélec, Université Paris-Saclay. Email: \texttt{gaoyue.guo@centralesupelec.fr}}
}
\date{} % leave empty or provide a date
\begin{document}

\maketitle

\begin{abstract}
We consider a non-exchangeable system of interacting quantum particles with mean-field type interactions, subject to continuous measurement on dense graphs. In the mean-field limit, we derive a graphon-based quantum filtering system, establish its well-posedness, and prove propagation of chaos for multi-class bosonic systems with blockwise interactions. We then discuss applications to quantum state preparation and quantum graphon games.

\bigskip
\noindent {\bf Keywords:}  interacting particle systems; quantum filtering; graphon mean-field limits; stochastic networks; non-exchangeable particles

\bigskip
\noindent {\bf  Mathematics Subject Classification:} 81S25, 81S22, 60K35, 60H30, 49N80

\end{abstract}

\setcounter{tocdepth}{1}
\tableofcontents

\section{Introduction}\label{sec:intro}

{
The study of large systems of interacting quantum particles is fundamental for understanding collective behavior in microscopic and mesoscopic regimes. As the number of particles increases, directly solving the many-body Schrödinger equation becomes infeasible due to the exponential growth of the underlying Hilbert space. Mean-field limits provide an effective alternative by replacing full interacting dynamics with equations that capture averaged interactions \cite{spohn80kinetic,erdHos2010derivation,lewin14derivation}.
Establishing rigorous connections between exact many-body dynamics and their mean-field approximations is a central problem in mathematical physics. Beyond their conceptual importance, mean-field models increasingly support emerging quantum technologies, guiding the control of large-scale quantum simulators, informing the development of efficient quantum algorithms, and providing theoretical foundations for scalable quantum architectures.

Most existing mean-field results focus on closed quantum systems and deterministic macroscopic observables \cite{tindall22,pearce1981mean}. In contrast, many physical platforms are open, continuously monitored, and subject to measurement backaction~\cite{wiseman09quantum,handel05}, making the development of a mean-field theory at the level of quantum trajectories both conceptually important and technologically motivated. A further source of complexity comes from the interaction structure. While most of the mean-field literature assumes homogeneous or exchangeable interactions, many real systems exhibit heterogeneous coupling patterns \cite{tindall22}. From a mathematical standpoint, this setting is considerably more challenging: the mean-field limit for non-exchangeable quantum systems has not yet been rigorously established, and the limiting description leads to a continuous family of coupled operator-valued stochastic differential equations whose well-posedness requires new probabilistic tools.

In this paper, we develop a formal framework for systems of quantum particles subjected to continuous measurement and their mean-field graph limits. Such models naturally give rise to dynamic quantum games and, by extension, quantum mean-field games (QMFGs) \cite{kolokoltsov22dynamic,kolokoltsov22qmfg}, but their relevance extends far beyond game-theoretic applications.

In standard closed quantum mechanics, the dichotomy between the state evolution (Schrödinger equation) and the measurement postulate (Born rule) prevents the observation of significant dynamics, a phenomena known as the Zeno effect \cite{misra77zeno}. To overcome this, one must consider open quantum systems subjected to indirect measurement, where the natural way to incorporate continuous measurements is provided by the quantum filtering framework \cite{belavkin03eventum,boutenhandel07,boutenhandel09}. The a posteriori state evolution is then described by a a stochastic modification of the Schrödinger equation, known as the Belavkin or quantum filtering equation. Depending on the nature of the measurement scheme, two types of quantum filtering equations arise:

\begin{itemize}
    \item Diffusive equation (homodyne detection): 
    {{\small} \begin{align*}
    \mathrm{d}\rho_t &= -\mathrm{i}[\tilde{H},\rho_t]\mathrm{d}t + \left(L\rho_t L^{\dagger} - \tfrac{1}{2}\{L^{\dagger}L,\rho_t\}\right)\mathrm{d}t + \sqrt{\eta}\left(L\rho_t + \rho_tL^{\dagger} - \tr\big((L + L^{\dagger})\rho_t\big)\rho_t\right)\mathrm{d}W_t,
    \end{align*}}
    where $(W_t)_{t\geq0}$ is a Wiener process (see Section~\ref{sec:QF} for the meaning of the other terms),  with the observation process
            {{\small} \begin{align*}
            Y_t &= W_t + \sqrt{\eta}\int_{0}^{t}\tr\big((L+L^{\dagger})\rho_{s}\big)\mathrm{d}s.
            \end{align*}}
    
    \item Jump equation (photon counting): 
    {{\small} \begin{align*}
    \mathrm{d}\rho_{t} = -\mathrm{i}[\tilde{H},\rho_t]\mathrm{d}t + \left(L\rho_t L^{\dagger} - \frac{1}{2}\{L^{\dagger}L,\rho_t\}\right)\mathrm{d}t + \left(\frac{L\rho_tL^{\dagger}}{\tr(L\rho_tL^{\dagger})} - \rho_t\right)\big(\mathrm{d}{N}_t - {\eta}\,\tr(L\rho_tL^{\dagger})\mathrm{d}t\big),
    \end{align*}}
    where $({N}_t)_{t \geq 0}$ is a counting process whose stochastic intensity is given by   $\int_{0}^{t}\tr(L\rho_sL^{\dagger})\mathrm{d}s$ and the observation process is $Y_t = N_t.$
        \end{itemize}

The quantum filtering equations can be derived using quantum probability and quantum stochastic calculus~\cite{parthasarathy12,boutenhandel07}. However, they may also be obtained within standard quantum mechanics as suitable continuous-time limits of repeated measurement schemes~\cite{pellegrini10markov,pellegrini08diffusive,pellegrini10jump}, or through the theory of quantum instruments~\cite{BB91,bar95const}. In this paper we focus only on the diffusive case. %and we will specify the significance of the terms later.

In his seminal series of papers, Kolokoltsov \cite{kolokoltsov21law,kolokoltsov22qmfg} developed the quantum mean-field games using the $N$-body quantum filtering equation as a starting point:
{{\small} \begin{align*}
\mathrm{d}\boldsymbol{\rho}_t^{N} &= -\mathrm{i}{\Big[\sum_{l=1 }^{N}\mathbf{\tilde{H}}_{l} + \frac{1}{N}\sum_{l=1}^{N}\sum_{l' > l}\mathbf{A}_{ll'},\boldsymbol{\rho}_{t}^{N}\Big]}\mathrm{d}t
+ \sum_{l=1}^{N}\Big({\bf L}_l\boldsymbol{\rho}_{t}^{N}{\bf L}^{\dagger}_{l} - \frac{1}{2}\big\{{\bf L}^{\dagger}_{l}{\bf L}_{l},\boldsymbol{\rho}_{t}^{N}\big\}\Big)\mathrm{d}t\nonumber\\
&\quad +\sum_{l = 1}^{N}\Big({\bf L}_l\boldsymbol{\rho}_{t}^{N} + \boldsymbol{\rho}_{t}^{N}{\bf L}^{\dagger}_l - \tr\big(({\bf L}_l + {\bf L}_l^{\dagger})\boldsymbol{\rho}_{t}^{N}\big)\boldsymbol{\rho}_{t}^{N}\Big)\mathrm{d}W_t^{l},
\end{align*}}
where $\boldsymbol{\rho}_{t}^{N}$ denotes the state of the $N$-particle system evolving on a high-dimensional tensor-product Hilbert space. In contrast with the classical setting, due to entanglement between quantum particles, we cannot separate the individual dynamics of the particles, and the concept of an empirical measure becomes meaningless in the quantum setting. Therefore, one must adopt a different methodology to establish propagation of chaos results. Kolokoltsov adapted the Pickl method \cite{pickl11simple} to the stochastic setting to derive the corresponding limiting equation
{{\small}
\begin{align*}
\mathrm{d}\gamma_t&=-\mathrm{i}[\tilde{H}+A^{\mathbb{E}_{\mathbb{P}}[\gamma_t]},\gamma_t]\mathrm{d}t+(L\gamma_tL^{\dagger}-\tfrac12\{L^{\dagger}L,\gamma_t\})\mathrm{d}t+\big(L\gamma_t+\gamma_tL^{\dagger}-\mathrm{tr}\!\big((L+L^{\dagger})\gamma_t\big)\gamma_t\big)\mathrm{d}W_t.
\end{align*}}
This leads to a new McKean–Vlasov type equation formulated on an infinite-dimensional complex-valued Hilbert space. The analysis in \cite{kolokoltsov21law,kolokoltsov22qmfg} addresses only homogeneous interactions. The present work develops the corresponding theory for heterogeneous interactions, allowing the coupling strengths between particles to vary.
%\medskip
}

\paragraph*{Primary contributions.}
{We first develop a mathematical framework for continuously observed, interacting quantum systems with heterogeneous coupling and establish their mean-field graph limits. Using graphon theory, we derive the graphon-based quantum filtering system
{{\small}
\begin{align*}
\mathrm{d}\gamma_{t,u}
&= -\mathrm{i}\Bigl[\tilde{H}
  + \int_{0}^{1}w(u,v)A^{\mathbb{E}_{\mathbb{P}}[\gamma_{t,v}]}\mathrm{d}v,\gamma_{t,u}\Bigr]\mathrm{d}t
  + \Bigl(L\gamma_{t,u}L^{\dagger}
    - \tfrac{1}{2}\{L^{\dagger}L,\gamma_{t,u}\}\Bigr)\mathrm{d}t\\
&\quad + \sqrt{\eta}\Bigl(L\gamma_{t,u}
    + \gamma_{t,u}L^{\dagger}
    - \mathrm{tr}\bigl((L+L^{\dagger})\gamma_{t,u}\bigr)\gamma_{t,u}\Bigr)\mathrm{d}W^u_t,
\quad u \in I := [0,1],
\end{align*}}
where $\mathbf{W}=\{W^u : u\in I \}$ is a collection of essentially pairwise independent Wiener processes, and $w$ is a graphon function {{\small} $w : I\times I \to I$}, indicating the strength of the pairwise interaction. Establishing the well-posedness of this system requires addressing some obstacles that do not appear in previous cases dealing with Belavkin diffusive equation.}

{The first difficulty lies in identifying an appropriate probability space on which to formulate our system of stochastic differential equations. Since the system is driven by a continuum family of independent Wiener processes, serious measurability issues arise. In fact, it is well known in the literature that a continuum family of independent Wiener processes cannot be constructed on a standard probability space while preserving joint measurability (see, for instance, \cite{judd85law} or \cite[Section 3.7]{carmonadelarue18I}). To overcome this difficulty, we specify a probability space that ensures the joint measurability of $\mathbf{W}$ via a Fubini extension \cite{sun06exact} which extended the usual product space.}

{The second difficulty is to identify  an appropriate valued space. Indeed, although each $\gamma_{t,u}$ is a trace-class operator, the space of trace-class operators $\mathcal{B}_1(\mathbb{H})$ forms a Banach space where Itô theory is ill-defined. Extensions of stochastic calculus via UMD theory~\cite{UMD15} (the framework for Banach spaces with unconditional martingale differences) do not apply here, as the space of trace-class operators does not belong to this category. Consequently, following the approach of~\cite{kolokoltsov25quantumstoeq}, we work within the larger space of Hilbert--Schmidt operators $\mathcal{B}_2(\mathbb{H})$. Equipped with the appropriate inner product, this forms the Liouville space $\mathcal{L}_2(\mathbb{H})$, which is a Hilbert space~\cite{lonigro24liouville}. This allows us to invoke established results for Hilbert space-valued SDEs. The proof then proceeds as follows. Since the trace operation is unbounded on the Liouville space, standard Lipschitz arguments cannot be applied directly. To overcome this, we parametrize the law dependence by a deterministic function and linearize the system to obtain a strong solution via standard arguments for Hilbert space-valued SDEs. We then apply a version  of Girsanov transformation on the Fubini extension space to construct a weak solution for parametrized non-linear system.  
For the delicate issue of positivity, we extend to the infinite-dimensional Hilbert space the argument of \cite{mirrahimiHandel07} which consists in dealing with the complete observed evolution and using conditional expectation to project into the imperfect observed equation. We then prove pathwise uniqueness and invoke the Hilbert space formulation of the Yamada–Watanabe theorem to obtain a strong solution. Finally, to complete the well-posedness result, we establish stability result with respect to the underlying graphon function.} 

{A second major objective of the paper is to establish the trajectory-level mean-field limit for non-exchangeable open quantum systems. To the best of our knowledge, such limits have not been studied in the literature. We consider the block-wise graphon case under perfect measurement efficiency $(\eta=1)$, where the limiting system can be described by a representative trajectory for each block. We extend Pickl’s method by introducing a weighted functional that quantifies the total number of “bad’’ particles across all blocks. This provides, for the first time, a propagation-of-chaos type result for heterogeneous open quantum systems, providing an initial step toward a general theory of mean-field limits in non-exchangeable quantum settings.

Finally, we discuss applications. We first examine quantum state preparation through continuous measurement and feedback, showing that graphon quantum filtering significantly reduces the complexity of feedback-based stabilization in many-body systems. In particular, each particle’s state converges to an eigenstate of the measurement operator, and appropriate feedback laws stabilize the population toward any desired eigenstate. We then introduce the concept of a graphon quantum dynamic game, demonstrating how the limiting model extends classical mean-field game ideas. The paper concludes with several perspectives and directions for future research.}

\paragraph*{Related literature.}
The study of mean-field limits and the associated notion of propagation of chaos for exchangeable interacting particle systems dates back to the seminal work of Kac \cite{kac56}. Since then, a vast literature has developed; see \cite{chaintron22I, chaintron22II} or \cite{mischler13kac} for comprehensive reviews.
A growing area of interest is the mean-field limit for non-exchangeable interacting particle systems. Using graphon theory, many works have established mean-field limits for dense graphs (see \cite{coppini22note, ayi24large, bet24weakly, bayraktar23graphon}) and for sparse graphs (see \cite{crucianelli24interacting, jabin25mean}). Within the graphon framework, extensions of classical mean-field games have also been proposed, see \cite{amini23graphon, caines19graphon, lacker22label, aurell22sto, parise23graphon}.
In the quantum setting, the notion of quantum propagation of chaos for exchangeable systems dates back to Spohn \cite{spohn80kinetic}, and substantial literature has followed, see, e.g., \cite{alicki83nonlinear, bardos00weak, lewin14derivation, pickl11simple, porat24pickl, kolokoltsov21law}.
{From a physics perspective and in the static case, \cite{tindall22, searle24thermo} examined graphon-type interaction structures in spin closed systems, but without addressing the question of the justification of mean-field limit. The present work can be viewed as a first step in that direction.}

%\medskip
\paragraph*{Organization.} 
The paper is organized as follows.
In Section~\ref{sec:pre}, we recall the main notions from quantum filtering theory, graphon theory, and the Fubini extension.
Section~\ref{sec:particles} presents the formalism of non-exchangeable observed interacting quantum particles.
In Section~\ref{sec:MF}, we introduce the limiting graphon quantum filtering system in a heuristic manner and then study its well-posedness.
Section~\ref{sec:Chaos} is devoted to the rigorous derivation of this system for blockwise interactions under perfect measurement efficiency.
Section~\ref{sec:Application} discusses applications to quantum state preparation and graphon-based quantum dynamic games.
Section~\ref{sec:concludes} concludes the paper with some perspectives for future research.
Appendix~\ref{sec:appA} discusses the principle of indistinguishability in quantum mechanics, while Appendix~\ref{sec:appB} contains technical results concerning the linear and nonlinear equations.

%\medskip
\paragraph*{Some notations and facts.}

The sets of natural, integer, real, and complex numbers are denoted by $\mathbb{N}, \mathbb{Z}, \mathbb{R}, \mathbb{C}$, respectively.
For any $z \in \mathbb{C}$, $\overline{z}$ denotes its complex conjugate, and the roman symbol $\mathrm{i}$ is used for the pure imaginary unit.
We write $\mathbb{I}_A$ for the indicator of a set $A$.
For a probability measure $\nu$, $\mathbb{E}_{\nu}$ denotes expectation under $\nu$. For $k \in \mathbb{N}$, we use the shorthand $[k] := \{1,\dots,k\}$ and denote by $\mathfrak{S}_k$ the set of all permutations of $[k]$.

Let $T > 0$ be a fixed finite time horizon. For a Banach space $E$, let $\mathfrak{B}(E)$ denote its Borel $\sigma$-field. We define $\mathcal{C}_{E} := \mathcal{C}([0,T],E)$ as the space of continuous functions from $[0,T]$ to $E$, endowed with the topology of uniform convergence. 

For a measure space $(J,\Sigma,\mathrm{r})$, define
$$
L^2_{E}(J,\mathrm{r})
:=\Big\{ X:J\to E \mid X \;\text{is }\Sigma\text{--}\mathfrak{B}(E)\text{-measurable and }
\int_J \|X(s)\|_E^2\mathrm{r}(\mathrm{d}s)<\infty \Big\}.
$$
As usual, we identify elements of $L^2_{E}(J,\mathrm{r})$
that are equal $\mathrm{r}$-almost everywhere (a.e.).

We fix the notation $\mathbb{H}$ for a complex separable Hilbert space endowed with the scalar product $\langle \cdot,\cdot\rangle : \mathbb{H}\times \mathbb{H} \to \mathbb{C}$, with associated norm $\|\psi \| := \sqrt{\langle \psi,\psi\rangle}$. We use both the notation $\psi$ and the “ket” $\ket{\psi}$ for a vector in $\mathbb{H}$, and the “bra” $\bra{\psi}$ for an element of the dual space $\mathbb{H}^{\star} \simeq \mathbb{H}$. Thus, $\braket{\psi|\psi'} := \braket{\psi,\psi'}$ is the inner product between ${\psi}$ and ${\psi'}$.
We define $\mathcal{B}(\mathbb{H})$ as the space of bounded operators with norm 
$$ \|O \|_{\text{op}} := \sup_{\|\psi\|\leq 1} \|O\psi\| < \infty, $$
and we often omit the subscript “op”  writing simply $\|O\|$. We denote by $O^{\dagger}$ the adjoint of $O \in \mathcal{B}(\mathbb{H})$, and we write $\langle O\rangle_{\psi} := \bra{\psi}O\ket{\psi}$.
We write $\mathbf{1}$ for the identity operator on the underlying Hilbert space in use (e.g., $\mathbb{H}$, $\mathbb{H}^{\otimes N}$, etc.). 

The commutator and anti-commutator are defined as follows: for $O_1, O_2 \in \mathcal{B}(\mathbb{H})$,
{{\small} \begin{align*}
    [O_1,O_2] := O_1O_2 - O_2O_1 \; \text{ and } \; \{O_1,O_2\} := O_1O_2 + O_2O_1.
\end{align*}}
For any operator $O \in \mathcal{B}(\mathbb{H})$ and $l \in \{1,\dots,N\}$, we denote by
$$
\mathbf{O}_{l} := \mathbf{1}\otimes\dots\otimes{O}\otimes\dots\otimes\mathbf{1},
$$
the operator in $\mathcal{B}(\mathbb{H}^{\otimes N})$ that acts as $O$ on the $l$-th tensor factor and as the identity on all the others (i.e., an operator on the Hilbert space $\mathbb{H}^{\otimes N}$).

For any operator $B \in \mathcal{B}(\mathbb{H}\otimes\mathbb{H})$ and $l,l' \in \{1,\dots,N \}$, we denote by $\mathbf{B}_{ll'}$ the operator on $\mathcal{B}(\mathbb{H}^{\otimes N})$ that acts only on $\mathbb{H}_{l}$ and $\mathbb{H}_{l'}$.
We denote by $\mathcal{B}_{1}(\mathbb{H})$ the space of trace-class operators and by $\mathcal{B}_{2}(\mathbb{H})$ the space of Hilbert–Schmidt operators: 
{{\small}\begin{align*}
    O \in \mathcal{B}_1(\mathbb{H}) \Longleftrightarrow \tr(|O|) < \infty,\
    O \in \mathcal{B}_2(\mathbb{H}) \Longleftrightarrow \tr(OO^{\dagger}) < \infty.
\end{align*}
}

The Liouville space $\mathcal{L}_{2}(\mathbb{H})$ is the Hilbert space of all Hilbert–Schmidt operators on $\mathbb{H}$ endowed with the scalar product  
{{\small} \begin{align*}
 \langle O_1, O_2 \rangle_{2} := \sum_{n \in \mathbb{N}} \langle O_1\psi_n,O_2\psi_n\rangle,
\end{align*}}
for all $O_1,O_{2} \in \mathcal{B}_{2}(\mathbb{H})$, where $\{\psi_n\}$ is an orthonormal basis of $\mathbb{H}$.

In quantum mechanics, a state is described by an operator in the space of density operators $\mathcal{S}(\mathbb{H})$, defined as
{{\small} \begin{align*}
    \mathcal{S}(\mathbb{H}) := \Big\{ \rho \in \mathcal{B}_{1}(\mathbb{H})\; \Big| \; \rho \geq 0, \; \tr(\rho) = 1, \; \rho= \rho^{\dagger}  \Big\}.
\end{align*}}
An important special case of a quantum state is a pure state (or wavefunction state), which represents a state of maximal knowledge. A state $\rho \in \mathcal{S}(\mathbb{H})$ is said to be pure if it can be written as a rank-one projector, i.e., there exists a vector $\psi \in \mathbb{H}$ such that $\rho = \ket{\psi}\bra{\psi}$.

%\medskip
Throughout the paper, we use $C$ to denote various constants and $C(p)$ to emphasize dependence on some parameter $p$. Their values may change from line to line.

\section{Preliminaries}\label{sec:pre}

{In this section we recall main notions and results from quantum filtering theory, graphon theory, and the Fubini extension.}

\subsection{Quantum filtering}\label{sec:QF}
The theory of quantum filtering, with its fundamental principle of non-demolition measurement, was pioneered by Belavkin \cite{belavkin83theory,belavkin87non,belavkin92quantum}. 
It provides a rigorous framework for the real-time observation and control of quantum dynamical systems without destroying their state. 
For a comprehensive introduction to the theory, we refer the reader to \cite{boutenhandel07,boutenhandel09,gough18introduction}. 

In the case of homodyne measurement, the evolution of the state is described by a diffusive stochastic differential equation, known as the Belavkin equation:
{{\small} \begin{align}\label{belavkinhomodyne} \mathrm{d}\rho_t &= -\mathrm{i}[\tilde{H},\rho_t]\mathrm{d}t + \left(L\rho_t L^{\dagger} - \tfrac{1}{2}\{L^{\dagger}L,\rho_t\}\right)\mathrm{d}t + \sqrt{\eta}\left(L\rho_t + \rho_tL^{\dagger} - \tr\big((L + L^{\dagger})\rho_t\big)\rho_t\right)\mathrm{d}W_t,\nonumber\\ \rho_{0} &\in \mathcal{S}(\mathbb{H}), \end{align}}
where $(W_t)_{t \geq 0}$ is a standard Wiener process on a probability space $(\Omega,\mathcal{F},\mathbb{P})$. 
Here $\tilde{H}=\tilde{H}^{\dagger}\in\mathcal{B}(\mathbb{H})$ is the Hamiltonian of the system, $L\in\mathcal{B}(\mathbb{H})$ is the measurement operator. 
The scalar $\eta \in (0,1]$ denotes the measurement efficiency. 
The process $(\rho_t)_{t \geq 0}$ takes values in the space of density operators $\mathcal{S}(\mathbb{H})$. 
The corresponding measurement record associated with $L$ is given by
{{\small} \begin{align*}
    Y_t &= W_t + \sqrt{\eta}\int_{0}^{t}\mathrm{tr}\big((L+L^{\dagger})\rho_{s}\big)\mathrm{d}s.
\end{align*}}
In the filtering approach, the process $(\rho_{t})_{t \geq 0}$ represents the best estimate of the system’s quantum state given the measurement record $(Y_s)_{0 \leq s \leq t}$.
%\medskip
\begin{remark}The Belavkin equation can also be formulated in a more general setting involving the simultaneous continuous observation of several physical observables, corresponding to a family of measurement operators $(L_{1},\dots,L_{M})$ and an $M$-dimensional Wiener process ${W}=(W^{1},\dots,W^{M})$. In this case, the equation takes the form
{{\small} \begin{align*}
    \mathrm{d}\rho_t =& -\mathrm{i}[\tilde{H},\rho_t]\mathrm{d}t + \sum_{l=1}^{M}\left(L_l\rho_t L_l^{\dagger} - \frac{1}{2}\{L^{\dagger}_lL_l,\rho_t\}\right)\mathrm{d}t \\ 
    &+ \sqrt{\eta}\sum_{l=1}^{M}\left(L_l\rho_t + \rho_tL_l^{\dagger} - \tr \big((L_l + L^{\dagger}_l)\rho_t\big)\rho_t\right)\mathrm{d}W^{l}_t.\nonumber    
\end{align*}}
\end{remark}

The main interest of these models lies in the fact that they allow for the incorporation of quantum feedback control (Markovian feedback in the terminology of mean-field games). Indeed, as in stochastic control with partial information, the separation theorem \cite{bensoussan92stochastic} makes it possible to decouple the state estimation problem from the optimal control problem. The theory of quantum filtering then reduces the analysis to the controlled version of \eqref{belavkinhomodyne}, where the Hamiltonian part is modified by introducing a real control parameter $\alpha$ in the set of admissible controls $\mathcal{U}$; so that the equation becomes (see, e.g., \cite{handel05,doherty00quantum}):  
{{\small}\begin{align}\label{belavkinhomodyne}
\mathrm{d}\rho_t&=-\mathrm{i}[\tilde{H}+\alpha_t\hat{H},\rho_t]\mathrm{d}t+(L\rho_t L^{\dagger}-\tfrac12\{L^{\dagger}L,\rho_t\})\mathrm{d}t
+\sqrt{\eta}(L\rho_t+\rho_tL^{\dagger}-\mathrm{tr}\!\big((L+L^{\dagger})\rho_t\big)\rho_t)\mathrm{d}W_t,
\end{align}}
where $\hat{H} = \hat{H}^{\dagger} \in \mathcal{B}(\mathbb{H})$ is the control Hamiltonian.

%\medskip

We can now formulate the optimal control problem given $\alpha$ by fixing an objective function
{{\small} \begin{align*} 
\mathcal{J}\Big((\alpha_s)_{0 \leq s \leq T},\rho_0\Big) := \mathbb{E}_{\mathbb{P}}\Big[\int_{0}^{T}C(\alpha_s,\rho_s)\mathrm{d}s + F(\rho_T) \Big],
\end{align*}}
where $C$ and $F$, representing respectively the running and terminal costs, are assumed to be bounded and differentiable. The objective is to find $\alpha^{\star}$ that minimizes $\mathcal{J}$. 
The associated Hamilton--Jacobi--Bellman equation has been studied in  \cite{gough05hamilton,belavkin09qdpp,belavkin09cybernetics}.

%\medskip

The following proposition addresses an important point concerning the wave-function representation of the state dynamics.  

\begin{proposition}\label{puritypreservation}
If the measurement efficiency is perfect, i.e., $\eta = 1$, and the initial state is pure $(\rho_0 = \ket{\psi_0}\bra{\psi_0})$, then purity is preserved and the dynamics \eqref{belavkinhomodyne} admit the following pure-state representation: 
{{\small}
\begin{align}\label{stoschrodinger}
\mathrm{d}\psi_t &= -\Big(\mathrm{i}\Big(\tilde{H} -  \langle \tfrac{L + L^{\dagger}}{2}\rangle_{\psi_t}\tfrac{L - L^{\dagger}}{2\mathrm{i}}\Big) + \tfrac{1}{2}\Big((L - \langle \tfrac{L + L^{\dagger}}{2}\rangle_{\psi_t})^{\dagger}(L - \langle \tfrac{L + L^{\dagger}}{2}\rangle_{\psi_t})\Big) \Big)\psi_t\mathrm{d}t\nonumber\\
&\qquad\quad + \Big( L -\langle \tfrac{L + L^{\dagger}}{2}\rangle_{\psi_t} \Big)\psi_t\mathrm{d}W_t,  
\end{align}
}
or, equivalently,
{{\small}
\begin{align*}
\mathrm{d}\psi_t &= -\Big(\mathrm{i}\tilde{H} + \tfrac{1}{2}L^{\dagger}L - \tfrac{\langle L + L^{\dagger}\rangle_{\psi_t}}{2}L + \tfrac{\langle L + L^{\dagger}\rangle_{\psi_t}^{2}}{8} \Big)\psi_t\mathrm{d}t 
+ \Big( L -\langle \tfrac{L + L^{\dagger}}{2}\rangle_{\psi_t} \Big)\psi_t\mathrm{d}W_t. 
\end{align*}
}

\end{proposition}
\noindent This pure-state formulation is commonly referred to as the stochastic Schrödinger equation.

{
\begin{proof}
    Let's denote the drift (resp. diffusive) part of the equation \eqref{stoschrodinger} by $\mathrm{F}_t$ (resp. $\mathrm{G}_t$). 
    Then by Itô's rule,
{{\small} \begin{align*}
\mathrm{d}(\ket{\psi_t}\bra{\psi_t})
&= (\mathrm{d}\ket{\psi_t})\bra{\psi_t} 
   + \ket{\psi_t}\mathrm{d}\bra{\psi_t}
   + (\mathrm{d}\ket{\psi_t})(\mathrm{d}\bra{\psi_t})\\
&= \Big(\mathrm{F}_t \ket{\psi_t}\mathrm{d}t + \mathrm{G}_t \ket{\psi_t}\mathrm{d}W_t\Big)\bra{\psi_t} 
+ \ket{\psi_t}\Big(\bra{\psi_t} \mathrm{F}_t^{\dagger} \mathrm{d}t 
   + \bra{\psi_t} \mathrm{G}_t^{\dagger} \mathrm{d}W_t\Big) \\
&\quad + \Big(\mathrm{F}_t \ket{\psi_t}\mathrm{d}t 
   + \mathrm{G}_t \ket{\psi_t}\mathrm{d}W_t\Big)
   \Big(\bra{\psi_t} \mathrm{F}_t^{\dagger}\mathrm{d}t + \bra{\psi_t} \mathrm{G}_t^\dagger \mathrm{d}W_t\Big)\\
&= \Big( \mathrm{F}_t \ket{\psi_t}\bra{\psi_t} 
   + \ket{\psi_t}\bra{\psi_t} \mathrm{F}_t^{\dagger} 
   + \mathrm{G}_t \ket{\psi_t}\bra{\psi_t} \mathrm{G}_t^{\dagger} \Big)\mathrm{d}t \\
&\quad + \Big( \mathrm{G}_t\ket{\psi_t}\bra{\psi_t} 
   + \ket{\psi_t}\bra{\psi_t} \mathrm{G}_t^{\dagger} \Big)\mathrm{d}W_t,
\end{align*}}
and,
{{\small} 
\begin{align*}
\mathrm{F}_t\ket{\psi_t}\bra{\psi_t} + \ket{\psi_t}\bra{\psi_t}\mathrm{F}_t^{\dagger} 
   &= -\mathrm{i}[\tilde{H},\ket{\psi_t}\bra{\psi_t}] 
   - \tfrac{1}{2}\{L^{\dagger} L,\ket{\psi_t}\bra{\psi_t}\} \\
   &\quad + \tfrac{1}{2}\langle L+L^{\dagger}\rangle_{\psi_t}\big(L\ket{\psi_t}\bra{\psi_t} 
   + \ket{\psi_t}\bra{\psi_t}L^{\dagger}\big) 
   - \tfrac{1}{4}\langle L+L^{\dagger}\rangle_{\psi_t}^2\ket{\psi_t}\bra{\psi_t},
\end{align*}}
where the commutator with $\tilde{H}$ comes from the Hamiltonian part of 
$\mathrm{F}_t$. Since $\tilde{H}$ is self-adjoint, we have
{{\small} \begin{align*}
(-\mathrm{i}\tilde{H})\ket{\psi_t}\bra{\psi_t} 
+ \ket{\psi_t}\bra{\psi_t}(\mathrm{i}\tilde{H})
= -\mathrm{i}[\tilde{H},\ket{\psi_t}\bra{\psi_t}],
\end{align*}}
and,
{{\small} 
\begin{align*}
\mathrm{G}_t\ket{\psi_t}\bra{\psi_t}\mathrm{G}_t^{\dagger} 
   &= L\ket{\psi_t}\bra{\psi_t}L^{\dagger} 
   - \tfrac{1}{2}\langle L+L^{\dagger}\rangle_{\psi_t}\big(L\ket{\psi_t}\bra{\psi_t} 
   + \ket{\psi_t}\bra{\psi_t}L^{\dagger}\big) \\
   &\quad + \tfrac{1}{4}\langle L+L^{\dagger}\rangle_{\psi_t}^2 \ket{\psi_t}\bra{\psi_t}.
\end{align*}}
Adding the two contributions gives
{{\small} \begin{align*}
\mathrm{F}_t\ket{\psi_t}\bra{\psi_t} 
   \!+\! \ket{\psi_t}\bra{\psi_t}\mathrm{F}_t^{\dagger} 
   \!+\! \mathrm{G}_t\ket{\psi_t}\bra{\psi_t}\mathrm{G}_t^{\dagger} 
\!&=\! -\mathrm{i}[\tilde{H},\ket{\psi_t}\bra{\psi_t}]
   \!-\! \tfrac{1}{2}\{L^{\dagger}L,\ket{\psi_t}\bra{\psi_t}\}
   \!+\! L\ket{\psi_t}\bra{\psi_t}L^{\dagger},
\end{align*}}
and for the diffusion term we have
{{\small} \begin{align*}
\mathrm{G}_t\ket{\psi_t}\bra{\psi_t} 
   + \ket{\psi_t}\bra{\psi_t} \mathrm{G}_t^{\dagger} 
   &= L\ket{\psi_t}\bra{\psi_t} + \ket{\psi_t}\bra{\psi_t}L^{\dagger} 
   - \langle L+L^{\dagger}\rangle_{\psi_t}\ket{\psi_t}\bra{\psi_t}.
\end{align*}}
Therefore, denoting $\rho_t = \ket{\psi_t}\bra{\psi_t}$, the process satisfies
{{\small} \begin{align*}
\mathrm{d}\rho_t
&= -\mathrm{i}[\tilde{H},\rho_t]\,\mathrm{d}t 
   + \Big(L\rho_t L^{\dagger} - \tfrac{1}{2}\{L^{\dagger}L,\rho_t\}\Big)\mathrm{d}t + \Big(L\rho_t + \rho_t L^{\dagger} - \langle L+L^{\dagger}\rangle_{\psi_t}\rho_t\Big)\mathrm{d}W_t,
\end{align*}}
where
{{\small} \begin{align*}
\langle L+L^{\dagger}\rangle_{\psi_t} = \tr\!\left((L+L^{\dagger})\rho_t\right).
\end{align*}}
This completes the proof.
\end{proof}}

\subsection{Graphon theory}
We give a brief summary of the notion of graphon theory relevant to our work, see \cite{lovasz12large} for further details. 

Let $I := [0,1]$. The set $I$ is an index set labeling a continuum of particles. The Lebesgue space over $I$ is denoted by $(I,\mathfrak{B}_I,\lambda_I)$ where $\lambda_{I}(\mathrm{d}u)  :=\mathrm{d}u $ is the Lebesgue measure. 
Denote by $\mathcal{W}$ the space of all bounded symmetric measurable functions $w : I^2 \rightarrow \mathbb{R}$. The elements of $\mathcal{W}$ will be called kernels. 

Let $\mathcal{W}_0$ be the subset of $\mathcal{W}$ such that $0 \leq w \leq 1$, where elements of $\mathcal{W}_0$ will be called graphons. 
{Intuitively, graphons generalize graphs to a continuum of vertices: $w(u,v)$ represents the weight of the edge between $u$ and $v$. These objects are ideal for studying convergent sequences of dense graphs.}

For a graphon $w$ we can define the operator $ \mathcal{T}_w : L^{\infty}_{}(I) \to L^1_{}(I)$ as follows: for any $ \phi \in L^{\infty}_{}(I)$,
$$ \mathcal{T}_{w}(\phi)(u) := \int_{I}^{}w(u,v)\phi(v)\mathrm{d}v.$$

The operator $\mathcal{T}_{w}$ can be viewed as an operator from $\mathcal{T}_{w} : L^{2}_{}(I) \to L^{2}_{}(I)$ then it is a Hilbert-Schmidt operator and the corresponding  theory for such operator can be applied.

We define some norm associated with kernel element $w \in \mathcal{W}$. The cut norm is defined by
$$ \|w\|_{\square} := \sup_{\mathrm{B}_1,\mathrm{B}_2 \in \mathfrak{B}_I}\Big|\int_{\mathrm{B}_{1}\times\mathrm{B}_{2}}w(u,v)\mathrm{d}u\mathrm{d}v\Big|,$$
where this optima is attained and it is equal to 
$$\sup_{\phi_{1},\phi_{2} : I \to I}\Big|\int_{I^2}\phi_{1}(u)\phi_{2}(v)w(u,v)\mathrm{d}u\mathrm{d}v\Big|.$$
The operator norm of $\mathcal{T}_{w}$ associated to kernel $w \in \mathcal{W}$ is given by
\begin{align*}
\|\mathcal{T}_{w}\|_{\text{op}} := \sup_{\|\phi\|_{} \leq 1}\int_{I}\Big|\int_{I}w(u,v)\phi(v)\mathrm{d}v\Big|\mathrm{d}u.
\end{align*}
The $L^1$-norm is defined as 
\begin{align*}
    \|w\|_{L^1} &:= {\int_{I}^{}\int_{I}^{}|w(u,v)|^{}\mathrm{d}v\mathrm{d}u}.
\end{align*}

%\end{definition}

According to \cite[lemma 8.11]{lovasz12large}, for every kernel $w \in \mathcal{W}$, the operator norm and the cut norm are equivalent, 
\begin{align}\label{lovasz}
\|w\|_{\square} \leq \|\mathcal{T}_{w}\|_{\text{op}} \leq 4\|w\|_{\square}.
\end{align}

%\begin{lemma}\cite[lemma 8.11]{lovasz12large}
%\label{lovasz}
%For every kernel $w \in \mathcal{W}$, we have equivalence between operator norm and cut norm, 
%$$\|w\|_{\square} \leq \|\mathcal{T}_{w}\|_{\text{op}} %\leq 4\|w\|_{\square}$$
%\end{lemma}

One can establish a natural relationship between the adjacency matrix of a graph and graphon function through the notion of a step kernel.

\begin{definition}[Step Kernel]
Given a finite graph $G_N = (V_{N},E_{N})$, where $V_N = \{\tfrac{1}{N},\dots,\tfrac{N}{N}\}$ with adjacency matrix $(\xi_{pq}^N)_{p, q \in V_N}$, the associated step kernel 
 $w^{G_N} : [0,1]^2 \rightarrow \mathbb{R}$ is defined by partitioning the unit interval $[0,1]$ into subintervals
  $I_p = \left(p-\frac{1}{N}, p\right]$ for $p \in V_N$, such that
{{\small} \begin{align*}
    w^{G_N}(u,v) = \xi_{pq}^N, \quad \text{for } (u,v) \in I_p \times I_q.
\end{align*}}
\end{definition}

This definition of a step kernel allows us to study the limiting behavior of a sequence of graphs using the cut metric. A sequence of dense graphs $(G_N)_{N \geq 0}$ is said to converge to a graphon $w$ if their associated step kernels converge to $w$ in the cut norm.

\subsection{Fubini extension theory}

To resolve the measurability difficulties that arises when constructing a continuum family of pairwise independents stochastic processes, the notion of Fubini extension was introduced in \cite{sun06exact}. Indeed, one may ask whether a pairwise independent family $ \Theta := \Big\{ \theta_{u} : u \in I \Big\} $ can  be constructed on a probability space $ (\Omega,\mathcal{F},\mathbb{Q}) $ such that the process $ \Theta \colon I\times\Omega \ni (u,\omega)\mapsto \Theta(u,\omega) = \theta_{u}(\omega) $ satisfies joint measurability property. To achieve this, such processes must be constructed on extensions of the standard product space $ (I\times\Omega,\mathfrak{B}_I\otimes\mathcal{F},\lambda_{\mathcal{I}}\otimes\mathbb{Q})$.

We follow a similar approach to \cite{aurell22sto,coppini25nonlinear,crucianelli24interacting,amini25brownian} to define stochastic processes on the appropriate Fubini extension space, see e.g., \cite[Section 3.7]{carmonadelarue18I} for an introduction to the formalism of Fubini extensions. {We begin by recalling an  important notion due to Sun \cite{sun98almost} which shows that all the notions of independence are, in fact, almost identical to their pairwise counterparts in an ideal setting of a continuum of random variables.}

\begin{definition}[Essential pairwise independence]
    Let $(I', \mathcal{I}', \lambda')$ and $(\Omega', \mathcal{F}', \mathbb{Q}')$ be two probability spaces, and let $E$ be a Banach space . A random variable $\Theta : I' \times \Omega' \to E$, is said to be \emph{essentially pairwise independent} (e.p.i.) if for $\lambda'$-almost every $u \in I'$, the random variables $\Theta(u,.)$ and $\Theta(v,.)$ are independent for $\lambda'$-almost every $v \in I'$. 
\end{definition}

We now turn to the notion of a Fubini extension, which provides the appropriate probability space for hosting a jointly measurable e.p.i. family. This extension enlarges the classical product space in such a way that both independence and Fubini’s theorem are preserved, and it serves as the foundational framework for all the results developed in this paper.

\begin{definition}[Fubini extension] Let $(I',\mathcal{I}',\lambda')$ and
$(\Omega',\mathcal{F}',\mathbb{Q}')$ be two probability spaces.
A \emph{triple} $(I'\times\Omega' ,\mathcal{V}',\mathcal{Q}')$ is called a
Fubini extension of the product space $(I'\times\Omega',\mathcal{I}'\otimes\mathcal{F}',\lambda'\otimes\mathbb{Q}')$ if for any Banach space $E$-valued $\mathcal{Q}'$-integrable function $g$ on $(I'\times \Omega',\mathcal{V}')$ :
\begin{itemize}
\item the functions $g_u : \omega \mapsto g(u,\omega)$ and $g_{\omega} : u \mapsto g(u,\omega)$ are integrable on $(\Omega', \mathcal{F}',\mathbb{Q}') $ for $\lambda'-$a.e. $u \in I'$, and on $(I',\mathcal{I}',\lambda')$ for $\mathbb{Q}'$-a.e. $\omega \in \Omega'$, respectively. 
\item the functions $u \mapsto \int_{\Omega'}g_u(\omega)\mathbb{Q}'(\mathrm{d}\omega)$ and $\omega \mapsto \int_{I'}g_{\omega}(u)\lambda'(\mathrm{d}u)$   are integrable, respectively, on $(I',\mathcal{I}',\lambda')$ and  $(\Omega', \mathcal{F}',\mathbb{Q}') $, with 
{{\small} \begin{align*}\int_{I'\times\Omega'}g(u,\omega)\mathcal{Q}'(\mathrm{d}u,\mathrm{d}\omega)  &= \int_{I'}\Bigl(\int_{\Omega'}g_u(\omega)\mathbb{Q}'(\mathrm{d}\omega)\Bigr)\lambda'(\mathrm{d}u)           =\int_{\Omega'}\Bigl(\int_{I'} g_{\omega}(u)\lambda'(\mathrm{d}u)\Bigr)\mathbb{Q}'(\mathrm{d}\omega).
\end{align*}}
\end{itemize}
\end{definition}

Although it is common in the literature to denote the Fubini extension $(I'\times \Omega', \mathcal{V}', \mathcal{Q}')$ by $(I' \times \Omega', \mathcal{I}' \boxtimes \mathcal{F}', \lambda' \boxtimes \mathbb{Q}')$, we  avoid this notation here for clarity.

The following result from \cite{sun09individual} guarantees the existence of Fubini extension space carrying a collection of e.p.i. jointly measurable random variables. 

\begin{theorem}{\cite[Theorem $1$]{sun09individual}}
There exists a probability space $(I,\mathcal{I},\lambda)$ extending $(I,\mathfrak{B}_{I},\lambda_{I})$, a probability space $(\Omega,\mathcal{F},\mathbb{Q})$, and a Fubini extension 
$ (I \times \Omega, \mathcal{V}, \mathcal{Q})
$ of the usual product space $
(I\times\Omega, \mathcal{I}\otimes\mathcal{F}, \lambda\otimes\mathbb{Q})$
such that for any measurable mapping 
$$
\varphi : I \to \mathrm{P}(S),
$$
where $S$ is a Polish space and $\mathrm{P}(S)$ denotes the set of Borel probability measures on $S$, there exists a $\mathcal{V}$-measurable process,
$$
\Theta : I\times\Omega \to S,
$$
such that the random variables $\Theta(u,.)$ are e.p.i. and 
$$
\mathbb{Q}\circ (\Theta({u},.)^{-1})=\varphi(u),
\quad\text{for all } u\in I.
$$
\end{theorem}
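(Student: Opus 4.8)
The plan is to prove this via Proposition 2.5 (Keisler--Sun) of \cite{sun06exact,sun09individual}, which provides an extension of the Lebesgue space $(I,\mathcal{B}_I,\lambda_I)$ and a Fubini extension carrying an e.p.i.\ jointly measurable process with \emph{any} prescribed family of marginals. The key conceptual point is that the abstract Keisler--Sun construction is stated for general Polish target spaces, so all that is needed here is to massage our desired object into that form. First I would fix the measurable mapping $\varphi\colon I\to\mathcal{P}(S)$ and appeal directly to the cited result to obtain $(I,\mathcal{I},\lambda)$, a probability space $(\Omega,\mathcal{F},\mathbb{Q})$, the Fubini extension $(I\times\Omega,\mathcal{I}\boxtimes\mathcal{F},\lambda\boxtimes\mathbb{Q})$, and the $\mathcal{I}\boxtimes\mathcal{F}$-measurable map $X\colon I\times\Omega\to S$ with e.p.i.\ sections and $\mathbb{Q}\circ(X^u)^{-1}=\varphi(u)$ for every $u$. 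The only subtlety is that the original statement in \cite{sun09individual} may produce these objects for \emph{almost every} $u$ rather than every $u$; this is repaired by redefining $X^u$ on the $\lambda$-null exceptional set, replacing it there by an independent copy drawn from $\varphi(u)$ on a product factor of $\Omega$, which preserves joint measurability and e.p.i.\ and upgrades the marginal identity to hold for all $u$.

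The main structural ingredient behind the cited theorem — and the part I would at least sketch if a self-contained argument were wanted — is the use of a \emph{rich} or \emph{nowhere-equivalent} extension $(I,\mathcal{I},\lambda)$ of the Lebesgue unit interval, typically obtained via a Loeb-measure construction over a hyperfinite index set, or equivalently via an appropriate ultraproduct. On such a space one can realize a continuum of genuinely independent $S$-valued random variables while retaining the Fubini-type interchange of integrals; the classical obstruction (that on $(I\times\Omega,\mathcal{B}_I\otimes\mathcal{F},\lambda_I\otimes\mathbb{Q})$ a jointly measurable process with e.p.i.\ nonconstant sections must be a.s.\ constant in $u$, by the Sun--Doob argument) is circumvented precisely because $\mathcal{I}\boxtimes\mathcal{F}$ is strictly larger than $\mathcal{B}_I\otimes\mathcal{F}$. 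I would then note that the process $X$ is constructed coordinatewise on the hyperfinite grid with the prescribed marginals $\varphi$, and the Fubini property is inherited from the Loeb product.

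For our application I would specialize $S$ to be the path space $\mathcal{C}([0,T],\mathbb{R})$ (or $\mathbb{R}$, taking $X$ to be a collection of random variables and building the Wiener processes fibrewise) and $\varphi(u)\equiv\mathcal{W}$, the Wiener measure, which is a Borel probability measure on a Polish space, so the hypotheses are met; this yields the family $\mathbf{W}=\{W^u:u\in[0,1]\}$ of pairwise independent Wiener processes with the required joint measurability in $(u,\omega,t)$. The step I expect to be the genuine obstacle — were one not permitted to cite \cite{sun09individual,sun06exact} as a black box — is establishing the Fubini interchange property for the extended product $\sigma$-algebra simultaneously with joint measurability of a highly non-trivial section family; this is exactly the technical heart of the Keisler--Sun theory and is why we invoke it rather than reprove it. Since the excerpt explicitly grants us the statement of \cite{sun09individual}, the proof here reduces to the citation together with the null-set repair described above.
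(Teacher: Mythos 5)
Your proposal is correct and matches the paper's treatment: the paper gives no proof of this theorem at all, stating it as a quoted result from \cite{sun09individual}, and your argument likewise reduces to invoking the Keisler--Sun construction (with a background sketch of the Loeb/hyperfinite product and the reason the ordinary product $\sigma$-algebra cannot work). The null-set repair you add is harmless but not needed, since the cited construction already prescribes the marginal $\varphi(u)$ for every $u\in I$, not merely almost every $u$.
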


Throughout this paper, we work on the space $(I,\mathcal{I},\lambda)$ for the index space.

%\medskip

Now we state an important result on the Fubini extension, the Exact Law of Large Numbers (ELLN), which allows us to replace the stochastic mean by a deterministic mean.

\begin{proposition}{\cite[Corollary 2.10]{sun06exact}}\label{ELLN}
Let $\Theta $ be an integrable random variable on $(I\times \Omega,\mathcal{V},\mathcal{Q})$. If $\Theta$ is essentially pairwise independent, then  the sample mean is the same as the mean of $\Theta$, 
\begin{align*}
\int_{I}\Theta(u,\tilde{\omega})\lambda(\mathrm{d}u) = \int_{I \times \Omega} \Theta(u,\omega)\mathcal{Q}(\mathrm{d}u,\mathrm{d}\omega), \;\; \forall \tilde{\omega} \in \Omega, \; \mathbb{Q}\text{-a.s} 
    \end{align*}
\end{proposition}

We next state a version of Girsanov’s theorem, adapted to a family of e.p.i.\ Wiener processes in the Fubini extension setting, as detailed in our companion paper~\cite{amini25brownian}.

\begin{proposition}{\cite[Theorem 4]{amini25brownian} }\label{girsanovlemma}
Let $(I \times \Omega, \mathcal{V}, \mathcal{Q})
$ be  a Fubini extension probability space. Consider a collection of e.p.i. Wiener processes $\mathbf{B} = \big\{B^{u} : u \in I \big\}$, and  a real valued process  $(\Theta_t)_{t \geq 0} = \Big\{(\theta_{t,u}^{})_{t \geq 0} : u \in I\Big\}$, 
such that for all $(t,u) \in [0,T]\times I$, $\theta_{t,u} $ is $\sigma(B_s^{u}; s\leq t)$-measurable.

Define the process $(\mathcal{E}_{t})_{t \geq 0} = \Big\{(\mathcal{E}_{t,u})_{t \geq 0} : u \in I\Big\}$ by 
{{\small} \begin{align*} \mathcal{E}_{t} := \exp\Big\{\int_{0}^{t}\Theta_{s}\mathrm{d}\mathbf{B}_s^{} - \frac{1}{2}\int_{0}^{t}\Theta_{s}^{2}\mathrm{d}s \Big\}.
\end{align*}}
If for all $u \in I$, the process $(\mathcal{E}_{t,u})_{t \geq 0}$ is a $\mathbb{Q}$-martingale, then  under the probability measure $\tilde{\mathcal{P}}$ defined by the density  $\mathcal{E}_T$ with respect to $\mathcal{Q}$,
the process $\tilde{\mathbf{B}} = \Big\{\tilde{{B}}^{u} : u \in I\Big\}$ is a Wiener process on the probability space $(I\times \Omega, \mathcal{V}, \tilde{\mathcal{P}}),$ where 
{{\small} \begin{align*}
\tilde{\mathbf{B}}_t &:= \mathbf{B}_t^{} - \int_{0}^{t} \Theta_{s}\mathrm{d}s.
\end{align*}}
\end{proposition}

\section{Non-exchangeable many-body observed quantum particles}\label{sec:particles}
We begin the analysis by fixing a configuration space $(\mathfrak{X},\Omega_{\mathfrak{X}},\mu)$, from which we construct the one-particle Hilbert space
$\mathbb{H} := L^2_{\mathbb{C}}(\mathfrak{X},\mu)$.

It is not a restriction to proceed in this way, because a  complex separable Hilbert space has the following isomorphism $\mathbb{H} \simeq l^2_{\mathbb{C}}(\mathbb{N}) \simeq L^{2}_{\mathbb{C}}(\mathbb{N},\delta),$ where $\delta$ is a counting measure. For example, if we consider the position of a quantum particle in space, then $\mathfrak
{X} = \mathbb{R}^d$ with $\mu$ being the Lebesgue measure. If instead we consider discrete energy levels for a quantum particle, we may set $\mathfrak{X} = \mathbb{N}$ or $\mathfrak{X} = \{1,\dots,d\}$, with $\mu $ as the counting measure. 

Consider $N$ quantum particles distributed over a graph $ G_N = (V_N, E_N) $, which is completely specified by its adjacency matrix $ (\xi_{pq}^{N})_{p, q \in V_N}$ indicating the intensity of interaction between two particles. The coefficients $\xi_{pq}^{N}$, issued from the step kernel function $w^{G_N}$, can be constructed in two ways:
\begin{itemize}
    \item $\xi_{pq}^{N} = {w}^{G_N}(p,q),$
    \item $\xi_{pq}^{N} = \xi_{qp}^{N}= \text{Bernoulli}({w}^{G_N}(p,q))$ independently for $p,q \in V_N.$
\end{itemize}

The associated Hilbert space is defined by the tensor product of the Hilbert space of each particle and is given by
\begin{align*}\mathbb{H}^{N} := \bigotimes_{ q \in V_N}\mathbb{H} &= L^2_{\mathbb{C}}(\mathfrak{X},\mu)\otimes\dots\otimes L^2_{\mathbb{C}}(\mathfrak{X},\mu)  \equiv L^2_{\mathbb{C}}(\mathfrak{X}^{N},\mu^{\otimes N}),
\end{align*}
so that the state of the whole system is represented by a density operator on $\mathbb{H}^{N}$, i.e., 
\begin{align*} \mathcal{S}(\mathbb{H}^{N}) &:= \Big\{ \boldsymbol{\rho} \in \mathcal{B}_1(\mathbb{H}^{N})\; \Big| \; \boldsymbol{\rho} \geq 0, \; \tr(\boldsymbol{\rho}) = 1,\; \boldsymbol{\rho} = \boldsymbol{\rho}^{\dagger} \Big\}. \end{align*}
Each particle is governed by a self-adjoint free Hamiltonian $\tilde{H}^{\dagger} = \tilde{H} \in \mathcal{B}(\mathbb{H}) $. The interaction between two particles is represented by an  operator $ A $ on $L^2_{\mathbb{C}}(\mathfrak{X}\times{\mathfrak{X}},\mu^{\otimes 2})$ of Hilbert-Schmidt type, with kernel $a$ satisfying the following proprieties:
\begin{align*}
\|a\|^{2}_{2} &= \int_{\mathfrak{X}^4}|a(x,y;x',y')|^{2}\mu(\mathrm{d}x)\mu(\mathrm{d}y)\mu(\mathrm{d}x')\mu(\mathrm{d}y') < \infty,\\
a(x,y;x',y') &= a(y,x;y',x'), \qquad a(x,y;x',y') = \overline{a(x',y';x,y)},
\end{align*}
\begin{align*} 
A& : L^2_{\mathbb{C}}(\mathfrak{X}\times\mathfrak{X},\mu^{\otimes 2}) \to
L^2_{\mathbb{C}}(\mathfrak{X}\times{\mathfrak{X}},\mu^{\otimes 2}),\\
  Af(x,y) &:= \int_{\mathfrak{X}^2}a(x,y;x',y')f(x',y')\mu(\mathrm{d}x')\mu(\mathrm{d}y'), \; \forall f \in L^2_{\mathbb{C}}(\mathfrak{X}^2,\mu^{\otimes 2}).
\end{align*}

At the same time, each particle is observed through a quantum channel $L \in \mathcal{B}_{}(\mathbb{H})$ with detection efficiency $\eta \in (0,1]$. Then the dynamics of a  posteriori state is given by the $N$-Body Belavkin, with a family $\mathbf{W}^{N} = \{ W^{q} , \; q \in V_N\}$ of pairwise  independent   Wiener processes
{{\small} \begin{align}\label{Nbelavkin}
\mathrm{d}\boldsymbol{\rho}_t^{N} &= -\mathrm{i}{[\mathbf{H}^{N},\boldsymbol{\rho}_{t}^{N}]}\mathrm{d}t
+ \sum_{q \in V_{N}} \Big({\bf L}_q\boldsymbol{\rho}_{t}^{N}{\bf L}^{\dagger}_{q} - \frac{1}{2}\big\{{\bf L}^{\dagger}_{q}{\bf L}_{q},\boldsymbol{\rho}_{t}^{N}\big\}\Big)\mathrm{d}t\nonumber\\
&\quad +\sqrt{\eta}\sum_{q \in V_{N}}\Big({\bf L}_q\boldsymbol{\rho}_{t}^{N} + \boldsymbol{\rho}_{t}^{N}{\bf L}^{\dagger}_q - \tr\big(({\bf L}_q + {\bf L}_q^{\dagger})\boldsymbol{\rho}_{t}^{N}\big)\boldsymbol{\rho}_{t}^{N}\Big)\mathrm{d}W_t^{q},\\
\boldsymbol{\rho}_{0}^{N} &= \bigotimes_{{V_N}}\rho_{0}, \;\; \rho_{0} \in \mathcal{S}(\mathbb{H}),\nonumber
\end{align}}
where
{{\small} \begin{align*}
    \mathbf{H}^{N} &:= \sum_{q \in V_N}\mathbf{\tilde{H}}_{q} + \frac{1}{N}\sum_{q \in V_N}\sum_{p > q} \xi^{N}_{pq}\mathbf{A}_{pq}
\end{align*}}
and the corresponding observation process for particle $q \in V_N$ is given by the following stochastic process 
{\begin{align*}
\mathrm{d}Y_t^{q} &= \mathrm{d}W_t^{q} + \sqrt{\eta}\,\tr\big( ({\bf L}_q + {\bf L}_q^{\dagger})\boldsymbol{\rho}_{t}^{N}\big)\mathrm{d}t, \quad q \in V_N.
\end{align*}}

\section{Mean-field graphon quantum filtering systems}\label{sec:MF}

The idea behind mean-field limit, is that as $N \to \infty $, the interaction weakens in the sense that the influence of each particle, tends to zero. The system's description is then given by a tensoriel product state. The justification for the preservation of mean-field dynamics is provided by the notion of quantum propagation of chaos, which essentially states that the system remains approximately in a product state if it starts in a product state.  This can be summarized as follows: 
\begin{align*}
\boldsymbol{\rho}_{0}^{N} = \rho_{0}^{\otimes N} \xrightarrow[N \to \infty]{\text{Propagation of chaos}}\boldsymbol{\rho}_{t}^{N}  \approx \bigotimes_{u \in I}\gamma_{t,u}^{}, 
\end{align*}
where $\gamma_{t,u}$ is the density operator of the $u \in I$-th particle at time $t$. 

Thus, the dynamics of the system in the mean-field  limit are given by the following
graphon quantum mean-field Belavkin system of equation driven by a familly $\mathbf{W} = \{ W^{u} : u \in I\}$ of e.p.i. Wiener processes on Fubini extended space $(I\times\Omega,\mathcal{V},\mathcal{P})$ of the usual product space  $(I\times\Omega,\mathcal{I}\otimes\mathcal{F},\lambda\otimes\mathbb{P})$: 
{{\small} \begin{align}\label{graphonbelavkin}
\mathrm{d}\gamma_{t,{u}} &= -\mathrm{i}\Big[\tilde{H} + \int_{0}^{1}{w(u,v)}{A^{\mathbb{E}_{\mathbb{P}}[\gamma_{t,{v}}]}}\mathrm{d}v, \gamma_{t,{u}} \Big]\mathrm{d}t + \Big(L\gamma_{t,{u}}L^{\dagger} - \frac{1}{2}\{L^{\dagger}L, \gamma_{t,{u}}\}\Big)\mathrm{d}t\nonumber\\
&\quad\quad\quad\quad + \sqrt{\eta}\Big(L\gamma_{t,{u}} + \gamma_{t,{u}}L^{\dagger} - \tr\big((L+L^{\dagger})\gamma_{t,{u}}\big)\gamma_{t,{u}}\Big)\mathrm{d}W_t^{u},\\
\gamma_{0,u} &=  \rho_{0} \in \mathcal{S}(\mathbb{H}),\nonumber
\end{align}}
where the mean-field operator $A^{\bullet}$ is a linear operator:
{{\small} \begin{align} \label{meanfield}
\quad A^{\bullet} &: L^{2}_{\mathbb{C}}(\mathfrak{X}\times\mathfrak{X},\mu^{\otimes 2}) \to L^{2}_{\mathbb{C}}(\mathfrak{X}\times\mathfrak{X},\mu^{\otimes 2}),\nonumber\\
A^{\rho}(x,x') &= \int_{\mathfrak{X}^{2}}a(x,y;x',y')\overline{\rho(y,y')}\mu(\mathrm{d}y)\mu(\mathrm{d}y'), \; \forall \rho \in  L^{2}_{\mathbb{C}}(\mathfrak{X}\times\mathfrak{X},\mu^{\otimes 2}).
\end{align}}

The associated observation process for the particle $u \in I$ is
{{\small} \begin{align*}\mathrm{d}Y_t^{u} = \mathrm{d}W_t^{u} + \sqrt{\eta}\tr\big((L+L^{\dagger})\gamma_{t,u}\big)\mathrm{d}t.
\end{align*}}

An important estimate between the mean-field operator and the pairwise operator is established in \cite[Lemma 5.1]{kolokoltsov22qmfg}.

\begin{lemma}[\cite{kolokoltsov22qmfg}]\label{mfoperatornorm}
Let $A$ be a measurable complex-valued  Hilbert-Schmidt operator on $\mathfrak{X}^2$ with associated kernel $a: \mathfrak{X}^4\mapsto \mathbb{C}$  and $\rho: \mathfrak{X}^2 \mapsto\mathbb{C}$ be a complex-valued function. Then for $A^{\rho}$ as defined in \eqref{meanfield}, we have  
{{\small} \begin{align*}
\big\|A^{\rho}\big\|_{2} \leq \big\|{\rho}\big\|_{2} \big\|a\big\|_{2}.
\end{align*}}
\end{lemma}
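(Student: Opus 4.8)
The plan is to obtain the bound directly from the Cauchy–Schwarz inequality together with Tonelli's theorem; no fixed-point or approximation machinery is needed. First I would fix conventions: here $\|A^{\rho}\|_2$ denotes the $L^2_{\mathbb{C}}(\mathfrak{X}\times\mathfrak{X},\mu^{\otimes 2})$-norm of the function $(x,y)\mapsto A^{\rho}(x,y)$, $\|\rho\|_2$ is the $L^2$-norm of $\rho$ on $\mathfrak{X}^2$, and $\|a\|_2$ is the Hilbert–Schmidt norm of $A$, i.e. the $L^2$-norm of $a$ on $\mathfrak{X}^4$ as introduced in Section~\ref{sec:particles}. Since $a\in L^2(\mathfrak{X}^4)$, for $\mu^{\otimes 2}$-a.e.\ $(x,y)$ the section $(x',y')\mapsto a(x,y;x',y')$ belongs to $L^2(\mathfrak{X}^2)$, so the integral defining $A^{\rho}(x,y)$ converges absolutely and $A^{\rho}$ is a well-defined measurable function.

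Then, for such a point $(x,y)$, I would apply Cauchy–Schwarz in $L^2(\mathfrak{X}\times\mathfrak{X},\mu^{\otimes 2})$ to the pair $a(x,y;\cdot,\cdot)$ and $\overline{\rho(\cdot,\cdot)}$:
\[
|A^{\rho}(x,y)|
\le \Bigl(\int_{\mathfrak{X}^2}|a(x,y;x',y')|^2\,\mu(\mathrm{d}x')\mu(\mathrm{d}y')\Bigr)^{1/2}\|\rho\|_2 .
\]
Squaring, integrating over $(x,y)\in\mathfrak{X}^2$, and using Tonelli's theorem to merge the iterated integral of the nonnegative function $|a|^2$ into a single integral over $\mathfrak{X}^4$ gives
\[
\|A^{\rho}\|_2^2 \le \|\rho\|_2^2\int_{\mathfrak{X}^4}|a(x,y;x',y')|^2\,\mu(\mathrm{d}x)\mu(\mathrm{d}y)\mu(\mathrm{d}x')\mu(\mathrm{d}y') = \|\rho\|_2^2\,\|a\|_2^2 ,
\]
and taking square roots yields $\|A^{\rho}\|_2\le\|\rho\|_2\,\|a\|_2$.

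There is no real obstacle here: the only points deserving a line of justification are the measurability and a.e.\ finiteness of $A^{\rho}$ and the interchange of integrals, both immediate from $a\in L^2(\mathfrak{X}^4)$, $\rho\in L^2(\mathfrak{X}^2)$ and Tonelli applied to $|a|^2\ge 0$. Equivalently, one may read the statement as the classical fact that a Hilbert–Schmidt integral operator with kernel $a$ maps $L^2(\mathfrak{X}^2)$ into itself with operator norm at most $\|a\|_2$, applied to the vector $\overline{\rho}$; I would mention this reformulation only as a remark, keeping the Cauchy–Schwarz computation above as the self-contained proof.
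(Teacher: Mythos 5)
Your proof is correct: the pointwise Cauchy--Schwarz estimate in the $(x',y')$ variables followed by Tonelli is the standard argument, and your identification of $\big\|A^{\rho}\big\|_{2}$ with the $L^{2}_{\mathbb{C}}(\mathfrak{X}\times\mathfrak{X},\mu^{\otimes 2})$-norm of the kernel defined in \eqref{meanfield} is exactly how the quantity is used in the paper. The paper itself gives no proof of this lemma but imports it from \cite{kolokoltsov22qmfg}, where the argument is the same Cauchy--Schwarz computation, so your proposal coincides with the intended one.
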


\begin{remark}
When the interaction is homogeneous, i.e., $w(u,v) = 1$ for all $(u,v) \in I\times I$, the stochastic system reduces to the standard mean-field Belavkin equation studied in \cite{kolokoltsov22qmfg,kolokoltsov21law}:
\begin{align*}
\mathrm{d}\gamma_{t,u} &= -\mathrm{i}\Big[\tilde{H} + A^{\mathbb{E}_{\mathbb{P}}[\gamma_{t,u}]}, \gamma_{t,u} \Big]\mathrm{d}t + \Big(L\gamma_{t,u}L^{\dagger} - \frac{1}{2}\{L^{\dagger}L, \gamma_{t,u}\}\Big)\mathrm{d}t\\
&\quad\quad\quad\quad + \sqrt{\eta}\Big(L\gamma_{t,u} + \gamma_{t,u}L^{\dagger} - \tr\big((L+L^{\dagger})\gamma_{t,u}\big)\gamma_{t,u}\Big)\mathrm{d}W_t^{u}, \quad \quad \quad u \in [0,1].
\end{align*}
\end{remark}

By averaging over trajectories, that is, $m_{t,u} = \mathbb{E}_{\mathbb{P}}[\gamma_{t,u}],$ we obtain a new nonlinear Lindblad system of equations given by
{{\small} \begin{align*}
\mathrm{d}m_{t,u} &= -\mathrm{i}\Big[\tilde{H} + \int_{0}^{1}{w}(u,v)A^{m_{t,v}}\mathrm{d}v, m_{t,u} \Big]\mathrm{d}t + \Big(Lm_{t,u}L^{\dagger} - \frac{1}{2}\{L^{\dagger}L, m_{t,u}\}\Big)\mathrm{d}t, \; \forall u \in I.
\end{align*}}
This can be recovered equally in the absence of measurement, i.e., when $\eta \to 0$. In this case, the observation process $(Y_t^u)_{t \geq 0}$ reduces to pure noise, namely $(Y_t^u)_{t \geq 0} = (W_t^u)_{t \geq 0}$. 

Note also that the processes $(\gamma_{t,u})_{t \geq 0}$ in \eqref{graphonbelavkin} are independent but not identically
 distributed non-linear diffusions.

\begin{remark}[Formulation  in the Fubini extended space]\label{rem:formulation}

Before stating the first theorem, it is useful to point out that the graphon SDEs \eqref{graphonbelavkin}
can be formulated as a single SDE defined on the Fubini extension space, driven by a standard Wiener process (see \cite{amini25brownian} for details and  \cite{coppini25nonlinear} for an $L^2$--formulation approach). 

Let 
{{\small} \begin{align*}
\mathcal{H} := L^{2}_{\mathcal{L}_2(\mathbb{H})}(I\times\Omega,\mathcal{P}), 
\qquad  
\mathcal{H}_I := L^{2}_{\mathcal{L}_2(\mathbb{H})}(I,\lambda).
\end{align*}}
The collection $\{\gamma_{t,u} : u\in I\}$ can equivalently be represented by
$$
\Gamma_t \in \mathcal{H}, 
\qquad \Gamma_t(u,\omega) := \gamma_{t,u}(\omega).
$$
Define
$$
\mathrm{M}_t := \mathbb{E}_{\mathbb{P}}[\Gamma_t] \in \mathcal{H}_I,
$$
so that $\mathrm{M}_t(u) = \mathbb{E}_{\mathbb{P}}[\gamma_{t,u}]$ for each $u\in I$.

Given the Hilbert--Schmidt interaction operator $A^{\bullet}$, the linear operator of interaction $\mathbb{W}_{A}$ is defined by
{{\small} \begin{align*}
\mathbb{W}_A : &\; \mathcal{H}_I \longrightarrow \mathcal{H}_I\\
\big(\mathbb{W}_A[\mathrm{G}]\big)(u) &:= \int_I w(u,v)A^{\mathrm{G}(v)}\mathrm{d}v,
\qquad \forall \mathrm{G} \in \mathcal{H}_I, \; u\in I.
\end{align*}}
%\medskip

With these notations, the system \eqref{graphonbelavkin} can be written as the Hilbert-space valued stochastic equation
\begin{align}\label{compactbelavkin}
\mathrm{d}\Gamma_t = \mathcal{D}\big[\Gamma_t,\mathbb{W}_A(\mathrm{M}_t)\big]\mathrm{d}t
+ \mathcal{K}[\Gamma_t]\mathrm{d}\mathbf{W}_t,
\end{align}
where:
\begin{itemize}
  \item $\mathcal{D}[.,.]$ collects the drift terms:
{{\small} \begin{align*}
    \mathcal{D}[\Gamma,\Gamma'] 
    = -\mathrm{i}[\tilde{H}+\Gamma',\Gamma] 
      + L\Gamma L^{\dagger} - \tfrac{1}{2}\{L^{\dagger}L,\Gamma\},
\end{align*}}
  \item $\mathcal{K}[.]$ denotes the diffusion part:
{{\small} \begin{align*}
    \mathcal{K}[\Gamma]
    = \sqrt{\eta}\,\big(L\Gamma + \Gamma L^{\dagger}-\tr((L+L^{\dagger})\Gamma)\Gamma\big).
\end{align*}}
\end{itemize}
\end{remark}

The following theorem establishes the well-posedness of the system \eqref{graphonbelavkin}.

\begin{theorem}[Well-posedness]
Let $T > 0$, $\tilde{H} \in \mathcal{B}(\mathbb{H})$ be self-adjoint, $L \in \mathcal{B}_{2}(\mathbb{H})$ and let $A^{\bullet}$ be defined as in \eqref{meanfield}.
Under the Fubini extension probability space $(I\times\Omega ,\mathcal{V}, \mathcal{P})$ with a family of essentially pairwise independents Wiener processes  
$\mathbf{W} = \big\{W^{u} : u \in I \big\}$, the system \eqref{graphonbelavkin} has a unique strong solution and for every $(t,u) \in [0,T]\times I$, it holds that $\gamma_{t,u} \in \mathcal{S}(\mathbb{H}).$

Moreover, let $(\tilde{\gamma}_t)_{t \geq 0}$ and $(\hat{\gamma}_t)_{t \geq 0}$ be two solutions of \eqref{graphonbelavkin} associated to graphon $\tilde{w}$ and $\hat{w}$, respectively, 
 and with the same initial condition. Then there exists  some constant $C := C(T,\eta, \|\tilde{H}\|,\|a\|_{2},\|L\|_{2})$ such that 
$$ \sup_{t \in [0,T]}\mathbb{E}_{\mathbb{P}}\Big[\int_{I}\Big\|\tilde{\gamma}_{t,u} -\hat{\gamma}_{t,u}\Big\|^{2}_{2}\mathrm{d}u\Big] \leq C\|\tilde{w} - \hat{w}\|_{L^1}^{}. $$
\end{theorem}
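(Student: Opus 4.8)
The plan is to treat \eqref{graphonbelavkin} as a McKean--Vlasov equation valued in the Liouville space $\mathcal{L}_{2}(\mathbb{H})$ and proceed by a freeze-and-fixed-point argument, following the scheme announced in the introduction. First I would fix a deterministic measurable flow $(m_{t,u})$ in $\mathcal{L}_{2}(\mathbb{H})$ with $\sup_{t}\int_{I}\|m_{t,u}\|_{2}^{2}\,\mathrm{d}u<\infty$ and set $A^{m}_{t,u}:=\int_{I}w(u,v)A^{m_{t,v}}\,\mathrm{d}v$; by Lemma~\ref{mfoperatornorm} and $0\le w\le1$ one has $\|A^{m}_{t,u}\|_{2}\le\|a\|_{2}\int_{I}\|m_{t,v}\|_{2}\,\mathrm{d}v<\infty$, so this is a bounded measurable coefficient. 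Then one considers the \emph{linear} (Belavkin--Zakai) equation for the unnormalised state,
\[
\mathrm{d}\sigma_{t,u}=-\mathrm{i}[\tilde{H}+A^{m}_{t,u},\sigma_{t,u}]\,\mathrm{d}t+\Bigl(L\sigma_{t,u}L^{\dagger}-\tfrac12\{L^{\dagger}L,\sigma_{t,u}\}\Bigr)\mathrm{d}t+\sqrt{\eta}\,(L\sigma_{t,u}+\sigma_{t,u}L^{\dagger})\,\mathrm{d}W^{u}_{t},\quad\sigma_{0,u}=\rho_{0},
\]
which, since $\tilde{H},L\in\mathcal{B}(\mathbb{H})$, is a linear SDE with bounded coefficients on the Hilbert space $\mathcal{L}_{2}(\mathbb{H})$ and so admits a unique strong solution with moments of all orders, following the Hilbert-space formulation of \cite{coppini24nonlinear}. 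Taking the trace kills the commutator and the Lindblad drift, leaving $\mathrm{d}\,\mathrm{tr}\,\sigma_{t,u}=\sqrt{\eta}\,\mathrm{tr}((L+L^{\dagger})\sigma_{t,u})\,\mathrm{d}W^{u}_{t}$; positivity preservation for $\sigma_{t,u}$ — obtained exactly as in \cite{mirrahimiHandel07} via an Euler--Maruyama / completely-positive approximation in $\mathcal{L}_{2}(\mathbb{H})$ — then makes $\mathrm{tr}\,\sigma_{t,u}$ a nonnegative martingale, and boundedness of $L$ yields the exponential moment bound needed to ensure it is a true martingale that is a.s.\ strictly positive. Setting $\gamma_{t,u}:=\sigma_{t,u}/\mathrm{tr}\,\sigma_{t,u}$ and defining $\mathbb{P}^{u}$ on $\Omega$ with density $\mathrm{tr}\,\sigma_{T,u}$, an Itô computation shows $\tilde{W}^{u}_{t}:=W^{u}_{t}-\sqrt{\eta}\int_{0}^{t}\mathrm{tr}((L+L^{\dagger})\gamma_{s,u})\,\mathrm{d}s$ is a $\mathbb{P}^{u}$-Wiener process and that $\gamma_{\cdot,u}$ solves the frozen nonlinear equation; by construction $\gamma_{t,u}\in\mathcal{S}(\mathbb{H})$, hence $\|\gamma_{t,u}\|_{2}\le\|\gamma_{t,u}\|_{1}=1$.

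\textbf{Fixed point and strong solution.} Next I would define $\Phi(m)_{t,u}:=\mathbb{E}^{\mathbb{P}^{u}}[\gamma_{t,u}]$ on the space of deterministic flows with norm $\|m\|_{t}^{2}:=\sup_{s\le t}\int_{I}\|m_{s,u}\|_{2}^{2}\,\mathrm{d}u$, measurability in $u$ being handled on the Fubini extension. Applying Itô to $\|\gamma^{1}_{t,u}-\gamma^{2}_{t,u}\|_{2}^{2}$ for two inputs $m^{1},m^{2}$, and using the operator-ideal bound $\|[B,\gamma]\|_{2}\le2\|B\|_{2}\|\gamma\|_{\mathrm{op}}\le2\|B\|_{2}$, the Lipschitz property of the Lindblad term and (on the bounded set $\mathcal{S}(\mathbb{H})$) of the nonlinear diffusion coefficient, Lemma~\ref{mfoperatornorm}, $0\le w\le1$, and Cauchy--Schwarz on $I$, one gets $\sup_{s\le t}\int_{I}\mathbb{E}\|\gamma^{1}_{s,u}-\gamma^{2}_{s,u}\|_{2}^{2}\,\mathrm{d}u\le C\int_{0}^{t}\|m^{1}-m^{2}\|_{s}^{2}\,\mathrm{d}s$, and by Jensen the same bound holds for $\|\Phi(m^{1})-\Phi(m^{2})\|_{t}^{2}$; iterating gives a contraction in a time-weighted norm, hence a unique fixed point $m$. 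Plugging $m$ back in, pathwise uniqueness for the resulting nonlinear SDE (again Itô on $\|\gamma^{1}-\gamma^{2}\|_{2}^{2}$ plus Gronwall, using the local Lipschitz bounds valid on $\mathcal{S}(\mathbb{H})$), together with the weak existence above and the Hilbert-space Yamada--Watanabe theorem, upgrades the weak solution to the unique strong solution, with $\gamma_{t,u}\in\mathcal{S}(\mathbb{H})$ already in hand.

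\textbf{Graphon stability.} Let $\tilde{\gamma},\hat{\gamma}$ solve \eqref{graphonbelavkin} with graphons $\tilde{w},\hat{w}$, the same $\rho_{0}$, and the same noise. Apply Itô to $\|\tilde{\gamma}_{t,u}-\hat{\gamma}_{t,u}\|_{2}^{2}$ and split the difference of mean-field terms as
\[
\int_{I}\tilde{w}(u,v)A^{\mathbb{E}[\tilde{\gamma}_{t,v}]}\mathrm{d}v-\int_{I}\hat{w}(u,v)A^{\mathbb{E}[\hat{\gamma}_{t,v}]}\mathrm{d}v=\int_{I}(\tilde{w}-\hat{w})(u,v)A^{\mathbb{E}[\tilde{\gamma}_{t,v}]}\mathrm{d}v+\int_{I}\hat{w}(u,v)\bigl(A^{\mathbb{E}[\tilde{\gamma}_{t,v}]}-A^{\mathbb{E}[\hat{\gamma}_{t,v}]}\bigr)\mathrm{d}v=:\Delta^{1}_{t,u}+\Delta^{2}_{t,u}.
\]
Since $\|\mathbb{E}[\tilde{\gamma}_{t,v}]\|_{2}\le1$, Lemma~\ref{mfoperatornorm} gives $\|\Delta^{1}_{t,u}\|_{2}\le\|a\|_{2}\int_{I}|\tilde{w}-\hat{w}|(u,v)\,\mathrm{d}v$, and $\hat{w}\le1$ gives $\|\Delta^{2}_{t,u}\|_{2}\le\|a\|_{2}\int_{I}\mathbb{E}\|\tilde{\gamma}_{t,v}-\hat{\gamma}_{t,v}\|_{2}\,\mathrm{d}v$; all remaining contributions (the commutator with the common part $\tilde{H}+\int_{I}\hat{w}A^{\mathbb{E}[\hat{\gamma}]}$, the Lindblad terms, the Itô correction) are bounded by $C\|\tilde{\gamma}_{t,u}-\hat{\gamma}_{t,u}\|_{2}^{2}$, and the martingale term vanishes in expectation after localisation. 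Writing $\phi(t):=\mathbb{E}\int_{I}\|\tilde{\gamma}_{t,u}-\hat{\gamma}_{t,u}\|_{2}^{2}\,\mathrm{d}u$, Young's inequality, Jensen on $I$, Fubini on the extension, and the crucial elementary bound $\|\tilde{w}-\hat{w}\|_{L^{2}}^{2}\le\|\tilde{w}-\hat{w}\|_{L^{1}}$ valid because $|\tilde{w}-\hat{w}|\le1$ for $[0,1]$-valued graphons, lead to $\phi(t)\le C\int_{0}^{t}\phi(s)\,\mathrm{d}s+C\|a\|_{2}^{2}\,T\,\|\tilde{w}-\hat{w}\|_{L^{1}}$, and Gronwall's lemma yields $\sup_{t\le T}\phi(t)\le C(T,\eta,\|\tilde{H}\|,\|a\|_{2},\|L\|_{2})\,\|\tilde{w}-\hat{w}\|_{L^{1}}$, which is the claim.

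\textbf{Main obstacle.} The stability estimate of the last step is essentially routine once the a priori bound $\|\gamma_{t,u}\|_{2}\le1$ and the inequality $\|\tilde{w}-\hat{w}\|_{L^{2}}^{2}\le\|\tilde{w}-\hat{w}\|_{L^{1}}$ are in place. I expect the genuinely delicate points to be, first, the infinite-dimensional positivity preservation for the linear equation together with the verification that $\mathrm{tr}\,\sigma_{t,u}$ is a strictly positive \emph{true} martingale — this is what legitimises both the normalisation $\gamma=\sigma/\mathrm{tr}\,\sigma$ and the Girsanov change of measure — and second, the measurability bookkeeping on the Fubini extension: joint $(u,\omega)$-measurability of $\mathbf{W}$ and of $(\gamma_{t,u})$, the validity of the exchanges $\int_{I}\mathbb{E}=\mathbb{E}\int_{I}$ used throughout, and the Hilbert-space form of Yamada--Watanabe in this setting.
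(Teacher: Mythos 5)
Your proposal is correct and follows essentially the same route as the paper: parametrize the law dependence by a deterministic flow, solve the associated linear (Zakai-type) equation in $\mathcal{L}_{2}(\mathbb{H})$ with positivity preserved as in \cite{mirrahimiHandel07}, normalise and pass to the nonlinear frozen equation via Girsanov, combine pathwise uniqueness with the Hilbert-space Yamada--Watanabe theorem to obtain a strong solution, close the McKean--Vlasov loop by a Picard/contraction argument, and prove graphon stability by an It\^o--Gr\"onwall estimate using Lemma~\ref{mfoperatornorm}, the a priori bound $\|\gamma_{t,u}\|_{2}\leq 1$, and the boundedness of graphons to pass from an $L^{2}$ to the $L^{1}$ distance. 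The only differences (an $L^{2}(I)$-integrated norm instead of $\sup_{u\in I}$ in the fixed-point space, and the Girsanov density written as $\mathrm{tr}\,\sigma_{T,u}$ rather than the explicit exponential martingale) are cosmetic.
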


\begin{proof} The system \eqref{graphonbelavkin} can be seen as a type of McKean-Vlasov interacting stochastic differential equation. The proof is structured in five steps: 

\begin{enumerate}
\item \textit{Parametrization:}
Set $\mathcal{C}_{I} := \mathcal{C}\big([0,T]\times I,\mathcal{S}(\mathbb{H})^{}\big)$, and for each $\xi := (\xi^{u})_{u \in I} \in \mathcal{C}_{I}$ equip it with the norm 
$$ \|\xi\|_{\star,T} := \sup_{t \in [0,T]}\sup_{u \in I}\|\xi^{u}_{t}\|_{2}.$$
We formally replace the dependency on law by  parameterization and define the  mapping $\Xi:\mathcal{C}_{I}\to \mathcal{C}_{I}$ by $\Xi(\xi) := (\Xi_{u}(\xi))_{u \in I},$
where $\Xi_{u}(\xi):=(\mathbb{E}_{\mathbb{P}}[\gamma_{t,u}^{\xi}])_{0 \leq t \leq T}$ with
{{\small} \begin{align}
\mathrm{d}\gamma^{\xi}_{t,u} &= -\mathrm{i}\left[ \tilde{H} + \int_{0}^{1} w(u,v) A^{\xi_{t}^{v}} \mathrm{d}v, \gamma^{\xi}_{t,u} \right] \mathrm{d}t + \left( L \gamma^{\xi}_{t,u} L^{\dagger} - \frac{1}{2} \{ L^{\dagger} L, \gamma^{\xi}_{t,u} \} \right) \mathrm{d}t \nonumber \\
&\quad\quad + \sqrt{\eta} \left( L \gamma^{\xi}_{t,u} + \gamma^{\xi}_{t,u} L^{\dagger} - \tr\left( (L + L^{\dagger}) \gamma^{\xi}_{t,u} \right) \gamma^{\xi}_{t,u} \right) \mathrm{d}W_{t}^{u}, \quad u \in I. \label{graphonpara}
\end{align}}

\item \textit{Linearization and positivity:} 
We linearize the parametrized system by expressing its solution as a function of the solutions of a suitable linear SDE. Consider the following  $\xi$-parametrized linear system:
\begin{align} \mathrm{d}\vartheta^{\xi}_{t,{u}} &= -\mathrm{i}\Big[\tilde{H} + \int_{0}^{1}w(u,v)A^{\xi^{v}_{t}}\mathrm{d}v, \vartheta^{\xi}_{t,{u}} \Big]\mathrm{d}t + \Big(L\vartheta^{\xi}_{t,{u}}L^{\dagger} - \frac{1}{2}\{L^{\dagger}L, \vartheta^{\xi}_{t,{u}}\}\Big)\mathrm{d}t\nonumber\\
&\quad\quad\quad + \sqrt{\eta}\Big(L\vartheta^{\xi}_{t,{u}} + \vartheta^{\xi}_{t,{u}}L^{\dagger}\Big)\mathrm{d}Y_t^{u}, \; u \in I,\label{lineargraphonpara} \end{align}
where $\{ Y^{u} : u \in I \} $ is a collection of independent Wiener processes on the following auxiliary filtered probability space $(I\times\Omega, \mathcal{V}, \mathcal{Q}).$

From Proposition \ref{linearwellposdness} in Appendix~\ref{sec:appB}, this system admits a strong solution in $\mathcal{L}_{2}(\mathbb{H})^{}$ with positivity preserved for every $u \in I$.

Define  $\gamma_{t,u}^{\xi} := \frac{\vartheta_{t,u}^{\xi}}{\tr(\vartheta_{t,u}^{\xi})}.$ By using Proposition~\eqref{lineartononlinear} and by Itô formula, this corresponds to a weak solution of Equation \eqref{graphonpara}. Note that $\gamma_{t,u}^{\xi}$ is positive and trace-one preserving as soon as $\tr(\gamma_{0,u}^{\xi})=1$.

\item \textit{Pathwise uniqueness step:}
In order to apply the fixed point argument, we need to define a strong solution for any $\xi \in \mathcal{C}_I$ to \eqref{graphonpara}. Since the Liouville space is a Hilbert space, weak existence and pathwise 
uniqueness imply the existence of a unique strong solution by the Hilbert-space version of the Yamada–Watanabe theorem. Consider two  solutions $\gamma^{\xi,a}, \gamma^{\xi,b}$ defined on the same probability space $(I\times\Omega,\mathcal{V}, \mathcal{P}) $ with the same initial state in $\mathcal{L}_2(\mathbb{H})$, and set $K:=L + L^{\dagger}$. Then,
{{\small} \begin{align*}
\Big\|\gamma_{t,u}^{\xi,a} - \gamma_{t,u}^{\xi,b}\Big\|_{2}^{2} &= \Big\|\int_{0}^{t}-\mathrm{i}\big[\tilde{H} + \int_{0}^{1}w(u,v)A^{\xi_t^{v}}\mathrm{d}v, (\gamma_{s,u}^{\xi,a} - \gamma_{s,u}^{\xi,b})\big]\mathrm{d}s\\ 
&+ \int_{0}^{t}\Big(L(\gamma_{s,u}^{\xi,a} - \gamma_{s,u}^{\xi,b})L^{\dagger} - \frac{1}{2}\{L^{\dagger}L,(\gamma_{s,u}^{\xi,a} - \gamma_{s,u}^{\xi,b})\}\Big)\mathrm{d}s  \\
+ \sqrt{\eta}\int_{0}^{t}\Big(L{}(\gamma_{s,u}^{\xi,a} &- \gamma_{s,u}^{\xi,b}) + (\gamma_{t,u}^{\xi,a} - \gamma_{s,u}^{\xi,b})L^{\dagger}\\
& -\tr\big(K\gamma_{s,u}^{\xi,a}\big)(\gamma_{s,u}^{\xi,a} - \gamma_{s,u}^{\xi,b}) - \tr\big(K(\gamma_{s,u}^{\xi,a} - \gamma_{s,u}^{\xi,b})\big)\gamma_{s,u}^{\xi,b} \Big)\mathrm{d}W_s^{u}\Big\|_{2}^{2}.
\end{align*}}
By the triangular inequality and the fact that $(a+b)^2 \leq 2(a^2+b^2)$,
{{\small} 
\begin{align*}\Big\|\gamma_{t,u}^{\xi,a} - \gamma_{t,u}^{\xi,b}\Big\|_{2}^{2} &\leq 2\Big\|\int_{0}^{t}-\mathrm{i}\big[\tilde{H} + \int_{0}^{1}w(u,v)A^{\xi_t^{v}}\mathrm{d}v, (\gamma_{s,u}^{\xi,a} - \gamma_{s,u}^{\xi,b})\big]\mathrm{d}s\\ 
&+ \int_{0}^{t}\Big(L(\gamma_{s,u}^{\xi,a} - \gamma_{s,u}^{\xi,b})L^{\dagger} - \frac{1}{2}\{L^{\dagger}L,(\gamma_{s,u}^{\xi,a} - \gamma_{s,u}^{\xi,b})\}\Big)\mathrm{d}s   \Big \|^2_{2}\\
+2&\eta\Big\|\int_{0}^{t}\Big(L{}(\gamma_{s,u}^{\xi,a} - \gamma_{s,u}^{\xi,b}) + (\gamma_{s,u}^{\xi,a} - \gamma_{s,u}^{\xi,b})L^{\dagger}\\
& - \tr\big(K\gamma_{s,u}^{\xi,a}\big)(\gamma_{s,u}^{\xi,a} - \gamma_{s,u}^{\xi,b}) - \tr\big(K(\gamma_{s,u}^{\xi,a} - \gamma_{s,u}^{\xi,b})\big)\gamma_{s,u}^{\xi,b} \Big) \mathrm{d}W_s^{u}\Big\|_{2}^{2}.
\end{align*}}
By taking expectations and applying the Cauchy-Schwarz inequality and the Itô-Isometry,
{{\small} 
\begin{align*}\mathbb{E}_{\mathbb{P}}\Big[\Big\|\gamma_{t,u}^{\xi,a} - \gamma_{t,u}^{\xi,b}\Big\|_{2}^{2}\Big] &\leq 4\mathbb{E}_{\mathbb{P}}\Big[t\int_{0}^{t}\Big\|\big[\tilde{H} + \int_{0}^{1}w(u,v)A^{\xi_t^{v}}\mathrm{d}v, (\gamma_{s,u}^{\xi,a} - \gamma_{s,u}^{\xi,b})\big]\Big\|_{2}^{2}\mathrm{d}s\Big]\\ 
+ 4\mathbb{E}_{\mathbb{P}}\Big[t\int_{0}^{t}&\Big\|\Big(L(\gamma_{s,u}^{\xi,a} - \gamma_{s,u}^{\xi,b})L^{\dagger} - \frac{1}{2}\{L^{\dagger}L,(\gamma_{s,u}^{\xi,a} - \gamma_{s,u}^{\xi,b})\}\Big)\Big \|^2_{2}\mathrm{d}s \Big]  \\
+4\eta\mathbb{E}_{\mathbb{P}}\Big[\int_{0}^{t}&\Big\|\Big(L{}(\gamma_{s,u}^{\xi,a} - \gamma_{s,u}^{\xi,b}) + (\gamma_{s,u}^{\xi,a} - \gamma_{s,u}^{\xi,b})L^{\dagger} - \tr\big(K\gamma_{s,u}^{\xi,a}\big)(\gamma_{s,u}^{\xi,a} - \gamma_{s,u}^{\xi,b}) \Big\|_{2}^{2}\mathrm{d}s\Big]\\ 
+ 4\eta\mathbb{E}_{\mathbb{P}}\Big[\int_{0}^{t}&\Big\| \tr\big(K(\gamma_{s,u}^{\xi,a} - \gamma_{s,u}^{\xi,b})\big)\gamma_{s,u}^{\xi,b} \Big\|_{2}^{2}\mathrm{d}s\Big].
\end{align*}}
Note that each contribution is linear in $\gamma_{t,u}^{\xi,a} - \gamma_{t,u}^{\xi,b}$,
and thanks to the Cauchy-Schwarz inequality, we get
{{\small}
\begin{align*}
    \tr^{2}\big(K(\gamma_{t,u}^{\xi,a} - \gamma_{t,u}^{\xi,b})\big) &\leq \|K\|_{2}^{2}\|\gamma_{t,u}^{\xi,a} - \gamma_{t,u}^{\xi,b}\|_{2}^{2}.
\end{align*}
}
Then, we find
{{\small} \begin{align*}
\sup_{u \in I} \mathbb{E}_{\mathbb{P}}\Big[\Big\|\gamma_{t,u}^{\xi,a} - \gamma_{t,u}^{\xi,b}\Big\|_{2}^{2}\Big] &\leq C\int_{0}^{t}\sup_{u \in I}\mathbb{E}_{\mathbb{P}}\Big[\Big\|\gamma_{s,u}^{\xi,a} - \gamma_{s,u}^{\xi,b}\Big\|_{2}^{2}\Big]\mathrm{d}s.
\end{align*}}
By Gr\"onwall lemma, we conclude $\gamma^{\xi,a}=\gamma^{\xi,b}$ almost surely. Thanks to the Hilbert space formulation of Yamada-Watanabe theorem (see \cite[Appendix E]{liurockner15spde}), we obtain the existence of a unique strong solution.

\item \textit{Fixed point step:} We now define the mapping 
{{\small} \begin{align*}\Xi : \mathcal{C}_{I} \to
\mathcal{C}_{I} \; \text{by} \;\; 
\Xi(\xi) := \big((\mathbb{E}_{\mathbb{P}}[\gamma_{t,u}^{\xi}])_{0 \leq t \leq T}\big)_{u \in I},
\end{align*}}
so that the process $\gamma^{m}$ corresponds to the solution of \eqref{graphonbelavkin} if and only if $m = \Xi(m)$.
Therefore, we should prove existence of a unique fixed point, i.e., by proving that $\Xi$ is contraction
with respect to the  norm on $\mathcal{C}_{I}$. 

Let us pick two arbitrary elements $\tilde{\xi} = (\tilde{\xi}^{u})_{u \in I}$ and $\hat{\xi} = (\hat{\xi}^{u})_{u \in I}$ in $\mathcal{C}_{I}$ equipped with the norm $\|\xi\|_{\star,t} := \sup_{r \in [0,t]}\sup_{u \in I}\|\xi^{u}_{r}\|_{2}$.
Set
{{\small} \begin{align*}
\Delta\gamma_{t,u} := \gamma_{t,u}^{\tilde{\xi}} - \gamma_{t,u}^{\hat{\xi}},\; \Delta{\xi^{u}_{t}} := \tilde{\xi}^{u}_{t} - \hat{\xi}^{u}_{t}, \; \Delta{\xi} := \tilde{\xi} - \hat{\xi}, \; \Delta A^{\xi^{v}} := A^{\tilde{\xi}^{v}} - A^{\hat{\xi}^{v}}.
\end{align*}}
Then,
{{\small} \begin{align*}
\Delta\gamma_{t,u} &=-\mathrm{i}\int_{0}^{t}\Big[\tilde{H} ,\Delta\gamma_{s,u}\Big]\mathrm{d}s - \mathrm{i}\int_{0}^{t}\Big[\int_{0}^{1}w(u,v)A^{\xi^{v}}\mathrm{d}v, \Delta\gamma_{s,u}\Big]\mathrm{d}s\\
&\qquad - \mathrm{i}\int_{0}^{t}\Big[\int_{0}^{1}w(u,v)\Delta A^{\xi^{v}}\mathrm{d}v, \gamma_{s,u}^{\hat{\xi}}\Big]\mathrm{d}s\\    &+\int_{0}^{t}\Big(L\Delta\gamma_{s,u}L^{\dagger} - \frac{1}{2}\{L^{\dagger}L, \Delta\gamma_{s,u}\}\Big)\mathrm{d}s + \sqrt{\eta}\int_{0}^{t}\Big(L\Delta\gamma_{s,u} + \Delta\gamma_{s,u}L^{\dagger}\Big)\mathrm{d}W_{s}^{u} \\
&\quad\quad\quad - \sqrt{\eta}\int_{0}^{t}\Big(\tr\big(K\Delta\gamma_{s,u}\big)\gamma_{s,u}^{\tilde{\xi}} + \tr\big(K\gamma_{s}^{\hat{\xi}^{u}}\big)\Delta\gamma_{s,u}\Big)\mathrm{d}W_{s}^{u},
\end{align*}}
and,
{{\small} \begin{align*}
\big\|\Xi_{u}(\tilde{\xi})_t - \Xi_{u}(\hat{\xi})_t&\big\|_{2} \leq \mathbb{E}_{\mathbb{P}}\big[\|\Delta\gamma_{t,u}\|_{2} \big]\\
&\leq \int_{0}^{t}\mathbb{E}_{\mathbb{P}}\Big[ \Big\| [H,\Delta\gamma_{s,u}]\Big\|_{2} + \Big\|L\Delta\gamma_{s,u}L^{\dagger}\Big\|_{2} + \frac{1}{2}\Big\|\{L^{\dagger}L,\Delta\gamma_{s,u}\}\Big\|_{2}\Big]\mathrm{d}s\\
+\int_{0}^{t}\Big(\mathbb{E}_{\mathbb{P}}&\Big[\Big\|[\int_{0}^{1}w(u,v)A^{\xi^{v}}\mathrm{d}v,\Delta\gamma_{s,u}] \Big\|_{2} \Big] + \mathbb{E}_{\mathbb{P}}\Big[\Big\|[\int_{0}^{1}w(u,v)\Delta A^{\xi^{v}}\mathrm{d}v,\gamma_{s,{u}}] \Big\|_{2} \Big]\Big)\mathrm{d}s\\
+ \int_{0}^{t}\Big(\mathbb{E}_{\mathbb{P}}&\Big[\Big\| L\Delta\gamma_{s,u} + \Delta\gamma_{s,u}L^{\dagger}\Big\|_{2}^2\Big]\Big)^{\frac{1}{2}}\mathrm{d}s\\
&+\int_{0}^{t}\Big(\mathbb{E}_{\mathbb{P}}\Big[\Big\|\tr\big(K\Delta\gamma_{s,u}\big)\gamma_{s,u}^{\tilde{\xi}} \Big\|_{2}^{2}\Big]\Big)^{\frac{1}{2}} + \Big(\mathbb{E}_{\mathbb{P}}\Big[\Big\|\tr\big(K\gamma_{s,u}^{\hat{\xi}}\big)\Delta\gamma_{s,u} \Big\|_{2}^{2}\Big]\Big)^{\frac{1}{2}} \mathrm{d}s\\
&\leq C\int_{0}^{t}\Big(\mathbb{E}_{\mathbb{P}}\Big[\Big\|\Delta\gamma_{s,u}\Big\|_{2}\Big] + \int_{0}^{1}w(u,v)\big\|\Delta\xi_s^{v}\big\|_{2}\mathrm{d}v\Big)\mathrm{d}s.
\end{align*}}
In view of the Gr\"onwall’s inequality, one concludes the existence of some constant $C$ such that 
{{\small} \begin{align*}
    \|\Xi_{u}(\tilde{\xi})_{r} - \Xi_{u}(\hat{\xi})_{r}\|_{2} \leq C\int_{0}^{t}\int_{0}^{1}w(u,v)\|\Delta\xi_{s}^{v}\|_{2}\mathrm{d}v\mathrm{d}s
    \leq C\int_{0}^{t}\sup_{u \in I}\|\Delta\xi_{s}^{u}\|_{2}\mathrm{d}s.
\end{align*}}

Replacing $\xi^{a}$ (resp. $\xi^{b}$) by $\Xi(\xi^{a})$ (resp. $\Xi(\xi^{b})$), it follows that 
{{\small} \begin{align*}
    \|\Xi_{u}^{(2)}(\tilde{\xi})_{t} - \Xi_{u}^{(2)}(\hat{\xi})_{t}\|_{2} \leq C^2\int_{0}^{t}\int_{0}^{s}\sup_{u \in I}\|\Delta\xi_{s}^{u}\|_{2}\mathrm{d}r\mathrm{d}s
    \leq \frac{C^{2}t^{2}}{2}\sup_{0 \leq r \leq t}\sup_{u \in I}\|\Delta\xi_{r}^{v}\|_{2}.
\end{align*}}
By induction, for any $k \geq 1$, 
{{\small} \begin{align*}
    \|\Xi_{u}^{(k)}(\tilde{\xi})_{t} - \Xi_{u}^{(k)}(\hat{\xi})_{t}\|_{2} &\leq \frac{C^{k}t^{k}}{k!}\sup_{0 \leq r \leq t}\sup_{u \in I}\|\Delta\xi_{r}^{v}\|_{2},
\end{align*}}
and thus,
{{\small} \begin{align*}
\sup_{0 \leq t \leq T}\sup_{u \in I}\|\Xi_{u}^{(k)}(\tilde{\xi})_{t} - \Xi_{u}^{(k)}(\hat{\xi})_{t} \|_{2} &\leq \frac{C^{k}T^{k}}{k!}\sup_{0 \leq r \leq t}\sup_{u \in I}\|\Delta\xi_{r}^{v}\|_{2},\\
\|\Xi^{(k)}(\tilde{\xi}) - \Xi^{(k)}(\hat{\xi}) \|_{\star,T} &\leq \frac{C^{k}}{k!}\|\Delta\xi\|_{\star,T}.
\end{align*}}
Therefore, for $k$ large enough, $\Xi$ is a contraction.

\item \textit{Stability with respect to graphon:}
Let $(\tilde{\gamma}_t)_{t \geq 0}$ and $(\hat{\gamma}_t)_{t \geq 0}$ be two solutions of Equation  \eqref{graphonbelavkin} associated with graphon $\tilde{w}$ and $\hat{w}$, respectively, 
with the same initial conditions. Then,
{{\small}
\begin{align*}
\quad &\Big\|\tilde{\gamma}_{t,u} - \hat{\gamma}_{t,u}  \Big\|^{2}_{2} = \Big\|\int_{0}^{t}\Big(-\mathrm{i}\big[\tilde{H},\tilde{\gamma}_{s,u} - \hat{\gamma}_{s,u}\big] 
- \mathrm{i}\big[\int_{0}^{1}\tilde{w}(u,v)A^{\mathbb{E}_{\mathbb{P}}[\tilde{\gamma}_{s,v}]}\mathrm{d}v,\tilde{\gamma}_{s,v}\big]\\
&\qquad\qquad\qquad + \mathrm{i}\big[\int_{0}^{1}\hat{w}(u,v)A^{\mathbb{E}_{\mathbb{P}}[\hat{\gamma}_{s,v}]}\mathrm{d}v,\hat{\gamma}_{s,u}\big]\Big)\mathrm{d}s \\
&\qquad + \int_{0}^{t}\Big(L(\tilde{\gamma}_{s,u} - \hat{\gamma}_{s,u})L^{\dagger} - \frac{1}{2}\{L^{\dagger}L,(\tilde{\gamma}_{s,u} - \hat{\gamma}_{s,u})\}\Big)\mathrm{d}s\\
&\qquad + \sqrt{\eta}\int_{0}^{t}\Big( L(\tilde{\gamma}_{s,u} - \hat{\gamma}_{s,u}) + (\tilde{\gamma}_{s,u} - \hat{\gamma}_{s,u})L^{\dagger}- \tr(K\tilde{\gamma}_{s,u})\tilde{\gamma}_{s,u} + \tr(K\hat{\gamma}_{s,u})\hat{\gamma}_{s,u}\Big)\mathrm{d}W_{s}^{u}\Big\|_{2}^{2}\\
\leq\;& 2\Big\|\int_{0}^{t}\big[\tilde{H},\tilde{\gamma}_{s,u} - \hat{\gamma}_{s,u}\big]\mathrm{d}s\Big\|_{2}^{2} 
+ 2\Big\|\int_{0}^{t}\Big[\int_{0}^{1}\tilde{w}(u,v)A^{\mathbb{E}_{\mathbb{P}}[\tilde{\gamma}_{s,v} -\hat{\gamma}_{s,v}]}\mathrm{d}v,\tilde{\gamma}_{s,v}\Big]\mathrm{d}s\Big\|_{2}^{2} \\ 
&+ 2\Big\|\int_{0}^{t}\Big[\int_{0}^{1}\hat{w}(u,v)A^{\mathbb{E}_{\mathbb{P}}[\hat{\gamma}_{s,v}]}\mathrm{d}v,(\hat{\gamma}_{s,u}-\tilde{\gamma}_{s,u})\Big]\mathrm{d}s\Big\|_{2}^{2}\\
&+ 2\Big\|\int_{0}^{t}\Big[\int_{0}^{1}(\tilde{w}(u,v)-\hat{w}(u,v))A^{\mathbb{E}_{\mathbb{P}}[\hat{\gamma}_{s,v}]}\mathrm{d}v,\tilde{\gamma}_{s,u}\Big]\mathrm{d}s\Big\|_{2}^{2}\\
&+ 2\Big\|\int_{0}^{t}\Big(L(\tilde{\gamma}_{s,u} - \hat{\gamma}_{s,u})L^{\dagger} - \frac{1}{2}\{L^{\dagger}L,(\tilde{\gamma}_{s,u} - \hat{\gamma}_{s,u})\}\Big)\mathrm{d}s \Big\|_{2}^{2}\\
&+ 2\eta\Big\|\int_{0}^{t}\Big( L(\tilde{\gamma}_{s,u} - \hat{\gamma}_{s,u}) + (\tilde{\gamma}_{s,u} - \hat{\gamma}_{s,u})L^{\dagger}\Big)\mathrm{d}W_{s}^{u}\Big\|_{2}^{2}\\
&+ 2\eta\Big\|\int_{0}^{t}\Big(\tr\big(K(\tilde{\gamma}_{s,u} - \hat{\gamma}_{s,u})\big)\tilde{\gamma}_{s,u} + \tr\big(K\hat{\gamma}_{s,u}\big)(\tilde{\gamma}_{s,u} -\hat{\gamma}_{s,u})\Big)\mathrm{d}W_{s}^{u}\Big\|_{2}^{2}.
\end{align*}
}

By taking the expectation,
{{\small} 
\begin{align*}
 & \mathbb{E}_{\mathbb{P}}\Big[\Big\|\tilde{\gamma}_{t,u} - \hat{\gamma}_{t,u}  \Big\|^{2}_{2} \Big] \\
&\leq C\,\mathbb{E}_{\mathbb{P}}\Big[\int_{0}^{t}\Big\| \tilde{\gamma}_{s,u} - \hat{\gamma}_{s,u}\Big\|_{2}^{2}\mathrm{d}s \Big] 
+ \mathbb{E}_{\mathbb{P}}\Big[\Big\|\int_{0}^{t}\Big[\int_{0}^{1}(\tilde{w}(u,v)-\hat{w}(u,v))A^{\mathbb{E}_{\mathbb{P}}[\hat{\gamma}_{s,v}]}\mathrm{d}v,\tilde{\gamma}_{s,u}\Big]\mathrm{d}s\Big\|_{2}^{2} \Big],\\
&\leq C\Big(\int_{0}^{t}\mathbb{E}_{\mathbb{P}}\Big[\Big\| \tilde{\gamma}_{s,u} - \hat{\gamma}_{s,u}\Big\|_{2}^{2} \Big]\mathrm{d}s 
+ \int_{0}^{t}\int_{0}^{1}\Big\|(\tilde{w}(u,v)-\hat{w}(u,v))A^{\mathbb{E}_{\mathbb{P}}[\hat{\gamma}_{s,v}]}\Big\|_{2}^{2}\mathrm{d}v\mathrm{d}s\Big),\\
&\stackrel{\text{Lemma~\eqref{mfoperatornorm}}}{\leq} 
C\Big(\int_{0}^{t}\mathbb{E}_{\mathbb{P}}\Big[\Big\| \tilde{\gamma}_{s,u} - \hat{\gamma}_{s,u}\Big\|_{2}^{2} \Big]\mathrm{d}s 
+ \int_{0}^{t}\int_{0}^{1}\Big|(\tilde{w}(u,v)-\hat{w}(u,v))\Big|^{2}\|a\|_{2}^{2}\|\mathbb{E}_{\mathbb{P}}[\hat{\gamma}_{s,v}]\|_{2}^{2}\mathrm{d}v\mathrm{d}s\Big),\\
&\leq C\Big(\int_{0}^{t}\mathbb{E}_{\mathbb{P}}\Big[\Big\| \tilde{\gamma}_{s,u} - \hat{\gamma}_{s,u}\Big\|_{2}^{2} \Big]\mathrm{d}s 
+ T\|a\|_{2}^{2}\int_{0}^{1}\Big|(\tilde{w}(u,v)-\hat{w}(u,v))\Big|^{2}\mathrm{d}v\Big).
\end{align*}
}

Integrating both sides over $I$ and using the boundeness of the graphon, we obtain
{{\small} 
\begin{align*}
\mathbb{E}_{\mathbb{P}}\Big[\int_{0}^{1}\Big\|\tilde{\gamma}_{t,u} - \hat{\gamma}_{t,u}\Big\|^{2}_{2}\mathrm{d}u \Big] \leq& C\Big( \int_{0}^{t}\mathbb{E}_{\mathbb{P}}\Big[\int_{0}^{1}\Big\| \tilde{\gamma}_{s,u} - \hat{\gamma}_{s,u}\Big\|_{2}^{2}\mathrm{d}u \Big]\mathrm{d}s \\
&+ \underbrace{\int_{0}^{1}\int_{0}^{1}\Big|\Big(\tilde{w}(u,v) - \hat{w}(u,v) \Big)\Big|^{}\mathrm{d}v\mathrm{d}u}_{= \|\tilde{w} - \hat{w} \|_{L^1}^{}}\Big).   
\end{align*}}
Applying Gr\"onwall lemma concludes the proof.
\end{enumerate}
\end{proof}

\begin{remark}[Strong Solution]
\iffalse
In finite dimensional case i.e $\dim(\mathbb{H}) < \infty $, thanks to Yamada-Watanabe theorem \cite{ikedawatanabe}[Theorem 1.1 page 148] we have strong solution of the graphon system.
\fi

In the case of perfect measurement where detector efficiency $\eta = 1$, homogeneous graphon $w \equiv 1 $ and bounded coupling operator $L$, the well-posedness in the strong sense was proved by a different method in \cite{kolokoltsov25quantumstoeq}. An extension to the unbounded coupling operator was recently provided in \cite{kolokoltsov25mathematical}.
For others results concerning well-posedness of stochastic Schrödinger equation, see \cite{mora08basic,fagnola12sto,holevo96} in infinite dimensional setting and \cite{pellegrini08diffusive} for the density operator equation in finite dimensional setting.  
\end{remark}

\section{Propagation of chaos: Block-wise case}\label{sec:Chaos}
In this section, we justify the limiting graphon system through a propagation of chaos in the case of complete observation ($\eta = 1$) and block-wise interaction, this means that the intensity of pairwise interaction between two particles converges to a function that is constant on blocks. This setup can be seen as a first step toward a more general justification of the mean-field limit for non-exchangeable quantum systems. 

For clarity, we slightly re-adjust the framework and notation. Specifically, we consider $c$-packets of $N$ quantum particles interacting through a graph $G_{cN} = (V_{cN}, E_{cN})$. The case where the blocks are not of equal size can be treated similarly. Indeed, we can consider $c$ packets, each containing $N_i$ particles, for $i \in [c]$, such that $\sum_{i=1}^{c}N_i = N,$ and $\lim_{N \to \infty} \frac{N_i}{N} = O(1)$. For simplicity of notation and exposition, we focus here on the case of equally-sized blocks, i.e., $N_i = N$ for all $i$.

The dynamics of the system is described by a $cN$-body Belavkin diffusive equation:

{{\small} \begin{align}
\mathrm{d}\boldsymbol{\rho}^{cN}_{t} &= -\mathrm{i}\Big[\sum_{i_l=1_1}^{c_N}\mathbf{\tilde{H}}_{i_l}\! +\! \tfrac{1}{2cN}\sum_{i_l=1_1}^{c_N}\sum_{j_{l'} = 1_1}^{c_N}\!\xi_{j_{l'}i_{l}}\mathbf{A}_{j_{l'}i_{l}}, \boldsymbol{\rho}^{cN}_{t}\Big]\!\mathrm{d}t\nonumber\\
&+\sum_{i_l=1_1}^{c_N}\big(\mathbf{L}_{i_l}\boldsymbol{\rho}^{cN}_{t}\mathbf{L}_{i_l}^{\dagger}-\tfrac{1}{2}\{\mathbf{L}_{i_l}^{\dagger}\mathbf{L}_{i_l},\boldsymbol{\rho}^{cN}_{t}\}\big)\mathrm{d}t\nonumber\\
&\qquad+\sum_{i_l=1_1}^{c_N}\Big(\mathbf{L}_{i_l}\boldsymbol{\rho}^{cN}_{t} + \boldsymbol{\rho}^{cN}_{t}\mathbf{L}_{i_l}^{\dagger} - \tr\big((\mathbf{L}_{i_l}+\mathbf{L}_{i_l}^{\dagger})\boldsymbol{\rho}^{cN}_{t}\big)\boldsymbol{\rho}^{cN}_{t}\Big)\mathrm{d}W_t^{i_l}.\label{cNbelavkin}
\end{align}}
\noindent
Here, the index $i_l$ refers to the particle $l$ in class $i$, and the shorthand notation is defined as:
{{\small}\begin{align*}
    \sum_{i_l = 1_1}^{c_N}f_{i_l} &:= \sum_{i=1}^{c}\sum_{l=1}^{N} f_{i_l},\quad
    \sum_{i_l = 1_1}^{c_N}\sum_{j_{l'} = 1_1}^{c_N} f_{i_lj_{l'}} := \sum_{i=1}^{c}\sum_{l=1}^{N}\sum_{j=1}^{c}\sum_{l'=1}^{N}f_{i_lj_{l'}}.
\end{align*}}
The lifted operators and marginals are given by
{{\small}\begin{align*}
    \mathbf{O}_{i_l} &:= \overbrace{\mathbf{1}\otimes\dots\otimes\mathbf{1}}^{(i-1)N \text{ times}}\otimes\overbrace{\mathbf{1}\otimes\dots\otimes\underbrace{O}_{l-\text{th}}\otimes\dots\otimes\mathbf{1}}^{N \text{ times}}\otimes\overbrace{\mathbf{1}\otimes\dots\otimes\mathbf{1}}^{(c-i)N \text{ times}},\\
    \boldsymbol{\gamma}_{i} &:= \boldsymbol{\gamma}_{i_1},\quad \boldsymbol{g}_{t,i_l} := \mathbf{1} - \boldsymbol{\gamma}_{t,i_l},\\
    \mathbf{O}_{i} &:= \mathbf{O}_{i_1},\quad \mathbf{O}_{i_li_l} := 0,\quad \mathbf{O}_{i_lj_{l'}} = \mathbf{O}_{j_{l'}i_l}, \\
    \rho_{t,i_l} &:=  \tr_{[N]\setminus\{l\}}\big(\tr_{[c]\setminus\{i\}}(\boldsymbol{\rho}_{t}^{cN})\big).
\end{align*}}
The interaction strength coefficient $\xi_{j_{l'}i_l} := \xi_{j_{l'}i_l}^{cN}$ is derived from a step  kernel function  ${w}^{G_{cN}}$,  and is constructed in the two following ways:
\begin{itemize}
    \item $\xi_{j_{l'}i_l} = {w}^{G_{cN}}(\frac{jN - N + l'}{cN},\frac{iN - N + l}{cN})$;
    \item $\xi_{j_{l'}i_l} = \xi_{i_lj_{l'}}= \text{Bernoulli}\Big({w}^{G_{cN}}\big(\frac{jN - N + l'}{cN},\frac{iN - N + l}{cN}\big)\Big)$.
\end{itemize}

The step graphon $w^{G_{cN}}$ converges with respect to the cut metric to the block-wise graphon function $w \in \mathcal{W}_{0}$, where $w$ is defined as follows.
Let the unit interval $I$ be partitioned into $c$ disjoint subintervals $\displaystyle I = \bigcup_{i = 1}^{c} I_i, \; \text{with } I_i = \Big[\frac{i-1}{c},\frac{i}{c}\Big), \; \text{for }  1\leq i < c, \; \text{and} \; I_{c} = \Big[\frac{c-1}{c},1\Big]$. Then the limiting graphon $w$ is defined by
\begin{align*}
    w(u,v) &:= \begin{cases}
        w_{ji}, \quad (u,v) \in I_{j}\times I_{i},\\
        0,\quad\quad\,\,\,\textrm{otherwise.}
    \end{cases}
\end{align*}

\begin{center}
\begin{tikzpicture}[scale=0.6]

% Premier graphon 4x4
\begin{scope}[shift={(0,0)}]
    \draw (0,0) rectangle (4,4);
    \draw (0,2) -- (4,2);
    \draw (2,0) -- (2,4);
    \fill[black] (2,2) rectangle (4,4); % Quadrant sup droit
    \fill[black] (0,0) rectangle (2,2); % Quadrant inf gauche
\end{scope}

% Deuxième graphon 4x4
\begin{scope}[shift={(6,0)}]
    \draw (0,0) rectangle (4,4);
    \draw (0,2) -- (4,2);
    \draw (2,0) -- (2,4);
    \fill[black] (0,2) rectangle (2,4); % Quadrant sup gauche
    \fill[black] (2,0) rectangle (4,2); % Quadrant inf droit
\end{scope}

% Troisième graphon (3x3 mis à l'échelle pour faire 4x4)
\begin{scope}[shift={(12,0)}, scale=4/3]
    \foreach \x in {0,1,2,3} {
        \draw (\x,0) -- (\x,3);
        \draw (0,\x) -- (3,\x);
    }
    \fill[white] (0,2) rectangle (1,3);
    \fill[gray] (1,2) rectangle (2,3);
    \fill[white] (2,2) rectangle (3,3);

    \fill[gray] (0,1) rectangle (1,2);
    \fill[white] (1,1) rectangle (2,2);
    \fill[gray!80] (2,1) rectangle (3,2);

    \fill[white] (0,0) rectangle (1,1);
    \fill[gray!80] (1,0) rectangle (2,1);
    \fill[white] (2,0) rectangle (3,1);
\end{scope}

% Quatrième graphon : 8x8
% On le place à droite, donc à x=18 (6 unités plus loin que le 3e)
\begin{scope}[shift={(18,0)}, scale = 1/2]
    % Dessin de la grille 8x8
    \foreach \x in {0,...,8} {
        \draw (\x,0) -- (\x,8);
        \draw (0,\x) -- (8,\x);
    } 

% First
    \fill[black!10] (0,0) rectangle (1,1);
    \fill[black!20] (0,1) rectangle (1,2);
    \fill[black!30] (0,2) rectangle (1,3);
    \fill[black!40] (0,3) rectangle (1,4);
    \fill[black!50] (0,4) rectangle (1,5);
    \fill[black!60] (0,5) rectangle (1,6);
    \fill[black!70] (0,6) rectangle (1,7);
    
% Second

    \fill[black!10] (1,1) rectangle (2,2);
    \fill[black!20] (1,2) rectangle (2,3);
    \fill[black!30] (1,3) rectangle (2,4);
    \fill[black!40] (1,4) rectangle (2,5);
    \fill[black!50] (1,5) rectangle (2,6);
    %Jump
    \fill[black!70] (1,7) rectangle (2,8);

% Third
    \fill[black!10] (2,2) rectangle (3,3);
    \fill[black!20] (2,3) rectangle (3,4);
    \fill[black!30] (2,4) rectangle (3,5);
    %Jump
    \fill[black!50] (2,6) rectangle (3,7);
    \fill[black!60] (2,7) rectangle (3,8);

% Fourth
    \fill[black!30] (3,5) rectangle (4,6);
    \fill[black!40] (3,6) rectangle (4,7);
    \fill[black!50] (3,7) rectangle (4,8);

    \fill[black!10] (3,3) rectangle (4,4);
    
    \fill[black!10] (4,4) rectangle (5,5);
    \fill[black!20] (4,5) rectangle (5,6);
    \fill[black!30] (4,6) rectangle (5,7);
    \fill[black!40] (4,7) rectangle (5,8);

    \fill[black!10] (5,5) rectangle (6,6);
    \fill[black!20] (5,6) rectangle (6,7);
    \fill[black!30] (5,7) rectangle (6,8);
    
    \fill[black!10] (6,6) rectangle (7,7);
    \fill[black!20] (6,7) rectangle (7,8);

    \fill[black!10] (7,7) rectangle (8,8);

\end{scope}
\end{tikzpicture}

\textbf{Schematic picture:} Differents scenarios of block-wise graphons.
\end{center}

If the initial state is pure, i.e., {$\boldsymbol{\rho}^{cN}_0 = \ket{\boldsymbol{\Psi}_{0}^{cN}}\bra{\boldsymbol{\Psi}_{0}^{cN}}$, then, by Proposition \ref{puritypreservation}, the state remains pure i.e $\boldsymbol{\rho}^{cN}_t = \ket{\boldsymbol{\Psi}_{t}^{cN}}\bra{\boldsymbol{\Psi}_{t}^{cN}}$} and can be written in the following pure state representation:
{{\small}
\begin{align*}
\mathrm{d}\boldsymbol{\Psi}_{t}^{cN} &= -\Big(\mathrm{i}\Big(\sum_{i_l=1_1}^{c_N}\mathbf{\tilde{H}}_{i_l} + \frac{1}{2cN}\sum_{j_{l'}=1_1}^{c_N}\xi_{j_{l'}i_l}\mathbf{A}_{j_{l'}i_l} -  \langle \frac{\mathbf{L}_{i_l} + \mathbf{L}_{i_l}^{\dagger}}{2}\rangle_{\boldsymbol{\Psi}_{t}^{cN}}\frac{\mathbf{L}_{i_l} - \mathbf{L}_{i_l}^{\dagger}}{2\mathrm{i}}\Big)\\ 
&\qquad + \frac{1}{2}\sum_{i_l=1_1}^{c_N}\Big((\mathbf{L}_{i_l} - \langle \frac{\mathbf{L}_{i_l} + \mathbf{L}_{i_l}^{\dagger}}{2}\rangle_{\boldsymbol{\Psi}_{t}^{cN}})^{\dagger}(\mathbf{L}_{i_l} - \langle \frac{\mathbf{L}_{i_l} + \mathbf{L}_{i_l}^{\dagger}}{2}\rangle_{\boldsymbol{\Psi}_{t}^{cN}})\Big) \Big){\boldsymbol{\Psi}_{t}^{cN}}\mathrm{d}t\\ &+ \sum_{i_l=1_1}^{c_N}\Big( \mathbf{L}_{i_l} -\langle \frac{\mathbf{L}_{i_l} + \mathbf{L}_{i_l}^{\dagger}}{2}\rangle_{\boldsymbol{\Psi}_{t}^{cN}} \Big)\boldsymbol{\Psi}_{t}\mathrm{d}W_t^{i_l}. 
\end{align*}
}

\noindent
A key element in deriving a mean-field limit within quantum systems revolves around the principle of indistinguishability. Unlike classical systems, quantum mechanics introduces two distinct categories of indistinguishable particles: fermions and bosons. For the reader’s convenience a detailed discussion of these categories is reviewed in Appendix~\ref{sec:appA}. 

In the subsequent analysis, we investigate a multi-class bosonic system, which implies symmetry by class for the wave-function state. 

\begin{proposition}[Class‐wise symmetry]
\label{prop:symmetry}
Suppose that the initial wave function \\
 $\boldsymbol{\Psi}_{0}^{cN}(x_{1_1},\dots,x_{c_N})$ is invariant in law under any permutation $\sigma$ applied to the $N$ particles of each class, i.e.,
{{\small} \begin{align}
\textrm{Law}\Big(\boldsymbol{\Psi}_{0}^{cN} \bigl(x_{1_1},\dots,x_{c_N}\bigr)\Big)
&= \textrm{Law}\Big({\Psi}_{0}^{cN} \Bigl(x_{1_{\sigma^{1}(1)}},\dots,
  x_{1_{\sigma^{1}(N)}}, \dots, 
  x_{c_{\sigma^{c}(1)}},\dots,x_{c_{\sigma^{c}(N)}}\Bigr)\Big),
\label{cNSymetry}
\end{align}}
for all permutations $\sigma^1,\dots,\sigma^c \in \mathfrak{S}_N$.  Then this symmetry is preserved in distribution by the \emph{$cN$-body Belavkin Equation} \eqref{cNbelavkin} \emph{if} the interaction coefficients $\{\xi_{j_{l'}i_{l}}\}$ are invariants in law:
$$
\forall i,j\in [c],\;\forall\,l,l'\in [N], 
\quad \; \; \textrm{Law}(\xi_{j_{\sigma^{j}({l'})}i_{\sigma^{i}({l})}}) =  \textrm{Law}(\xi_{j_{l'}i_{l}}). 
$$
\end{proposition}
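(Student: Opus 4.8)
The plan is to establish class-wise symmetry by a uniqueness argument for the $cN$-body Belavkin equation together with equivariance of its coefficients. First I would fix permutations $\sigma^1,\dots,\sigma^c \in \mathfrak{S}_N$ and introduce the unitary permutation operator $\mathbf{U}_{\sigma}$ on $\mathbb{H}^{cN} = L^2_{\mathbb{C}}(\mathfrak{X}^{cN},\mu^{\otimes cN})$ that acts by relabelling the particle coordinates within each class according to $\sigma^1,\dots,\sigma^c$. Then I would define the transformed process $\widetilde{\boldsymbol{\rho}}^{cN}_t := \mathbf{U}_{\sigma}\boldsymbol{\rho}^{cN}_t \mathbf{U}_{\sigma}^{\dagger}$ (equivalently $\widetilde{\boldsymbol{\Psi}}^{cN}_t := \mathbf{U}_{\sigma}\boldsymbol{\Psi}^{cN}_t$ in the pure-state picture) and simultaneously relabel the driving noises by $\widetilde{W}^{i_l}_t := W^{i_{\sigma^i(l)}}_t$. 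Since the $\{W^{i_l}\}$ are independent Wiener processes, the relabelled family $\{\widetilde{W}^{i_l}\}$ has the same law.

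The key algebraic step is to check how $\mathbf{U}_{\sigma}$ conjugates each term of \eqref{cNbelavkin}. The single-particle operators satisfy $\mathbf{U}_{\sigma}\mathbf{\tilde{H}}_{i_l}\mathbf{U}_{\sigma}^{\dagger} = \mathbf{\tilde{H}}_{i_{\sigma^i(l)}}$ and $\mathbf{U}_{\sigma}\mathbf{L}_{i_l}\mathbf{U}_{\sigma}^{\dagger} = \mathbf{L}_{i_{\sigma^i(l)}}$, because $\mathbf{U}_{\sigma}$ merely permutes the tensor slots within class $i$; similarly the two-body interaction transforms as $\mathbf{U}_{\sigma}\mathbf{A}_{j_{l'}i_l}\mathbf{U}_{\sigma}^{\dagger} = \mathbf{A}_{j_{\sigma^j(l')}i_{\sigma^i(l)}}$. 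Plugging these identities in, and reindexing the sums $i_l \mapsto i_{\sigma^i(l)}$, $j_{l'} \mapsto j_{\sigma^j(l')}$ (which is a bijection of the index set), one finds that $\widetilde{\boldsymbol{\rho}}^{cN}_t$ solves the same SDE \eqref{cNbelavkin} driven by $\{\widetilde{W}^{i_l}\}$, \emph{provided} the coefficient appearing in front of $\mathbf{A}_{j_{l'}i_l}$ after reindexing is $\xi_{j_{\sigma^j(l')}i_{\sigma^i(l)}}$ rather than $\xi_{j_{l'}i_l}$. This is precisely where the hypothesis $\mathrm{Law}(\xi_{j_{\sigma^j(l')}i_{\sigma^i(l)}}) = \mathrm{Law}(\xi_{j_{l'}i_l})$ enters: it guarantees that the random Hamiltonian $\sum \xi_{j_{\sigma^j(l')}i_{\sigma^i(l)}}\mathbf{A}_{j_{l'}i_l}$ has the same law as $\sum \xi_{j_{l'}i_l}\mathbf{A}_{j_{l'}i_l}$. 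Combined with the hypothesis \eqref{cNSymetry} that $\mathbf{U}_{\sigma}\boldsymbol{\Psi}^{cN}_0$ has the same law as $\boldsymbol{\Psi}^{cN}_0$, the transformed triple (initial condition, coefficients, noise) is equal in law to the original one.

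Finally I would invoke well-posedness (weak uniqueness in law) of the $cN$-body Belavkin equation — which holds since it is a finite-dimensional SDE with, say, bounded $\tilde H$ and Hilbert-Schmidt $L$, so the standard uniqueness theory applies, or one can cite the $N$-body analogue of the well-posedness established for the graphon system — to conclude that $\widetilde{\boldsymbol{\rho}}^{cN}_t = \mathbf{U}_{\sigma}\boldsymbol{\rho}^{cN}_t\mathbf{U}_{\sigma}^{\dagger}$ has the same law as $\boldsymbol{\rho}^{cN}_t$ for all $t$. Since $\mathbf{U}_{\sigma}$ acting by conjugation on the density operator is exactly the operation of permuting the class-wise particle labels on the wave function, this is the asserted class-wise symmetry \eqref{cNSymetry} propagated to time $t$. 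The main obstacle I anticipate is purely bookkeeping: carefully tracking the conjugation of the non-linear measurement terms $\mathbf{L}_{i_l}\boldsymbol{\rho}\mathbf{L}_{i_l}^\dagger - \tfrac12\{\mathbf{L}^\dagger_{i_l}\mathbf{L}_{i_l},\boldsymbol{\rho}\}$ and $\mathbf{L}_{i_l}\boldsymbol{\rho} + \boldsymbol{\rho}\mathbf{L}^\dagger_{i_l} - \tr((\mathbf{L}_{i_l}+\mathbf{L}^\dagger_{i_l})\boldsymbol{\rho})\boldsymbol{\rho}$ under $\boldsymbol{\rho}\mapsto\mathbf{U}_\sigma\boldsymbol{\rho}\mathbf{U}_\sigma^\dagger$ and matching them termwise with the reindexed noise, and — on the probabilistic side — making the "equal in law" statements rigorous given that the coefficients $\xi$ are random and only equal in law (not almost surely), which requires passing to the joint law of $(\boldsymbol{\Psi}_0, (\xi), \mathbf W)$ before applying uniqueness.
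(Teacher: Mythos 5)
Your proposal is correct and is essentially a fully detailed version of the argument the paper gives in one line (the paper simply asserts that the result follows from the class-wise symmetry of the interactions and the i.i.d.\ property of the Wiener processes): conjugation by the class-wise permutation unitary, relabelling of the driving noises, equality in law of the coefficients $\xi$, and uniqueness in law. Your additional care about passing to the joint law of $(\boldsymbol{\Psi}_0,(\xi),\mathbf{W})$ before invoking weak uniqueness is a sound refinement of the same approach rather than a different route.
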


\begin{proof}
The result follows directly from the class-wise symmetry of interactions and the i.i.d property of the Wiener processes. 
\end{proof}
We make the following assumptions to prove the propagation of chaos for the case of block-wise interaction. 
\begin{assumption}
The initial state $\boldsymbol{\rho}_{0}^{cN}$ is pure and the associated wave-function $\boldsymbol{\Psi}_{0}^{cN}$ is symmetric within each class in law. 
\label{assumption1}
\end{assumption}

\begin{assumption}
The step kernel graphon $w^{G_{cN}}$ is symmetric. 
\label{assumption2}
\end{assumption}

Under Assumption \ref{assumption1} and  Assumption \ref{assumption2}, Equation  \eqref{cNbelavkin} describes the dynamics of $c$-class of Bosons. 

\subsection{Limiting system}
In the mean-field limit, the system \eqref{graphonbelavkin} becomes 
{{\small} \begin{align*}
    \mathrm{d}\gamma_{t,u} &= -\mathrm{i}\Big[\tilde{H} + \int_{0}^{1}w(u,v)A^{\mathbb{E}_{\mathbb{P}}[\gamma_{t,v}]}\mathrm{d}v, \gamma_{t,u} \Big]\mathrm{d}t + \Big(L\gamma_{t,u}L^{\dagger} - \frac{1}{2}\{L^{\dagger}L, \gamma_{t,u}\}\Big)\mathrm{d}t\\
&\quad\quad\quad + \Big(L\gamma_{t,u} + \gamma_{t,u}L^{\dagger} - \tr\big((L+L^{\dagger})\gamma_{t,u}\big)\gamma_{t,u}\Big)\mathrm{d}W_t^{u}, \quad \quad \quad u \in [0,1].
\end{align*}}
Thanks to symmetry, for all  $v,v' \in I_{j},\; j \in [c] $ , 
$$\text{Law}(\gamma_{t,v}) = \text{Law}(\gamma_{t,v'}),$$ 
so we can select a representative in each class $j$, denoted by $\gamma_{t,j}$.  Moreover, 
{{\small}
\begin{align*}
  \int_{0}^{1}{w}\Big(\frac{j-1}{c},v\Big)A_{j}^{\mathbb{E}_{\mathbb{P}}[{\gamma_{t,v}}]}\mathrm{d}v 
  &= \sum_{i=1}^{c}\int_{0}^{1}\mathbb{I}_{ \Big\{ \frac{i-1}{c} \leq v < \frac{i}{c} \Big\}}{w}\Big(\frac{j-1}{c},v\Big)A_{j}^{\mathbb{E}_{\mathbb{P}}[{\gamma_{t,v}}]}\mathrm{d}v,\\
  &= \sum_{i=1}^{c} \int_{0}^{1}\mathbb{I}_{ \Big\{ \frac{i-1}{c} \leq v < \frac{i}{c} \Big\}}{w}\Big(\frac{j-1}{c},\frac{i-1}{c}\Big)A_{j}^{\mathbb{E}_{\mathbb{P}}[{\gamma_{t,v}}]}\mathrm{d}v,\\
  &= \sum_{i=1}^{c}\int_{\frac{i-1}{c}}^{\frac{i}{c}}{w}\Big(\frac{j-1}{c},\frac{i-1}{c}\Big)A_{j}^{\mathbb{E}_{\mathbb{P}}[{\gamma_{t,i}}]}\mathrm{d}v,\\
  &= \frac{1}{c}\sum_{i = 1}^{c}{w}\Big(\frac{j-1}{c},\frac{i-1}{c}\Big)A_{j}^{\mathbb{E}_{\mathbb{P}}[{\gamma_{t,i}}]},\\
  %\int_{0}^{1}{w}\Big(\frac{j-1}{c},v\Big)A_{j}^{\mathbb{E}_{\mathbb{P}}[{\gamma_{t,v}}]}\mathrm{d}v 
  &= 
  \frac{1}{c}\sum_{i=1}^{c}w_{ji}A_{j}^{\mathbb{E}_{\mathbb{P}}[{\gamma_{t,i}}]}.
\end{align*}
}

Thus, the graphon system can be written using $c$ representative particles $\gamma_{j}$, with $ j \in [c]$, as
{{\small} \begin{align}
\mathrm{d}\gamma_{t,j} &= -\mathrm{i}\Big[\tilde{H} + \frac{1}{c}\sum_{i=1}^{c}w_{ji}A^{\mathbb{E}_{\mathbb{P}}[\gamma_{t,i}]}, \gamma_{t,j} \Big]\mathrm{d}t + \big(L\gamma_{t,j}L^{\dagger} - \frac{1}{2}\{L^{\dagger}L,\gamma_{t,j}\}\big)\mathrm{d}t\nonumber\\
&\qquad\qquad +\Big(L\gamma_{t,j} + \gamma_{t,j}L^{\dagger} - \tr\big((L+L^{\dagger})\gamma_{t,j})\gamma_{t,j}\Big)\mathrm{d}W_t^{j}, \;\; j \in [c]. \label{classgraphonsys}
\end{align}}

\subsection{Propagation of chaos}
The propagation of chaos can now be stated as follows:
\begin{align*}
\boldsymbol{\rho}_{0}^{cN} = \rho_{0}^{\otimes (cN)} \xrightarrow[]{\text{Propagation of chaos}} \boldsymbol{\rho}_{t}^{cN}  \approx \bigotimes_{j=1}^{c}\bigotimes_{l=1}^{N}\gamma_{t,j_l},
\end{align*}
where for each class, we consider $N$ independent copies, each driven by its own Wiener process. Denote by $\gamma_{j_{l'}}$ the $l'$-th particle in class $j$, whose dynamics follow the SDE:
{{\small} \begin{align*}
\mathrm{d}\gamma_{t,j_{l'}} &= -\mathrm{i}\Big[\tilde{H} + \frac{1}{c}\sum_{i=1}^{c}w_{ji}A^{\mathbb{E}_{\mathbb{P}}[\gamma_{t,i}]}, \gamma_{t,j_{l'}} \Big]\mathrm{d}t + \big(L\gamma_{t,j_{l'}}L^{\dagger} - \frac{1}{2}\{L^{\dagger}L,\gamma_{t,j_{l'}}\}\big)\mathrm{d}t\nonumber\\
&\qquad\qquad +\Big(L\gamma_{t,j_{l'}} + \gamma_{t,j_{l'}}L^{\dagger} - \tr\big((L+L^{\dagger})\gamma_{t,j_{l'}}\big)\gamma_{t,j_{l'}}\Big)\mathrm{d}W_t^{j_{l'}}.
\end{align*}}

To address the mean-field limit, we extend the method introduced by Pickl \cite{pickl11simple} and later adapted to the stochastic setting in \cite{kolokoltsov21law,kolokoltsov22qmfg} to the case of a multi-class bosonic system.

The goal is to introduce a quantity that detects whether $\boldsymbol{\rho}^{cN}_t$ remains tensorized in a “good way’’ at each time. To this end, it suffices to show that the number of “bad’’ particles, namely particles deviating from the desired state $\boldsymbol{\gamma}_{i_l}$, is small.

To quantify the ratio of bad particles, we use a counting operator
that runs through all possible combinations of bad particles in each class. We introduce the operator counting the number of bad particles in class $i$:
\begin{align*}
\mathbf{P}_{i,n_i} &:= \sum_{{{s_k \in \{0,1\}^{N},\sum s_k = n_i}}}\prod_{l=1}^{N}\boldsymbol{g}_{i_l}^{s_k}\boldsymbol{\gamma}_{i_l}^{1-s_k},   
\end{align*}
where $n_{i} \in \{0,\dots,N\}$ and $i \in [c]$. We then define the total bad-particle counting operator as
\begin{align*}
\mathbf{P}_{\hat{n}} &:= \prod_{i=1}^{c}\mathbf{P}_{i,n_i}, \qquad \hat{n} = \begin{pmatrix}
    n_1\\
    \vdots\\
    n_c
\end{pmatrix}.
\end{align*}

We can now define a functional that quantifies the number of bad particles in the system by weighting all possible combinations of bad particles.  

For any sub-additive function $\varpi : \{0,\dots,cN\} \to \mathbb{R}_{+}$, we set
{{\small} \begin{align*}
\mathcal{E}_{N,\varpi} &:= \tr\Big(\Big(\sum_{n=0}^{cN}\varpi(n)\mathbf{P}_{\hat{n}}\Big)\boldsymbol{\rho}^{cN}\Big),
\end{align*}}
which can equivalently be written as
{{\small} \begin{align*}
\mathcal{E}_{N,\varpi} &:= \tr\Big(\Big(\sum_{n_1,\dots,n_c=0}^{N}\varpi(n_1+\dots+n_c)\mathbf{P}_{\hat{n}}\Big)\boldsymbol{\rho}^{cN}\Big).
\end{align*}}

\begin{lemma}
For any sub-additive function $\varpi : \{0,\dots,cN\} \to \mathbb{R}_{+}$ we have
$$ \mathcal{E}_{N,\varpi}(t) \leq \sum_{i=1}^{c}\mathcal{E}_{N,\varpi}^{i}(t), \quad \forall t \geq 0, $$
where $\mathcal{E}_{N,\varpi}^{i} := \tr\Big( \sum_{n_i=0}^{N}\varpi(n_i)\mathbf{P}_{i,n_i}\boldsymbol{\rho}^{cN}\Big)$.

Moreover, if we count in an unbiased way, i.e., $\varpi(k) = \tfrac{k}{cN}$, then
$$\mathbb{E}_{\mathbb{P}}\Big[\mathcal{E}_{N,\varpi}(t)\Big] \leq \frac{1}{c}\sum_{i=1}^{c}\mathbb{E}_{\mathbb{P}}\Big[\mathcal{D}^{i_l}_N(t)\Big], \quad \forall l \in [N],$$
where $\mathcal{D}^{i_l}_{N} := \tr\!\big(\boldsymbol{g}_{i_l}\boldsymbol{\rho}^{cN}\big)$.
\end{lemma}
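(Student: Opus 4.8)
The plan is to read $\{\mathbf P_{i,n_i}\}_{n_i=0}^{N}$ as the spectral projections of the ``number of bad particles in class $i$'' operator and to use that these families commute across classes, so that $\{\mathbf P_{\hat n}\}_{\hat n}$ is a resolution of the identity on $\mathbb H^{cN}$ and every inequality in the statement reduces to an elementary scalar inequality between eigenvalues, stable under the pairing $\tr(\,\cdot\,\boldsymbol\rho^{cN})$ because $\boldsymbol\rho^{cN}\ge 0$. First I would record the structural facts. Since $\eta=1$, purity is preserved along \eqref{classgraphonsys} and along its $N$ independent copies (Proposition~\ref{puritypreservation}), so each $\gamma_{t,i_l}$ is a rank-one projector and $\boldsymbol\gamma_{i_l},\ \boldsymbol g_{i_l}=\mathbf 1-\boldsymbol\gamma_{i_l}$ are complementary orthogonal projectors acting on the single slot $i_l$; in particular $\boldsymbol g_{i_l}^2=\boldsymbol g_{i_l}$ and $\boldsymbol g_{i_l}\boldsymbol\gamma_{i_l}=0$. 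Expanding $\prod_{l=1}^{N}(\boldsymbol g_{i_l}+\boldsymbol\gamma_{i_l})=\mathbf 1$ and grouping the $2^N$ terms by the number of factors equal to $\boldsymbol g$ gives $\sum_{n_i=0}^{N}\mathbf P_{i,n_i}=\mathbf 1$; applying $\sum_{l}\boldsymbol g_{i_l}$ termwise (each $\boldsymbol g_{i_l}$ either fixes or annihilates a given summand of $\mathbf P_{i,n_i}$) yields the number-operator identity $\sum_{n_i=0}^{N}n_i\,\mathbf P_{i,n_i}=\sum_{l=1}^{N}\boldsymbol g_{i_l}$. Finally, since the $\mathbf P_{i,n_i}$ for distinct $i$ act on disjoint tensor slots they commute, $\mathbf P_{\hat n}=\prod_i\mathbf P_{i,n_i}\ge 0$ is an orthogonal projector, and $\sum_{\hat n\,:\,n_i\text{ fixed}}\mathbf P_{\hat n}=\mathbf P_{i,n_i}$.

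For the first inequality, iterating sub-additivity gives $\varpi(n_1+\cdots+n_c)\le\sum_{i=1}^c\varpi(n_i)$ for every $\hat n$. Multiplying by $\mathbf P_{\hat n}\ge 0$ and summing over $\hat n$ yields the operator inequality $\sum_{\hat n}\varpi(n_1+\cdots+n_c)\,\mathbf P_{\hat n}\le\sum_{i=1}^c\sum_{\hat n}\varpi(n_i)\,\mathbf P_{\hat n}$; taking $\tr(\,\cdot\,\boldsymbol\rho^{cN})$ preserves it, and on the right-hand side the marginalization identity collapses the $i$-th summand to $\sum_{n_i=0}^{N}\varpi(n_i)\,\mathbf P_{i,n_i}$, so that $\mathcal E_{N,\varpi}(t)\le\sum_{i=1}^c\mathcal E_{N,\varpi}^i(t)$.

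For the second part, $k\mapsto k/(cN)$ is additive, hence sub-additive and $\mathbb R_+$-valued, so the first inequality applies, and by the number-operator identity $\mathcal E_{N,\varpi}^i=\frac1{cN}\tr\big(\big(\sum_{l=1}^N\boldsymbol g_{i_l}\big)\boldsymbol\rho^{cN}\big)=\frac1{cN}\sum_{l=1}^N\mathcal D^{i_l}_N$. Taking expectations and using that the joint law of $\big(\boldsymbol\rho^{cN}_t,(\gamma_{t,i_l})_{i,l}\big)$ is invariant under permuting the copy index $l$ within each class — which follows from Proposition~\ref{prop:symmetry} together with the fact that the $N$ copies in a given class are i.i.d.\ diffusions driven by the Wiener processes $\{W^{i_l}\}_l$ — gives $\mathbb E[\mathcal D^{i_l}_N(t)]=\mathbb E[\mathcal D^{i_{l'}}_N(t)]$ for all $l,l'\in[N]$. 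Hence $\mathbb E[\mathcal E_{N,\varpi}(t)]\le\frac1{cN}\sum_{i=1}^c N\,\mathbb E[\mathcal D^{i_l}_N(t)]=\frac1c\sum_{i=1}^c\mathbb E[\mathcal D^{i_l}_N(t)]$ for any fixed $l\in[N]$.

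There is no genuine analytic difficulty; the points demanding care are bookkeeping: verifying that each inequality is between commuting positive operators, so that it survives the pairing with the positive operator $\boldsymbol\rho^{cN}$, and justifying the final equality — that averaging over trajectories renders $\mathbb E[\mathcal D^{i_l}_N(t)]$ independent of $l$ — which is precisely the class-wise exchangeability propagated by Proposition~\ref{prop:symmetry} and the within-class exchangeability of the limiting copies.
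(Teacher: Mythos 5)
Your proposal is correct and follows essentially the same route as the paper: iterate sub-additivity to get $\varpi(n_1+\cdots+n_c)\le\sum_i\varpi(n_i)$, use positivity of the $\mathbf P_{\hat n}$ together with the resolution $\sum_{n_j}\mathbf P_{j,n_j}=\mathbf 1$ to collapse the off-class sums, and then use the number-operator identity $\sum_{n_i}n_i\mathbf P_{i,n_i}=\sum_l\boldsymbol g_{i_l}$ plus class-wise exchangeability in law to identify $\mathbb E[\mathcal E^i_{N,\varpi}]$ with $\tfrac1c\mathbb E[\mathcal D^{i_l}_N]$. The only cosmetic difference is that you phrase the first inequality as an operator inequality traced against $\boldsymbol\rho^{cN}\ge 0$, whereas the paper writes it through the pure-state inner product $\langle\boldsymbol\Psi^{cN}_t,\cdot\,\boldsymbol\Psi^{cN}_t\rangle$; these are equivalent.
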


%En gros la sous-additivité signifie qu'il n'est pas utile de compter plusieurs fois à quel point le système est " mauvais " lorsque le nombre total d'erreurs est réparti entre différentes parties. Il est possible de comptabiliser les erreurs par classe (ou par bloc) et de les combiner sans faire exploser le " coût " total.

\begin{proof}
We have
{{\small} \begin{align*}
    \mathcal{E}_{N,\varpi}(t) &= \tr\Big(\Big(\sum_{n_1,\dots,n_c=0}^{N}\varpi({n_1 + \dots + n_c})\mathbf{P}_{\hat{n}}\Big)\boldsymbol{\rho}_{t}^{cN}\Big)\\
    &= \tr\Big(\Big(\sum_{n_1,\dots,n_c=0}^{N}\varpi({n_1 + \dots + n_c})\prod_{j=1}^{c}\mathbf{P}_{{j,n_j}}\Big)\boldsymbol{\rho}_{t}^{cN}\Big)\\
    &= \langle\boldsymbol{\Psi}_{t}^{cN}, \sum_{n_1,\dots,n_c = 0}^{N}\varpi(n_1+\dots+n_c)\prod_{j=1}^{c}\mathbf{P}_{j,n_j}\boldsymbol{\Psi}_{t}^{cN} \rangle\\ 
    &\stackrel{\text{sub-additivity of }\varpi}\leq \langle\boldsymbol{\Psi}_{t}^{cN}, \sum_{n_1,\dots,n_c = 0}^{N}\sum_{i=1}^{c}\Big(\varpi(n_i)\prod_{j=1}^{c}\mathbf{P}_{j,n_j}\Big)\boldsymbol{\Psi}_{t}^{cN} \rangle\\
    &\stackrel{\text{Fubini}}= \langle\boldsymbol{\Psi}_{t}^{cN}, \sum_{i=1}^{c}\sum_{n_1,\dots,n_c = 0}^{N}\Big(\varpi(n_i)\mathbf{P}_{i,n_i}\prod_{j\neq i}^{c}\mathbf{P}_{j,n_j}\Big)\boldsymbol{\Psi}_{t}^{cN} \rangle\\
    &= \langle\boldsymbol{\Psi}_{t}^{cN}, \sum_{i=1}^{c}\Big(\sum_{n_i=0}^{N}\varpi(n_i)\mathbf{P}_{i,n_i}\Big)\Big(\prod_{j\neq i}^{c}\underbrace{\sum_{n_j=0}^{N}\mathbf{P}_{j,n_j}}_{\mathbf{1}}\Big)\boldsymbol{\Psi}_{t}^{cN} \rangle\\
    &= \sum_{i=1}^{c}\langle\boldsymbol{\Psi}_{t}^{cN}, \sum_{n_i=0}^{N}\varpi(n_i)\mathbf{P}_{i,n_i}\boldsymbol{\Psi}_{t}^{cN} \rangle\\
    &:= \sum_{i=1}^{c}\mathcal{E}_{N,\varpi}^{i}(t).
\end{align*}}

In order to prove the second statement, we notice the following identity:
$$ \sum_{l=1}^{N} \boldsymbol{g}_{i_l}\mathbf{P}_{i,n_i} = n_i\mathbf{P}_{i,n_i},$$
 since multiplying the product $\boldsymbol{g}_{i_l}$ 
 by $\prod_{l=1}^{N}\boldsymbol{g}_{i_l}^{s_k}\boldsymbol{\gamma}_{i_l}^{1-s_k}$ is zero unless  if $s_l = 1$, and summing over all $l$ counts the number of indices $l$ such that $s_l = 1$, which is $n_i$. 

Now if $\varpi(k) = \frac{k}{cN}$, we have:
{{\small} 
\begin{align*}
\mathbb{E}_{\mathbb{P}}\Big[\mathcal{E}_{N,\varpi}^{i}(t)\Big] 
&= \sum_{n_i=0}^{N}\mathbb{E}_{\mathbb{P}}\Big[\langle\boldsymbol{\Psi}_{t}^{cN}, \frac{n_i}{cN}\mathbf{P}_{i,n_i}\boldsymbol{\Psi}_{t}^{cN}\rangle\Big],\\
&= \frac{1}{c}\,\mathbb{E}_{\mathbb{P}}\Big[\langle\boldsymbol{\Psi}_{t}^{cN}, \frac{1}{N}\sum_{n_i=0}^{N}{n_i}\mathbf{P}_{i,n_i}\boldsymbol{\Psi}_{t}^{cN}\rangle\Big],\\
&= \frac{1}{c}\,\mathbb{E}_{\mathbb{P}}\Big[\langle\boldsymbol{\Psi}_{t}^{cN}, \frac{1}{N}\sum_{n_i=0}^{N}\sum_{l=1}^{N}{\boldsymbol{g}_{t,i_l}}\mathbf{P}_{i,n_i}\boldsymbol{\Psi}_{t}^{cN}\rangle\Big],\\
&\stackrel{\text{Invariance in law}}{=} \frac{1}{c}\,\mathbb{E}_{\mathbb{P}}\Big[\langle\boldsymbol{\Psi}_{t}^{cN}, \boldsymbol{g}_{t,i_l}\boldsymbol{\Psi}_{t}^{cN}\rangle\Big], \quad \forall l \in [N],\\
&= \frac{1}{c}\mathbb{E}_{\mathbb{P}}\Big[\mathcal{D}_{N}^{i_l}(t)\Big],
\end{align*}
}
which completes the proof.
\end{proof}

The above lemma is crucial because it allows us to compute the number of bad particles class by class, and to work directly with the trace norm. Indeed, thanks to \cite[Lemma~2.3]{knowles10mean}, the quantity $\mathcal{D}_{N}^{i_l}(t)$ provides a control of the trace-norm distance between the reduced state of particle $i_l$ in the $cN$-particle system and the corresponding limiting state in the graphon system. More precisely, one has the bound
$$ \mathcal{D}^{i_l}_{N}(t) \leq 
\big\|\rho_{t,i_l} - \gamma_{t,i_l}\big\|_{1} \leq C\sqrt{\mathcal{D}^{i_l}_{N}(t)},
$$
for some universal constant $C>0$.

We now state some technical lemmas that are useful to establish our result. The following lemmas results from a slight modification of \cite[Lemma A.1]{kolokoltsov22qmfg} and \cite[Lemma 3.10]{knowles10mean}. \begin{lemma}\label{technicallemma} Let $f : \mathbb{N} \to \mathbb{R}$ be a function and define the operator $ \hat{f}_{i} = \sum_{k}f(k)\mathbf{P}_{i,k}$, where $\mathbf{P}_{i,k}$ is an operator counting the number of bad particles in class $i$.

Consider two products $\boldsymbol{Q}_{r_i,r_j}^{(a)}$, $\boldsymbol{Q}_{r_i,r_j}^{(b)}$ defined in the following way: \begin{align*} \boldsymbol{Q}_{r_i,r_j}^{(a)} :=\Big(\prod_{l=1}^{r_i}\boldsymbol{p}_{i_l}^{(a)}\Big)\Big(\prod_{{l'}=1}^{r_j}\boldsymbol{p}_{j_{l'}}^{(a)}\Big), \ \boldsymbol{Q}_{r_i,r_j}^{(b)} :=\Big(\prod_{l=1}^{r_i}\boldsymbol{p}_{i_l}^{(b)}\Big)\Big(\prod_{{l'}=1}^{r_j}\boldsymbol{p}_{j_{l'}}^{(b)}\Big), \end{align*} where each $\boldsymbol{p}_{i_l}^{\bullet}$ (resp. $\boldsymbol{p}_{j_{l'}}^{\bullet}$) belongs to $\{\boldsymbol{\gamma}_{i_l},\boldsymbol{g}_{i_l}\}$ (resp. $\{\boldsymbol{\gamma}_{j_{l'}},\boldsymbol{g}_{j_{l'}}\}$). Define \begin{align*} n_{i}^{\bullet } := \text{Card}\Big\{ l \Big| \boldsymbol{p}_{i_l}^{\bullet} = \boldsymbol{g}_{i_l} \Big\}, \ n_{j}^{\bullet } := \text{Card}\Big\{ l \Big| \boldsymbol{p}_{j_l}^{\bullet} = \boldsymbol{g}_{j_l} \Big\}. \end{align*} For any operator $\mathbf{R}$ on $L^{2}_{\mathbb{C}}(\mathfrak{X}^{N}\times\mathfrak{X}^{N})$ acting only on the $r_i$ first variables of the first class and $r_j$ first variables of the second class, we have \begin{align*} \boldsymbol{Q}_{r_i,r_j}^{(a)}\mathbf{R}\hat{f}_{i}\hat{f}_{j}\boldsymbol{Q}_{r_i,r_j}^{(b)} &= \boldsymbol{Q}_{r_i,r_j}^{(a)}\widehat{\tau_{n_i}f_{i}}\widehat{\tau_{n_j}f_{j}}\mathbf{R}\boldsymbol{Q}_{r_i,r_j}^{(b)}, \end{align*} where $\tau_{n_i}$(resp. $\tau_{n_j}$) is shift function $\tau_{r}f(k) = f(k+r)$ and $n_i = n_i^{(b)} - n_{i}^{(a)}$(resp. $n_j = n_{j}^{(b)} - n_{j}^{(a)}$). \end{lemma}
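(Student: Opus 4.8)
The plan is to reduce the identity to three elementary structural facts plus a careful split of the tensor factors of each class into a \emph{front} block (the first $r_i$ particles of class $i$ and the first $r_j$ of class $j$, on which $\mathbf{R}$ and the operators $\boldsymbol{Q}^{(a)},\boldsymbol{Q}^{(b)}$ act) and a \emph{back} block (the remaining particles). First, all the single-particle operators $\boldsymbol{\gamma}_{i_l}$ and $\boldsymbol{g}_{i_l}=\mathbf{1}-\boldsymbol{\gamma}_{i_l}$ act on pairwise distinct tensor factors, hence commute, and $\boldsymbol{\gamma}_{i_l},\boldsymbol{g}_{i_l}$ are complementary orthogonal projections ($\boldsymbol{\gamma}_{i_l}\boldsymbol{g}_{i_l}=0$, $\boldsymbol{g}_{i_l}^2=\boldsymbol{g}_{i_l}$). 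Thus $\mathbf{P}_{i,k}$ factorises according to how the $k$ bad particles split between the two blocks: writing $\mathbf{F}^{(i)}_{m}$ for the sum of products of the $r_i$ front projectors carrying exactly $m$ factors $\boldsymbol{g}$, and $\Pi^{(i)}_{m}$ for the projection onto ``exactly $m$ bad particles among the back ones of class $i$'', one has $\mathbf{P}_{i,k}=\sum_{m}\mathbf{F}^{(i)}_{k-m}\Pi^{(i)}_{m}$, with the conventions $\mathbf{P}_{i,k}=0$ for $k\notin\{0,\dots,N\}$ and $\Pi^{(i)}_m=0$ for $m\notin\{0,\dots,N-r_i\}$. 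Second, $\mathbf{R}$ acts only on front variables, so it commutes with every $\Pi^{(i)}_{m}$ and $\Pi^{(j)}_{m}$ (it does \emph{not} commute with $\boldsymbol{Q}^{(a)},\boldsymbol{Q}^{(b)}$, which is exactly why the statement is not a triviality). Third, since the class-$i$ part of $\boldsymbol{Q}^{(b)}$ is a single front configuration with $n_i^{(b)}$ factors $\boldsymbol{g}$, orthogonality of $\boldsymbol{\gamma}$ and $\boldsymbol{g}$ annihilates every mismatched term, giving $\mathbf{F}^{(i)}_{m}\boldsymbol{Q}^{(b)}=\mathbb{I}_{\{m=n_i^{(b)}\}}\boldsymbol{Q}^{(b)}$, and symmetrically $\boldsymbol{Q}^{(a)}\mathbf{F}^{(i)}_{m}=\mathbb{I}_{\{m=n_i^{(a)}\}}\boldsymbol{Q}^{(a)}$; the same holds for class $j$.

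Combining the first and third facts, and using that $\mathbf{F}^{(i)}_{k-m}$ commutes with $\Pi^{(i)}_m$ and with the class-$j$ part of $\boldsymbol{Q}^{(b)}$, one gets $\hat{f}_i\boldsymbol{Q}^{(b)}=\sum_k f(k)\sum_m \Pi^{(i)}_m\mathbf{F}^{(i)}_{k-m}\boldsymbol{Q}^{(b)}=\big(\sum_m f(m+n_i^{(b)})\Pi^{(i)}_m\big)\boldsymbol{Q}^{(b)}$; that is, as far as $\boldsymbol{Q}^{(b)}$ is concerned, $\hat f_i$ is replaced by a function of the back number operator of class $i$ with its argument shifted by $n_i^{(b)}$. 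The same applies to $\hat f_j$, and the two replacements act on disjoint factors, so the left-hand side equals $\boldsymbol{Q}^{(a)}\mathbf{R}\big(\sum_m f(m+n_i^{(b)})\Pi^{(i)}_m\big)\big(\sum_{m'} f(m'+n_j^{(b)})\Pi^{(j)}_{m'}\big)\boldsymbol{Q}^{(b)}$. The second fact lets the two back-operators slide through $\mathbf{R}$ to its left. Finally, the reindexed third fact $\boldsymbol{Q}^{(a)}\mathbf{P}_{i,m+n_i^{(a)}}=\boldsymbol{Q}^{(a)}\Pi^{(i)}_m$ (immediate from $\mathbf{P}_{i,k}=\sum_{m'}\mathbf{F}^{(i)}_{k-m'}\Pi^{(i)}_{m'}$ and $\boldsymbol{Q}^{(a)}\mathbf{F}^{(i)}_{c}=\mathbb{I}_{\{c=n_i^{(a)}\}}\boldsymbol{Q}^{(a)}$) converts each back-operator back into the $\mathbf{P}$-form with net shift $n_i=n_i^{(b)}-n_i^{(a)}$: $\boldsymbol{Q}^{(a)}\sum_m f(m+n_i^{(b)})\Pi^{(i)}_m=\boldsymbol{Q}^{(a)}\sum_k f(k+n_i)\mathbf{P}_{i,k}=\boldsymbol{Q}^{(a)}\widehat{\tau_{n_i}f_i}$, and likewise for $j$. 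This produces exactly $\boldsymbol{Q}^{(a)}\widehat{\tau_{n_i}f_i}\,\widehat{\tau_{n_j}f_j}\,\mathbf{R}\,\boldsymbol{Q}^{(b)}$.

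Equivalently, in functional-calculus language: with $\hat N^{\mathrm{bad}}_i:=\sum_{l=1}^{N}\boldsymbol{g}_{i_l}=\hat N^{\mathrm{front}}_i+\hat N^{\mathrm{back}}_i$ one has $\hat f_i=f(\hat N^{\mathrm{bad}}_i)$, and the entire proof is the observation that $\boldsymbol{Q}^{(a)}\hat N^{\mathrm{front}}_i=n_i^{(a)}\boldsymbol{Q}^{(a)}$, $\hat N^{\mathrm{front}}_i\boldsymbol{Q}^{(b)}=n_i^{(b)}\boldsymbol{Q}^{(b)}$, $[\mathbf{R},\hat N^{\mathrm{back}}_i]=0$, together with the fact that all these pieces commute, so the front part may be replaced by the appropriate scalar on either side; the net effect is precisely the index shift by $n_i=n_i^{(b)}-n_i^{(a)}$.

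I expect the only genuine work here to be bookkeeping: correctly isolating the front and back factors of each class, handling the boundary conventions on the indices so that the reindexations are legitimate, and keeping track of the net shift $n=n^{(b)}-n^{(a)}$ class by class. There is no analytic content, everything taking place in the finite algebra generated by the commuting projections $\boldsymbol{\gamma}_{i_l},\boldsymbol{g}_{i_l}$; the argument is exactly the one underlying the cited lemmas \cite{knowles10mean,kolokoltsov22qmfg}, transcribed to the two-class bosonic situation.
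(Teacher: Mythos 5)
Your proof is correct and is essentially the argument the paper itself relies on: the paper gives no written proof, only citing \cite[Lemma 3.10]{knowles10mean} and \cite[Lemma A.1]{kolokoltsov22qmfg} for a ``slight modification'', and your front/back factorisation together with the orthogonality and idempotence of $\boldsymbol{\gamma}_{i_l},\boldsymbol{g}_{i_l}$ (valid here because $\eta=1$ keeps the limiting states pure, hence rank-one projections) is exactly the standard mechanism behind those cited lemmas, transcribed to the two-class setting. The only points worth keeping explicit are the out-of-range conventions ($\mathbf{P}_{i,k}=0$ and your $\Pi^{(i)}_m=0$ outside their natural index ranges), which make the reindexation legitimate even when $n_i<0$ — and you already flag this.
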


\begin{lemma}[\cite{knowles10mean}]
Let  $\gamma_{1}, \gamma_{2}$ be operators on $L_{\mathbb{C}}^{2}(\mathfrak{X}^2,\mu^{\otimes 2})$ and let $A$ be a Hilbert-Schmidt operator on 
$L^2_{\mathbb{C}}(\mathfrak{X}^{2},\mu^{\otimes 2})$ with kernel $a$ satisfying the following proprieties:
\begin{align*}
\|a\|^{2}_{2} &= \int_{\mathfrak{X}^4}|a(x,y;x',y')|^{2}\mu(\mathrm{d}x)\mu(\mathrm{d}y)\mu(\mathrm{d}x')\mu(\mathrm{d}y') < \infty,\\
a(x,y;x',y') &= a(y,x;y',x'), \quad a(x,y;x',y') = \overline{a(x',y';x,y)}.
\end{align*}
Then, 
$$ \gamma_{2}A_{12}\gamma_{2} = \gamma_{2}A_{1}^{\gamma_{2}},$$
where $\gamma_{2}$ is viewed as an operator acting on the second variable.
\end{lemma}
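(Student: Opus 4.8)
The plan is to prove this purely algebraic identity by unfolding the definitions on the two‑particle space $L^2_{\mathbb C}(\mathfrak X^2,\mu^{\otimes 2})$ and computing with integral kernels, using only the two symmetry properties of $a$ and the fact that $\gamma_2$ is a density operator — more precisely, in the setting where the lemma is used (perfect efficiency $\eta=1$, so that by Proposition~\ref{puritypreservation} the limiting one‑particle states are pure) a rank‑one projector. I write $\gamma_2$ both for the one‑particle operator and for its lift $\mathbf 1\otimes\gamma_2$; $A_{12}$ for the two‑particle operator with kernel $a$; and $A_1^{\gamma_2}=A^{\gamma_2}\otimes\mathbf 1$ for the lift to the first factor of the one‑particle operator $A^{\gamma_2}$ of \eqref{meanfield}.

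First I would write $\gamma_2=\ket{\phi}\bra{\phi}$, lifted to the second factor, and split $\gamma_2 A_{12}\gamma_2=\ket{\phi}_2\,B_1\,\bra{\phi}_2$ with $B:=\bra{\phi}_2 A_{12}\ket{\phi}_2$ a one‑particle operator acting on the \emph{first} factor. Because $B_1=B\otimes\mathbf 1$ and $\gamma_2=\mathbf 1\otimes\ket{\phi}\bra{\phi}$ act on disjoint tensor factors, they commute, so $\ket{\phi}_2 B_1\bra{\phi}_2=B_1\,\ket{\phi}_2\bra{\phi}_2=\gamma_2 B_1$; thus $\gamma_2 A_{12}\gamma_2=\gamma_2 B_1$, and the whole statement reduces to identifying $B$ with $A^{\gamma_2}$.

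Next I would check $B=A^{\gamma_2}$ at the level of kernels. One has \[ B(x,y)=\int_{\mathfrak X^2}\overline{\phi(s)}\,a(x,s;y,t)\,\phi(t)\,\mathrm{d}\mu(s)\mathrm{d}\mu(t), \] while by \eqref{meanfield} and $\overline{\gamma_2(x',y')}=\overline{\phi(x')}\phi(y')$, \[ A^{\gamma_2}(x,y)=\int_{\mathfrak X^2}a(x,y;x',y')\,\overline{\phi(x')}\phi(y')\,\mathrm{d}\mu(x')\mathrm{d}\mu(y'). \] Relabelling the integration variables and using the exchange symmetry $a(x,y;x',y')=a(y,x;y',x')$ — together, if the convention adopted for ``$\gamma_2$ acting on the second variable'' requires it, with the Hermitian symmetry $a(x,y;x',y')=\overline{a(x',y';x,y)}$ and $\gamma_2=\gamma_2^{\dagger}$ — lines up the pair of arguments of $a$ contracted against $\overline\phi,\phi$ in the two expressions. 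An equivalent route avoiding the splitting: composing kernels, $\gamma_2 A_{12}\gamma_2$ has kernel $(x_1,x_2;y_1,y_2)\mapsto\int_{\mathfrak X^2}\gamma_2(x_2,s)\,a(x_1,s;y_1,t)\,\gamma_2(t,y_2)\,\mathrm{d}\mu(s)\mathrm{d}\mu(t)$, and $\gamma_2 A_1^{\gamma_2}=A^{\gamma_2}\otimes\gamma_2$ has the factorised kernel $A^{\gamma_2}(x_1,y_1)\,\gamma_2(x_2,y_2)$; the rank‑one relation $\gamma_2(x_2,s)\gamma_2(t,y_2)=\gamma_2(x_2,y_2)\,\overline{\phi(s)}\phi(t)$ pulls the factor $\gamma_2(x_2,y_2)$ out of the integral, reducing the claim once more to $B=A^{\gamma_2}$.

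The main obstacle is not analytic but bookkeeping: keeping track of which of the four arguments of the symmetric kernel $a$ is contracted against $\gamma_2$, and of the convention by which a one‑particle operator is lifted to a prescribed tensor factor, so that the kernel of $B$ matches that of $A^{\gamma_2}$ coming from \eqref{meanfield}. It is also worth recording the structural point that the factorisation in the first step (equivalently, the ``pulling out'' in the alternative route) uses that $\gamma_2$ is rank‑one: for a general one‑particle operator the displayed kernel of $\gamma_2 A_{12}\gamma_2$ does not factor through $\gamma_2(x_2,y_2)$, which is precisely why the reduction is available in the perfect‑efficiency regime.
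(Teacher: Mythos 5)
Your argument is correct in the only regime where the paper actually uses this lemma, and it takes a genuinely different route from the paper's proof. The paper argues by a bare kernel computation: it unfolds $(\gamma_2 A_{12}\gamma_2)f(x_1,x_2)$, applies Fubini, rewrites $\gamma(z_2,x_2')=\overline{\gamma(x_2',z_2)}$ by self-adjointness, and then identifies the resulting kernel $\int\gamma(x_2,y_2)\,a(x_1,y_2;x_1',z_2)\,\overline{\gamma(x_2',z_2)}\,\mu(\mathrm{d}y_2)\mu(\mathrm{d}z_2)$ with that of $\gamma_2A_1^{\gamma_2}$; rank-one-ness is never invoked. You instead write $\gamma_2=\ket{\phi}\bra{\phi}$, sandwich to get $B=\bra{\phi}_2A_{12}\ket{\phi}_2$, and commute the lifted factors. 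The payoff of your version is exactly the point you flag at the end: if $A_1^{\gamma_2}$ is the lift of a one-particle operator (which is how the identity is used in the estimates of $(\triangle_{1}^{j,i_l})$ and $(\triangle_{2}^{j,i_l})$), the kernel above factorizes through $\gamma(x_2,x_2')$ only when $\gamma$ is a rank-one projector; for mixed $\gamma$ the stated identity fails (take $\gamma=\tfrac12\mathbf{1}$ and $A=\sigma_z\otimes\sigma_z$ on $\mathbb{C}^2\otimes\mathbb{C}^2$: the left-hand side is $\tfrac14\,\sigma_z\otimes\sigma_z$, while $A^{\gamma}=\tfrac12\,\sigma_z\,\tr(\sigma_z)=0$). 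So your purity hypothesis (legitimate here by $\eta=1$ and Proposition~\ref{puritypreservation}, since the limiting states in Section~\ref{sec:Chaos} are pure) is not cosmetic: it is precisely what the paper's final identification uses implicitly, and your proof makes that visible where the paper's does not.

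One caution on your bookkeeping step. The identification $B=A^{\gamma_2}$ cannot be obtained from the literal formula \eqref{meanfield} by the two listed symmetries of $a$: the exchange and Hermitian symmetries only permute the argument pairs $\{2,4\}\leftrightarrow\{1,3\}$ (up to conjugation), so they cannot convert the contraction over the primed pair $(x',y')$ appearing in \eqref{meanfield} into the contraction over the second-particle slots that defines $B$. The object for which the identity holds — and the one consistent with Lemma~\ref{mfoperatornorm} and with the way the mean-field term enters the limiting equation — is the effective (partial-trace) operator $A^{\gamma_2}=\tr_{2}\bigl(A(\mathbf{1}\otimes\gamma_2)\bigr)$, with kernel $\int a(x_1,y;x_1',y')\,\overline{\gamma_2(y,y')}\,\mu(\mathrm{d}y)\mu(\mathrm{d}y')$, which for $\gamma_2=\ket{\phi}\bra{\phi}$ is exactly your $B$. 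Read \eqref{meanfield} in that sense and your identification stands; this is a defect of the displayed formula rather than a gap in your argument.
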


\begin{proof}
For any function $f \in L_{\mathbb{C}}^{2}({\mathfrak{X}\times\mathfrak{X}})$,
{{\small} \begin{align*}  (\gamma_{2}&A_{12}\gamma_{2})f(x_1,x_2) =
 \int_{\mathfrak{X}}\gamma_{2}(x_2,y_2)(A_{12}\gamma_{2}f)(x_1,y_2)\mu(\mathrm{d}y_2)\\
&= \int_{\mathfrak{X}}\gamma_{2}(x_2,y_2)\int_{\mathfrak{X}^2}a(x_1,y_2;x_1',z_2)(\gamma_{2}f)(x_1',z_2)\mu(\mathrm{d}x_1')\mu(\mathrm{d}z_2)\mu(\mathrm{d}y_2)\\
&=\int_{\mathfrak{X}^3}\gamma_{2}(x_2,y_2)a(x_1,y_2,x_1',z_2)\int_{\mathfrak{X}}\gamma_{2}(z_{2},x_2')f(x_1',x_2')\mu(\mathrm{d}x_2')\mu(\mathrm{d}x_1')\mu(\mathrm{d}z_2)\mu(\mathrm{d}y_2)\\
&=\int_{\mathfrak{X}^4}\gamma(x_{2},y_{2})a(x_{1},y_{2};x'_{1},z_{2}){\gamma(z_{2},x'_{2})}f(x'_{1},x'_{2})\mu(\mathrm{d}x'_{1})\mu(\mathrm{d}x_{2}')\mu(\mathrm{d}y_{2})\mu(\mathrm{d}z_{2})\\
&\stackrel{\text{Fubini}}= \int_{\mathfrak{X}^2}\Big(\int_{\mathfrak{X}^2}\gamma(x_{2},y_{2})a(x_{1},y_{2};x'_{1},z_{2}){\gamma(z_{2},x'_{2})}\mu(\mathrm{d}y_{2})\mu(\mathrm{d}z_{2})\Big)f(x_{1}',x_{2}')\mu(\mathrm{d}x_{1}')\mu(\mathrm{d}x_{2}')\\
&= \int_{\mathfrak{X}^2}\Big(\int_{\mathfrak{X}^2}\gamma(x_{2},y_{2})a(x_{1},y_{2};x'_{1},z_{2})\overline{\gamma(x'_{2},z_{2})}\mu(\mathrm{d}y_{2})\mu(\mathrm{d}z_{2})\Big)f(x_{1}',x_{2}')\mu(\mathrm{d}x_{1}')\mu(\mathrm{d}x_{2}')\\
&= \int_{\mathfrak{X}^2}\gamma_{2}A_{1}^{\gamma_{2}}(x_{1},x_{2};x_{1}',x_{2}')f(x_1',x_2')\mu(\mathrm{d}x'_1)\mu(\mathrm{d}x_2')\\
&= (\gamma_{2}A_{1}^{\gamma_2})f(x_1,x_2),
\end{align*}}
and the proof follows.
\end{proof}

{The following theorem establishes the propagation of chaos.}

\begin{theorem}[Propagation of chaos estimate]
Under Assumptions \eqref{assumption1}-\eqref{assumption2} and with $\hat{f}(k) = \frac{k}{cN} $, we have
\begin{align*}
\mathbb{E}_{\mathbb{P}}[\mathcal{E}_{N,\hat{f}}(t)] &\leq e^{Ct}\Big(\mathcal{E}_{N,\hat{f}}(0) + \|{w}^{G_{cN}} - {w}\|_{\square} + \frac{1}{\sqrt{N}}\Big),
\end{align*}
where $C  = C(\|a\|_2,\|L\|,c)$.
\end{theorem}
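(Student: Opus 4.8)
The plan is to differentiate the functional $t\mapsto\mathbb{E}[\mathcal{E}_{N,\hat f}(t)]$ along the coupled dynamics of the $cN$-body Belavkin equation \eqref{cNbelavkin} and the $cN$ independent copies of the limiting equation \eqref{classgraphonsys}, and to control the resulting drift by $C\,\mathbb{E}[\mathcal{E}_{N,\hat f}(t)]$ plus the two error terms $\|w^{G_{cN}}-w\|_\square$ and $N^{-1/2}$, after which Gr\"onwall's lemma yields the claim. Concretely, I would first write the joint state of the $cN$-body system together with the reference copies on the common filtered probability space carrying the family $\{W^{i_l}\}$, note that $\mathcal{E}_{N,\hat f}(t)=\tr\big(\widehat{\mathbf{P}}_t\,\boldsymbol{\rho}^{cN}_t\big)$ where $\widehat{\mathbf{P}}_t:=\sum_{\hat n}\hat f(n_1+\dots+n_c)\mathbf{P}_{\hat n}$ depends on time through the projectors $\boldsymbol{\gamma}_{t,i_l}=|\psi_{t,i_l}\rangle\langle\psi_{t,i_l}|$, and apply the It\^o product rule. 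This produces three groups of terms: (i) the generator of $\boldsymbol{\rho}^{cN}_t$ acting on a fixed counting operator, (ii) the time-derivative of $\widehat{\mathbf{P}}_t$ from the copies' dynamics paired against $\boldsymbol{\rho}^{cN}_t$, and (iii) the It\^o cross-variation between the two, which survives precisely because all the $\mathrm{d}W^{i_l}_t$ are shared between the two systems.

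\textbf{Key steps.} The free Hamiltonian part $\sum \mathbf{\tilde H}_{i_l}$ commutes (up to the usual telescoping) with the symmetric counting operators, and the same is true for the dissipative Lindblad part and the diffusive correction, by the class-wise symmetry of Assumption~\ref{assumption1} and the structure of $\mathbf{P}_{i,n_i}$; these contributions either cancel against the corresponding terms coming from the copies or are bounded by $C\,\mathbb{E}[\mathcal{E}_{N,\hat f}(t)]$ using $\|L\|$ and the shift-function identity of Lemma~\ref{technicallemma}. The genuinely interacting term is the commutator with $\frac{1}{2cN}\sum_{i_l,j_{l'}}\xi_{j_{l'}i_l}\mathbf{A}_{j_{l'}i_l}$ on the many-body side versus $\frac1c\sum_i w_{ji}A^{\mathbb{E}[\gamma_{t,i}]}$ on the reference side. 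Here I would split the difference into three pieces: replace the empirical average $\frac1N\sum_{l'}\mathbf{A}_{j_{l'}i_l}$ acting on a good reference particle $j_{l'}$ by $A^{\gamma_{t,j}}_{i_l}$ using the Knowles-type identity $\gamma_{2}A_{12}\gamma_2=\gamma_2 A_1^{\gamma_2}$ (Lemma from \cite{knowles10mean}), which costs a term controlled by the density of bad particles, i.e.\ by $\mathcal{E}_{N,\hat f}(t)$ and a fluctuation of order $N^{-1/2}$ coming from the i.i.d.\ copies; replace $\gamma_{t,j}$ by $\mathbb{E}[\gamma_{t,j}]$, another $O(N^{-1/2})$ fluctuation by independence and boundedness; and replace the discrete weights $\frac{1}{cN}\sum_{j_{l'}}\xi_{j_{l'}i_l}$ by $\frac1c\sum_i w_{ji}$, which is exactly the quantity controlled by the cut norm $\|w^{G_{cN}}-w\|_\square$ via the equivalence \eqref{lovasz} (and, in the Bernoulli construction, an additional concentration estimate for $\xi^{cN}_{j_{l'}i_l}-w^{G_{cN}}(\cdot,\cdot)$ that is again $O(N^{-1/2})$). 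The cross-variation terms (iii) are handled the same way: each $\mathrm{d}W^{i_l}$-coefficient on the many-body side is $(\mathbf{L}_{i_l}+\mathbf{L}_{i_l}^\dagger-\tr(\cdots))\boldsymbol{\rho}^{cN}_t$, pairs against the derivative of $\boldsymbol{\gamma}_{t,i_l}$ in the same direction, and by symmetry and Lemma~\ref{technicallemma} contributes a bounded multiple of $\mathbb{E}[\mathcal{E}_{N,\hat f}(t)]$.

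\textbf{Main obstacle.} The delicate point is the interaction term: one must carefully exploit the projector structure $\mathbf{P}_{i,n_i}$ together with Lemma~\ref{technicallemma} so that inserting $A^{\bullet}$ between good particles produces \emph{only} errors that either (a) carry an extra factor $\boldsymbol{g}_{i_l}$ or $\boldsymbol{g}_{j_{l'}}$, hence are absorbed into $\mathcal{E}_{N,\hat f}$, or (b) are mean-zero fluctuations over the $N$ independent copies that contribute $O(N^{-1/2})$ after taking expectations and using $\mathrm{Var}\big(\frac1N\sum_{l'}X_{l'}\big)=O(1/N)$. Getting the combinatorics right — that is, showing that the weighted sum over $\hat n$ of the ``transported'' counting operators telescopes so that only neighboring values $n_i,n_i\pm1$ appear and the sub-additivity of $\hat f(k)=k/(cN)$ turns these into a net nonpositive plus $O(\mathcal{E}_{N,\hat f})$ contribution — is the heart of Pickl's method and the step most likely to require care in the multi-class setting. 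Once the differential inequality $\frac{\mathrm d}{\mathrm dt}\mathbb{E}[\mathcal{E}_{N,\hat f}(t)]\le C\,\mathbb{E}[\mathcal{E}_{N,\hat f}(t)]+C\big(\|w^{G_{cN}}-w\|_\square+N^{-1/2}\big)$ is established with $C=C(\|a\|_2,\|L\|,c)$, Gr\"onwall's lemma closes the proof.
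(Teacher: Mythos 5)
Your plan is recognisably Pickl-type and shares the paper's three error sources (cut-norm discrepancy, law-of-large-numbers fluctuation of order $N^{-1/2}$, and terms absorbed into the bad-particle functional), but it diverges from the paper at the decisive organisational step, and it is exactly there that your sketch has a genuine gap. You propose to apply It\^o's product rule to $\tr\big(\widehat{\mathbf{P}}_t\,\boldsymbol{\rho}^{cN}_t\big)$ with the full time-dependent weighted counting operator $\widehat{\mathbf{P}}_t$, and you yourself flag the multi-class telescoping/shift combinatorics of this differentiation as "the step most likely to require care" — but that step is precisely the heart of the argument, and leaving it unresolved means the differential inequality $\frac{\mathrm d}{\mathrm dt}\mathbb{E}[\mathcal{E}_{N,\hat f}(t)]\le C\,\mathbb{E}[\mathcal{E}_{N,\hat f}(t)]+C(\|w^{G_{cN}}-w\|_\square+N^{-1/2})$ is asserted rather than established. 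The paper never differentiates $\widehat{\mathbf{P}}_t$ at all: the lemma proved just before the theorem shows that for the unbiased weight $\hat f(k)=k/(cN)$ and class-wise exchangeable states one has $\mathbb{E}[\mathcal{E}_{N,\hat f}(t)]\le \frac1c\sum_{j}\mathbb{E}[\mathcal{D}^{j_l}_N(t)]$ with $\mathcal{D}^{j_l}_N(t)=\tr(\boldsymbol{g}_{t,j_l}\boldsymbol{\rho}^{cN}_t)$ (in fact equality in expectation, via $\sum_l\boldsymbol{g}_{i_l}\mathbf{P}_{i,n_i}=n_i\mathbf{P}_{i,n_i}$), so the whole Gr\"onwall argument is run on the one-particle quantity $\mathcal{D}_N(t)$; the weighted operators $\hat f,\hat h$ reappear only locally, through Lemma~\ref{technicallemma}, to bound one specific cross term (the paper's $\triangle_2$). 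If you insist on working with $\mathcal{E}_{N,\hat f}$ directly, you also need the reverse comparison to close Gr\"onwall, since the natural bounds on the interaction and measurement errors come out in terms of one-particle quantities of type $\mathcal{D}_N$; this is available for the unbiased weight but is not addressed in your sketch.

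Two further points where your outline is thinner than what is actually needed. First, the replacement of $\frac{1}{cN}\sum_{i_l}\xi_{j i_l}$ by $\frac1c\sum_i w_{ji}$ is not controlled by the cut norm for a single tagged particle $j_1$: the paper must first use class-wise invariance in law to rewrite the single-particle quantity as the average over all $j_{l'}$ in the class, and only after this double averaging does $\|\mathcal{T}_{w^{G_{cN}}-w}\|_{\mathrm{op}}\le 4\|w^{G_{cN}}-w\|_\square$ (Equation~\eqref{lovasz}) apply; your sketch skips this averaging. Second, the reduction of the interaction error to "terms carrying an extra $\boldsymbol{g}$ or mean-zero fluctuations" is not automatic: the paper needs the explicit insertion of $\boldsymbol{g}+\boldsymbol{\gamma}=\mathbf{1}$ on both sides, the pairwise cancellations among the resulting sixteen terms, the exact identity $\boldsymbol{\gamma}_{i_l}A_{ji_l}\boldsymbol{\gamma}_{i_l}=\boldsymbol{\gamma}_{i_l}A_j^{\gamma_{i_l}}$ (which kills $\triangle_1$), and the weighted-operator trick with $\hat h,\hat h^{-1}$ plus Lemma~\ref{technicallemma} for $\triangle_2$ — the term with two $\boldsymbol{g}$'s on one side only, which cannot be absorbed by Cauchy--Schwarz alone. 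So while your overall route (direct differentiation of the counting functional, in the spirit of the original Pickl/Knowles argument) could in principle be carried out, as written it omits the reduction lemma that the paper relies on and leaves unproved exactly the combinatorial and operator-algebraic steps that constitute the proof.
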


\begin{proof}
We proceed “à la Gr\"onwall''. Recall that in unbiased counting, we have
$$ \mathbb{E}_{\mathbb{P}}[\mathcal{E}_{N,\hat{f}}(t)]\leq \frac{1}{c}\sum_{j=1}^{c}\mathbb{E}_{\mathbb{P}}[\mathcal{D}^{j_l}_{N}(t)], \quad \forall l \in [N].$$

Set $\mathcal{D}_{N}(t) := \sum_{i=1}^{c}\mathcal{D}_N^{j}(t)$. By It\^o formula, we obtain
\begin{align*}
\mathrm{d}\mathcal{D}_{N}^{j}(t) &= -\tr\Big(\boldsymbol{\rho}_{t}^{cN}\mathrm{d}\boldsymbol{\gamma}_{t,j}\Big) - \tr\Big(\boldsymbol{\gamma}_{t,j}\mathrm{d}\boldsymbol{\rho}_{t}^{cN}\Big) - \tr(\mathrm{d}\boldsymbol{\rho}_{t}^{cN}\mathrm{d}\boldsymbol{\gamma}_{t,j}),\\
&= (\mathcal{P}_{1}^{j} + \mathcal{P}_{2}^{j})\mathrm{d}t + \sum_{i=1}^{c}\sum_{l=1}^{N}\mathcal{P}_{3}^{i_l}\mathrm{d}W_{t}^{i_l},
\end{align*}
where $\mathcal{P}_{1}^{j}$ collects the drift terms that are independent of $L_{j} := L_{j_1}$ and $\mathcal{P}_{2}^{j}$ collects the drift terms that depend on $L_{j}$.

The estimates used to bound the quantity $|\mathcal{P}_{2}^{j}|$ are found in the lemmas of  \cite[Appendix A]{kolokoltsov21law}, and the extension for all density operators provided in \cite{chalal23mean}. We have
{{\small} \begin{align*}
\mathcal{P}_{2}^{j} \!&=\! \tr\Big(\Big(\tfrac{1}{2}L_{j}^{\dagger}L_{j}\boldsymbol{\rho}_{t}^{cN}\!+\!\tfrac{1}{2}\boldsymbol{\rho}_{t}^{cN}L_{j}^{\dagger}L_{j}\! - \!L_{j}\boldsymbol{\rho}_{t}^{cN}L_{j}^{\dagger}\Big)\boldsymbol{\gamma_{t,j}}\! +\! \boldsymbol{\rho}_{t}^{cN}\Big(\tfrac{1}{2}L_{j}^{\dagger}L_{j}\boldsymbol{\gamma}_{t,j}\!+\!\tfrac{1}{2}\boldsymbol{\gamma}_{t,j}L^{\dagger}_{j}L_{j} \!-\! L_{j}\boldsymbol{\gamma}_{t,j}L_{j}^{\dagger}\Big)  \\
    & - \Big(\boldsymbol{\rho}_{t}^{cN}L_{j}^{\dagger} + L_{j}\boldsymbol{\rho}_{t}^{cN} - \boldsymbol{\rho}_{t}^{cN}\tr\big((L_{j}+L_{j}^{\dagger})\boldsymbol{\rho}_{t}^{cN}\big)\Big)\Big(\boldsymbol{\gamma}_{t,j}L^{\dagger}_{j} + L_{j}\boldsymbol{\gamma}_{t,j} - \boldsymbol{\gamma}_{t,j}\tr\big((L_{j} + L_{j}^{\dagger})\boldsymbol{\gamma}_{t,j}\big)\Big)\Big).
\end{align*}}
By commutativity under the trace, the terms corresponding to $i_l \neq j_1$ vanish. Then, we have
{{\small} \begin{align*}
 |\mathcal{P}_{2}^{j}| \leq C(\|L\|)\big(1 - \tr(\boldsymbol{\gamma}_{t,j}\boldsymbol{\rho}_{t}^{cN})\big)= C\mathcal{D}_{N}^{j}(t).
\end{align*}}
For $\mathcal{P}_1^{j}$, we have 
\begin{align*}
\mathcal{P}_{1}^{j} &= \mathrm{i}\,\tr\Big(\boldsymbol{\rho}_{t}^{cN}[\tilde{\mathbf{H}}_{j},\boldsymbol{\gamma}_{t,j}] + \boldsymbol{\rho}_{t}^{cN}[\frac{1}{c}\sum_{i = 1}^{c}{w}_{ji}A_{j}^{\mathbb{E}_{\mathbb{P}}[\boldsymbol{\gamma_{t,i}}]},\boldsymbol{\gamma}_{t,j}]  \Big)\\
&\;\;\; + \mathrm{i}\, \tr\Big(\boldsymbol{\gamma}_{t,j}[\sum_{i_l = 1_1}^{c_N}\tilde{\mathbf{H}}_{i_l},\boldsymbol{\rho}_{t}^{cN}] + \boldsymbol{\gamma}_{t,j}[\frac{1}{2cN}\sum_{i_l=1_1}^{c_N}\sum_{h_k = i_l}^{c_N}\xi_{h_{k}i_{l}}A_{h_{k}i_{l}},\boldsymbol{\rho}_{t}^{cN}]  \Big).
\end{align*}
Using commutativity of trace, we can re-write the expression as 
\begin{align*}
\mathcal{P}_{1}^{j} &= \mathrm{i}\,\tr\Big(\big[\tilde{\mathbf{H}}_{j},\boldsymbol{\gamma}_{t,j}\big]\boldsymbol{\rho}_{t}^{cN} + [\frac{1}{c}\sum_{i = 1}^{c}{w}_{ji}A_{j}^{\mathbb{E}_{\mathbb{P}}[{\gamma_{t,i}}]},\boldsymbol{\gamma}_{t,j}]\boldsymbol{\rho}_{t}^{cN}  \Big)\\
&\; + \mathrm{i}\, \tr\Big([\boldsymbol{\gamma}_{t,j},\sum_{i_l = 1_1}^{c_N}\tilde{\mathbf{H}}_{i_l}]\boldsymbol{\rho}_{t}^{cN} + \big[\boldsymbol{\gamma}_{t,j},\frac{1}{2cN}\sum_{i_l=1_1}^{c_N}\sum_{j_{l'} = i_l}^{c_N}\xi_{j_{l'}i_{l}}A_{j_{l'}i_{l}}\big] \boldsymbol{\rho}_{t}^{cN} \Big).
\end{align*}
We now introduce $\boldsymbol{g}_{t,j} = \mathbf{1} - \boldsymbol{\gamma}_{t,j}$. Then,
{{\small} \begin{align*}
\mathcal{P}_{1}^{j} &= \mathrm{i} \,\tr\Big([\sum_{i_l = 1_1}^{c_N}\tilde{\mathbf{H}}_{i_l},\boldsymbol{g}_{t,j}]\boldsymbol{\rho}_{t}^{cN} + [\frac{1}{2cN}\sum_{i_l=1_1}^{c_N}\sum_{j_{l'} = i_l}^{c_N}\xi_{j_{l'}i_{l}}A_{j_{l'}i_{l}},\boldsymbol{g}_{t,j}] \boldsymbol{\rho}_{t}^{cN} \Big)\\ &\; - \mathrm{i}\,\tr\Big([\tilde{\mathbf{H}}_{j},\boldsymbol{g}_{t,j}]\boldsymbol{\rho}_{t}^{cN} + [\frac{1}{c}\sum_{i = 1}^{c}{w}_{ji}A_{j}^{\mathbb{E}[{\gamma_{t,i}}]},\boldsymbol{g}_{t,j}]\boldsymbol{\rho}_{t}^{cN}  \Big).
\end{align*}}
All terms with $\tilde{\mathbf{H}}_{i_l}$ vanish because $[\tilde{\mathbf{H}}_{i_l},\boldsymbol{g}_{t,j}] = 0$ when $i_l \neq j_{1}$~\footnote{This follows since the operators $\mathbf{\tilde{H}}_{i_l}, \boldsymbol{g}_{t,j}$ act on different variables.} and the two remaining $\tilde{\mathbf{H}}_{j}$ terms cancel each other,
{{\small} \begin{align*}
\mathcal{P}_{1}^{j} &= \mathrm{i} \,\tr\Big(\Big[\frac{1}{2cN}\sum_{i_l=1_1}^{c_N}\sum_{j_{l'} = i_l}^{c_N}\xi_{j_{l'}i_{l}}A_{j_{l'}i_{l}}, \boldsymbol{g}_{t,j}\Big]\boldsymbol{\rho}_{t}^{cN}\Big) -\mathrm{i}\,\tr\Big(\Big[\frac{1}{c}\sum_{i = 1}^{c}{w}_{ji}A_{j}^{\mathbb{E}_{\mathbb{P}}[\boldsymbol{\gamma_{t,i}}]}, \boldsymbol{g}_{t,j}\Big]\boldsymbol{\rho}_{t}^{cN}\Big)\\
 &= \mathrm{i} \,\tr\Big(\Big[\frac{1}{2cN}\sum_{i_l=1_1}^{c_N}\sum_{h_k = i_l}^{c_N}\xi_{j_{l'}i_{l}}A_{j_{l'}i_{l}} - \frac{1}{c}\sum_{i = 1}^{c}{w}_{ji}A_{j}^{\mathbb{E}_{\mathbb{P}}[\boldsymbol{\gamma_{t,i}}]},\boldsymbol{g}_{t,j}\Big]\boldsymbol{\rho}_{t}^{cN}\Big).
\end{align*}}
Similarly, if $j_{l'} \neq j_1 $ {and} $i_l \neq j_1$, then $[A_{j_{l'}i_l}, \boldsymbol{g}_{j}] = 0$. Furthermore, remind that $\xi_{ji_l} = \xi_{i_lj}, A_{ji_l} = A_{i_lj}$. Therefore, 
{{\small} \begin{align*}
\mathcal{P}_{1}^{j} &= \mathrm{i}\,\tr\Big(\Big[\frac{1}{cN}\sum_{i_{l} \neq j_{1}}^{}\xi_{ji_{l}}A_{ji_{l}} - \frac{1}{c}\sum_{i = 1}^{c}{w}_{ji}A_{j}^{\mathbb{E}[{\gamma_{t,i}}]},\boldsymbol{g}_{t,j}\Big]\boldsymbol{\rho}_{t}^{cN}\Big).
\end{align*}}
Adding and subtracting the graphon term $w$,
{{{\small} 
\begin{align*}
\mathcal{P}_{1}^{j} &=\mathrm{i}\, \tr\Big(\!\Big[\frac{1}{cN}\sum_{i_{l} \neq j_{1}}^{}\big(\xi_{ji_{l}} - {w}_{ji}\big)A_{ji_{l}} + \frac{1}{cN}\sum_{i_{l} \neq j_{1}}{w}_{ji}A_{ji_{l}}\! - \!\frac{1}{c}\sum_{i = 1}^{c}{w}_{ji}A_{j}^{\mathbb{E}_{\mathbb{P}}[{\gamma_{t,i}}]},\boldsymbol{g}_{t,j}\Big]\boldsymbol{\rho}_{t}^{cN}\Big)\\
%\mathcal{P}_{1}^{j} 
= \mathrm{i}\!&\underbrace{\tr\Big(\!\Big[\tfrac{1}{cN}\sum_{i_{l} \neq j_{1}}^{}\big(\xi_{ji_{l}} - {w}_{ji}\big)A_{ji_{l}},\boldsymbol{g}_{t,j}\Big]\boldsymbol{\rho}_{t}^{cN}\Big)}_{(\star)^{j_1}}\!+ \mathrm{i}\!\underbrace{\tr\Big(\!\Big[\tfrac{1}{cN}\sum_{i_{l} \neq j_{1}}^{}{w}_{ji}A_{ji_{l}}\!-\!\tfrac{1}{c}\sum_{i = 1}^{c}{w}_{ji}A_{j}^{\mathbb{E}_{\mathbb{P}}[{\gamma_{t,i}}]},\boldsymbol{g}_{t,j}\Big]\!\boldsymbol{\rho}_{t}^{cN}\Big)}_{(\star\star)^{j_1}}.
\end{align*}
}} 

We now proceed to estimate the two terms $(\star)^{j_1}$ and $(\star\star)^{j_1}$:
\paragraph*{Estimate $(\star)^{j_1}$.} We have
{{\small} 
\begin{align*}
    \Big|\mathbb{E}_{\mathbb{P}}[(\star)^{j_1}]\Big| &\leq \Big|\mathbb{E}_{\mathbb{P}}\Big[\tr\Big(\!\Big[\frac{1}{cN}\sum_{i_{l} \neq j_{1}}^{}\big(\xi_{ji_{l}} - {w}_{ji}\big)A_{ji_{l}},\boldsymbol{g}_{t,j}\Big]\boldsymbol{\rho}_{t}^{cN}\Big)\Big]\Big|\\
    &\leq C\Big|\mathbb{E}_{\mathbb{P}}\Big[\frac{1}{cN}\sum_{i_l \neq j_1}\Big( \xi_{ji_{l}} - {w}_{ji} \Big)\Big] \Big|
\end{align*}
}

    By adding and substracting a step graphon kernel $\mathrm{w}^{G_{cN}}$, we obtain 
   {{\small} 
\begin{align*}
    \Big|\mathbb{E}_{\mathbb{P}}[(\star)^{j_1}]\Big|
    &\leq C\Big(\Big|\frac{1}{cN}\sum_{i_l \neq j_1}\underbrace{\mathbb{E}_{\mathbb{P}}\Big[\Big( \xi_{ji_{l}} - {w}^{G_{cN}}\big(\frac{jN - N +1}{cN},\frac{iN - N + l}{cN}\big) \Big)\Big]}_{0}\Big|\\
    &\quad\quad  + \Big|\frac{1}{cN}\sum_{i_l \neq j_1}\mathbb{E}_{\mathbb{P}}\Big[\Big( {w}^{G_{cN}}\big(\frac{jN - N +1}{cN},\frac{iN - N + l}{cN}\big) - {w}_{ji} \Big)\Big]\Big|\Big).
\end{align*}
}

Taking the average over $j$,
{{\small} 
\begin{align*}
\frac{1}{c}\sum_{j=1}^{c}\big|\mathbb{E}_{\mathbb{P}}[(\star)^{j_1}]\big| &= \frac{1}{cN}\sum_{j_{l'}=1_1}^{c_N}\Big|\mathbb{E}_{\mathbb{P}}[(\star)^{j_{l'}}]\Big|\\
&\leq C\Big(\big(\frac{1}{cN}\big)^2\sum_{j_{l'}=1_1}^{c_N}\Big|\sum_{i_l=1_1}^{c_N}\mathbb{E}_{\mathbb{P}}\big[\Big(w^{G_{cN}}(\frac{jN - N + l'}{cN},\frac{iN - N + l}{cN}) - w_{ji}\big]\Big|\\
&\leq C\|\mathcal{T}_{w^{G_{cN}}-w}\|_{\text{op}} \stackrel{\eqref{lovasz}}\leq C\|w^{G_{cN}}- w \|_{\square}.
\end{align*}
}

\paragraph*{Estimate $(\star\star)^{j_1}$.} We have
{{\small} \begin{align*}
(\star\star)^{j_1} &= \tr\Big(\big[\frac{1}{cN}\sum_{i_{l} \neq j_{1}}{w}_{ji}A_{ji_{l}} - \frac{1}{c}\sum_{i=1}^{c}{w}_{ji}A_{j}^{\mathbb{E}_{\mathbb{P}}\big[\boldsymbol{\gamma}_{t,i}\big]},\boldsymbol{g}_{t,j} \big]\boldsymbol{\rho}_{t}^{cN}\Big) \\ 
&=\frac{1}{c}\tr\Big(\Big[\frac{1}{N}\Big(\sum_{i \neq j}\sum_{l=1}^{N}{w}_{ji}A_{ji_{l}} + \sum_{l=2}^{N}{w}_{jj}A_{jj_l}\Big) - \sum_{i=1}^{c}{w}_{ji}A_{j}^{\mathbb{E}_{\mathbb{P}}\big[\boldsymbol{\gamma}_{t,i}\big]},\boldsymbol{g}_{t,j}\Big]\boldsymbol{\rho}_{t}^{cN}\Big) \\
&=\frac{1}{c}\!\tr\Big(\Big[\frac{1}{N}\sum_{i \neq j}\sum_{l=1}^{N}{w}_{ji}A_{ji_{l}}  - \sum_{i \neq j}^{c}{w}_{ji}A_{j}^{\mathbb{E}_{\mathbb{P}}\big[\boldsymbol{\gamma}_{t,i}\big]} \!+\! \frac{1}{N}\sum_{l=2}^{N}{w}_{jj}A_{jj_l} - {w}_{jj}A_{j}^{\mathbb{E}_{\mathbb{P}}\big[\boldsymbol{\gamma}_{t,j}\big]},\boldsymbol{g}_{t,j}\Big]\boldsymbol{\rho}_{t}^{cN}\Big)\\
&=\frac{1}{c}\tr\Big(\Big[\sum_{i \neq j}^{c}{w}_{ji}\Big(\frac{1}{N}\sum_{l=1}^{N}A_{ji_{l}}  - A_{j}^{\mathbb{E}\big[\boldsymbol{\gamma}_{t,i}\big]}\Big) + {w}_{jj}\Big(\frac{1}{N}\sum_{l=2}^{N}A_{jj_l} - A_{j}^{\mathbb{E}_{\mathbb{P}}\big[\boldsymbol{\gamma}_{t,j}\big]}\Big),\boldsymbol{g}_{t,j}\Big]\boldsymbol{\rho}_{t}^{cN}\Big).
\end{align*}}
We split $(\star\star)^{j_1}$ into two parts, respectively containing interactions inside block $j$ and those outside block $j$:
\begin{align*}(\star\star)^{j_1} &=  (\star\star\star)_{1}^{j_1} + (\star\star\star)_{2}^{j_1},
\end{align*}
where:
\begin{enumerate}
\item Inter-block interaction:
{{\small} \begin{align*}
(\star\star\star)_{1}^{j_1} &:= \frac{1}{c}\tr\Big(\Big[\sum_{i \neq j}^{c}{w}_{ji}\Big(\frac{1}{N}\sum_{l=1}^{N}A_{ji_{l}}  - A_{j}^{\mathbb{E}_{\mathbb{P}}\big[\boldsymbol{\gamma}_{t,i}\big]}\Big),\boldsymbol{g}_{t,j}\Big]\boldsymbol{\rho}_{t}^{cN}\Big)
\end{align*}}
\item Intra-block interaction:
{{\small} \begin{align*}
(\star\star\star)_{2}^{j_1} &:= \frac{1}{c}\tr\Big(\Big[{w}_{jj}\Big(\frac{1}{N}\sum_{l=2}^{N}A_{jj_l} - A_{j}^{\mathbb{E}_{\mathbb{P}}\big[\boldsymbol{\gamma}_{t,j}\big]}\Big),\boldsymbol{g}_{t,j}\Big]\boldsymbol{\rho}_{t}^{cN}\Big).
\end{align*}}
\end{enumerate}

We focus below on the inter-block term $(\star\star\star)_1^{j_1}$. The treatment of the intra-block term $(\star\star\star)_2^{j_1}$ follows the same arguments used for the homogeneous case in \cite{kolokoltsov22qmfg}.

\paragraph*{Estimate $(\star\star\star)_{1}^{j_1}$.}
To eliminate the mean-field interaction term, we approximate expectation by empirical means:

{{\small} \begin{align*}
    \delta^{j_{l'},N}_{t} &:= \frac{1}{N}\sum_{l'' = 1 }^{N}\gamma_{t,j_{l''}} - \mathbb{E}_{\mathbb{P}}[\gamma_{t,j_{l'}}],\quad
    A_{j}^{\mathbb{E}_{\mathbb{P}}[\gamma_{t,j_{l'}}]}:= \frac{1}{N}\sum_{l''= 1}^{N}A_{j}^{\gamma_{t,j_{l''}}} - A_{j}^{\delta_{t}^{j_{l'}},N}.
\end{align*}}
Since $\mathbb{E}_{\mathbb{P}}\big[\delta_{t}^{j,l,N}\big]=0$, and by law of large numbers, we obtain
{{\small} 
\begin{align}    \mathbb{E}_{\mathbb{P}}\Big[\|\delta_{t}^{j,l,N}\|_{2}^{2}\Big] &\stackrel{}= \mathbb{V}\Big[\|\delta_{t}^{j,l,N}\|_{2}^{2}\Big]
    \leq \frac{1}{N}\mathbb{V}\Big[\|\gamma_{t,j_l}^{}\|_{2}^{2}\Big],
\label{lln}
\end{align}}
where $\mathbb{V}$ is the variance under $\mathbb{P}$, then, 
{{\small}\begin{align*}
    (\star\star\star)_{1}^{j_1} &= \frac{1}{c}\tr\Big(\Big[\sum_{i\neq j}^{c}{w}_{ji}\Big(\frac{1}{N}\sum_{l=1}^{N}A_{ji_l} - \frac{1}{N}\sum_{l=1}^{N}A_{j}^{\gamma_{t},i_l} + A_{j}^{\delta^{i_1,N}}\Big),\boldsymbol{g}_{t,j}\Big]\boldsymbol{\rho}_{t}^{cN}\Big),
\end{align*}}
and we decompose the term $(\star\star\star)_{1}^{j_1} = (\star\star\star)_{11}^{j_1} + (\star\star\star)_{12}^{j_1}$, where
{{\small}\begin{align*}
    (\star\star\star)_{11}^{j_1} &= \frac{1}{c}\tr\Big(\Big[\sum_{i\neq j}^{c}{w}_{ji}A_{j}^{\delta^{i_1,N}},\boldsymbol{g}_{t,j}\Big]\boldsymbol{\rho}_{t}^{cN}\Big),\\
    (\star\star\star)_{12}^{j_1} &= \frac{1}{c}\tr\Big(\Big[\sum_{i\neq j}^{c}{w}_{ji}\Big(\frac{1}{N}\sum_{l=1}^{N}A_{ji_l} - \frac{1}{N}\sum_{l=1}^{N}A_{j}^{\gamma_{t},i_l} \Big),\boldsymbol{g}_{t,j}\Big]\boldsymbol{\rho}_{t}^{cN}\Big).
\end{align*}}

\paragraph*{Estimate $(\star\star\star)_{11}^{j_1}$.}
  We have: 
    {{\small} \begin{align*}
        \mathbb{E}_{\mathbb{P}}\big[|(\star\star\star)_{11}^{j_1}|\big] &\leq C\mathbb{E}_{\mathbb{P}}\Big[\Big\|A_{j}^{\delta^{i_1,N}_t}\Big\|_{2}\Big]
        \stackrel{\text{Lemma}   \ref{mfoperatornorm}}\leq C\mathbb{E}_{\mathbb{P}}[\big\|a\big\|_{2}\big\|\delta^{i,N}_t\big\|_{2}]\stackrel{\text{C-S}}\leq C\sqrt{\mathbb{E}_{\mathbb{P}}\Big[\big\|\delta^{i,N}_t\big\|_{2}^{2}\Big]}
        \stackrel{\eqref{lln}}\leq \frac{C}{\sqrt{N}}.
    \end{align*}}
\paragraph*{Estimate $(\star\star\star)_{12}^{j_1}$.} We now estimate the term $(\star\star\star)_{12}^{j_1}$ by analyzing the difference of two trace expressions involving non-commuting operators. Specifically, 
{{\small} \begin{align*}
    \Big|(\star\star\star)_{12}^{j_1}\Big| &= \Big|\frac{1}{cN}\sum_{i\neq j}^{c}w_{ji}\tr\Big(\Big[\sum_{l=1}^{N}\big(A_{ji_l} - A^{\gamma_{t,i_l}}_{j}\big),\boldsymbol{g}_{t,j}\Big]\rho_{t}^{cN}\Big)\Big|\\
    &\leq \frac{1}{N}\sum_{l=1}^{N}\Big|{ \underbrace{\tr\Big(\big(A_{ji_l} - A^{\gamma_{t,i_l}}_{j}\big)\boldsymbol{g}_{t,j}\rho_{t}^{cN}\Big)}_{(\alpha)^{i_l}} - \underbrace{\tr\Big(\boldsymbol{g}_{t,j}\big(A_{ji_l} - A^{\gamma_{t,i_l}}_{j}\big)\rho_{t}^{cN}\Big)}_{(\beta)^{i_l}}}\Big|,
\end{align*}}
where,
{{\small} \begin{align*}
    (\alpha)^{i_l} &= \tr\Big(\big(A_{ji_l} - A^{\gamma_{t,i_l}}_{j}\big)\boldsymbol{g}_{t,j}\boldsymbol{\rho}_{t}^{cN}\Big) = \langle \Psi^{cN}_{t}, \big(A_{ji_l} - A^{\gamma_{t,i_l}}_{j}\big)\boldsymbol{g}_{t,j}\Psi^{cN}_{t}\rangle\\
    (\beta)^{i_l} &= \tr\Big(\boldsymbol{g}_{t,j}\big(A_{ji_l} - A^{\gamma_{t,i_l}}_{j}\big)\boldsymbol{\rho}_{t}^{cN}\Big) = \langle \Psi^{cN}_{t}, \boldsymbol{g}_{t,j}\big(A_{ji_l} - A^{\gamma_{t,i_l}}_{j}\big)\Psi^{cN}_{t}\rangle.
\end{align*}}
By using the fact that $\boldsymbol{g}_{t,h_k} + \boldsymbol{\gamma}_{t,h_k} = \mathbf{1}$,
{{\small}\begin{align*}
    (\alpha)^{i_l} &= \langle \Psi^{cN}_{t}, (\boldsymbol{g}_{t,j} + \boldsymbol{\gamma}_{t,j})(\boldsymbol{g}_{t,i_l} + \boldsymbol{\gamma}_{t,i_l})\big(A_{ji_l} - A^{\gamma_{t,i_l}}_{j}\big)\boldsymbol{g}_{t,j}(\boldsymbol{g}_{t,j} + \boldsymbol{\gamma}_{t,j})(\boldsymbol{g}_{t,i_l} + \boldsymbol{\gamma}_{t,i_l})\Psi^{cN}_{t}\rangle\\
    %(\alpha)^{i_l} 
    &= \langle \Psi^{cN}_{t}, (\boldsymbol{g }_{t,j} + \boldsymbol{\gamma}_{t,j})(\boldsymbol{g}_{t,i_l} + \boldsymbol{\gamma}_{t,i_l})\big(A_{ji_l} - A^{\gamma_{t,i_l}}_{j}\big)\boldsymbol{g}_{t,j}(\boldsymbol{g}_{t,i_l} + \boldsymbol{\gamma}_{t,i_l})\Psi^{cN}_{t}\rangle,\\
    (\beta)^{i_l} &= \langle \Psi^{cN}_{t}, \boldsymbol{g}_{t,j}(\boldsymbol{g}_{t,j} + \boldsymbol{\gamma}_{t,j})(\boldsymbol{g}_{t,i_l} + \boldsymbol{\gamma}_{t,i_l})\big(A_{ji_l} - A^{\gamma_{t,i_l}}_{j}\big)(\boldsymbol{g}_{t,j} + \boldsymbol{\gamma}_{t,j})(\boldsymbol{g}_{t,i_l} + \boldsymbol{\gamma}_{t,i_l})\Psi^{cN}_{t}\rangle\\
    %(\beta)^{i_l} 
    &= \langle \Psi^{cN}_{t},\boldsymbol{g}_{t,j}(\boldsymbol{g}_{t,i_l} + \boldsymbol{\gamma}_{t,i_l})\big(A_{ji_l} - A^{\gamma_{t,i_l}}_{j}\big)(\boldsymbol{g}_{t,j} + \boldsymbol{\gamma}_{t,j})(\boldsymbol{g}_{t,i_l} + \boldsymbol{\gamma}_{t,i_l})\Psi^{cN}_{t}\rangle.
\end{align*}}

We decompose $(\alpha)^{i_l}$ and $(\beta)^{i_l}$ into eight components each such that
{{\small} \begin{align*}
(\alpha)^{i_l} &= \sum_{k=1}^{8}(\alpha_{k})^{i_l} \;\; \text{and } \; (\beta)^{i_l} = \sum_{k=1}^{8}(\beta_{k})^{i_l}, 
\end{align*}}
where,
{\footnotesize
\begin{align*}
    (\alpha_{1})^{i_l} = \langle \Psi_{t}^{cN}, \boldsymbol{g}_{t,j}\boldsymbol{g}_{t,i_l} \big( A_{ji_l} - A^{\gamma_{t,i_l}}_{j} \big) \boldsymbol{g}_{t,j}\boldsymbol{g}_{t,i_l} \Psi_{t}^{cN} \rangle, 
    & \ 
    (\beta_{1})^{i_l} = \langle \Psi_{t}^{cN}, \boldsymbol{g}_{t,j}\boldsymbol{\gamma}_{t,i_l} \big( A_{ji_l} - A^{\gamma_{t,i_l}}_{j} \big) \boldsymbol{\gamma}_{t,j}\boldsymbol{\gamma}_{t,i_l} \Psi_{t}^{cN} \rangle, \\
    (\alpha_{2})^{i_l} = \langle \Psi_{t}^{cN}, \boldsymbol{g}_{t,j}\boldsymbol{g}_{t,i_l} \big( A_{ji_l} - A^{\gamma_{t,i_l}}_{j} \big) \boldsymbol{g}_{t,j}\boldsymbol{\gamma}_{t,i_l} \Psi_{t}^{cN} \rangle, 
    & \
    (\beta_{2})^{i_l} = \langle \Psi_{t}^{cN}, \boldsymbol{g}_{t,j}\boldsymbol{\gamma}_{t,i_l} \big( A_{ji_l} - A^{\gamma_{t,i_l}}_{j} \big) \boldsymbol{\gamma}_{t,j}\boldsymbol{g}_{t,i_l} \Psi_{t}^{cN} \rangle, \\
    (\alpha_{3})^{i_l} = \langle \Psi_{t}^{cN}, \boldsymbol{g}_{t,j}\boldsymbol{\gamma}_{t,i_l} \big( A_{ji_l} - A^{\gamma_{t,i_l}}_{j} \big) \boldsymbol{g}_{t,j}\boldsymbol{g}_{t,i_l} \Psi_{t}^{cN} \rangle, 
    & \ 
    (\beta_{3})^{i_l} = \langle \Psi_{t}^{cN}, \boldsymbol{g}_{t,j}\boldsymbol{\gamma}_{t,i_l} \big( A_{ji_l} - A^{\gamma_{t,i_l}}_{j} \big) \boldsymbol{g}_{t,j}\boldsymbol{\gamma}_{t,i_l} \Psi_{t}^{cN} \rangle, \\
    (\alpha_{4})^{i_l} = \langle \Psi_{t}^{cN}, \boldsymbol{g}_{t,j}\boldsymbol{\gamma}_{t,i_l} \big( A_{ji_l} - A^{\gamma_{t,i_l}}_{j} \big) \boldsymbol{g}_{t,j}\boldsymbol{\gamma}_{t,i_l} \Psi_{t}^{cN} \rangle, 
    & \ 
    (\beta_{4})^{i_l} = \langle \Psi_{t}^{cN}, \boldsymbol{g}_{t,j}\boldsymbol{\gamma}_{t,i_l} \big( A_{ji_l} - A^{\gamma_{t,i_l}}_{j} \big) \boldsymbol{g}_{t,j}\boldsymbol{g}_{t,i_l} \Psi_{t}^{cN} \rangle, \\
    (\alpha_{5})^{i_l} = \langle \Psi_{t}^{cN}, \boldsymbol{\gamma}_{t,j}\boldsymbol{g}_{t,i_l} \big( A_{ji_l} - A^{\gamma_{t,i_l}}_{j} \big) \boldsymbol{g}_{t,j}\boldsymbol{g}_{t,i_l} \Psi_{t}^{cN} \rangle, 
    & \ 
    (\beta_{5})^{i_l} = \langle \Psi_{t}^{cN}, \boldsymbol{g}_{t,j}\boldsymbol{g}_{t,i_l} \big( A_{ji_l} - A^{\gamma_{t,i_l}}_{j} \big) \boldsymbol{\gamma}_{t,j}\boldsymbol{\gamma}_{t,i_l} \Psi_{t}^{cN} \rangle, \\
    (\alpha_{6})^{i_l} = \langle \Psi_{t}^{cN}, \boldsymbol{\gamma}_{t,j}\boldsymbol{g}_{t,i_l} \big( A_{ji_l} - A^{\gamma_{t,i_l}}_{j} \big) \boldsymbol{g}_{t,j}\boldsymbol{\gamma}_{t,i_l} \Psi_{t}^{cN} \rangle, 
    & \ 
    (\beta_{6})^{i_l} = \langle \Psi_{t}^{cN}, \boldsymbol{g}_{t,j}\boldsymbol{g}_{t,i_l} \big( A_{ji_l} - A^{\gamma_{t,i_l}}_{j} \big) \boldsymbol{\gamma}_{t,j}\boldsymbol{g}_{t,i_l} \Psi_{t}^{cN} \rangle, \\
    (\alpha_{7})^{i_l} = \langle \Psi_{t}^{cN}, \boldsymbol{\gamma}_{t,j}\boldsymbol{\gamma}_{t,i_l} \big( A_{ji_l} - A^{\gamma_{t,i_l}}_{j} \big) \boldsymbol{g}_{t,j}\boldsymbol{g}_{t,i_l} \Psi_{t}^{cN} \rangle, 
    & \ 
    (\beta_{7})^{i_l} = \langle \Psi_{t}^{cN}, \boldsymbol{g}_{t,j}\boldsymbol{g}_{t,i_l} \big( A_{ji_l} - A^{\gamma_{t,i_l}}_{j} \big) \boldsymbol{g}_{t,j}\boldsymbol{\gamma}_{t,i_l} \Psi_{t}^{cN} \rangle, \\
    (\alpha_{8})^{i_l} = \langle \Psi_{t}^{cN}, \boldsymbol{\gamma}_{t,j}\boldsymbol{\gamma}_{t,i_l} \big( A_{ji_l} - A^{\gamma_{t,i_l}}_{j} \big) \boldsymbol{g}_{t,j}\boldsymbol{\gamma}_{t,i_l} \Psi_{t}^{cN} \rangle, 
    & \ 
    (\beta_{8})^{i_l} = \langle \Psi_{t}^{cN}, \boldsymbol{g}_{t,j}\boldsymbol{g}_{t,i_l} \big( A_{ji_l} - A^{\gamma_{t,i_l}}_{j} \big) \boldsymbol{g}_{t,j}\boldsymbol{g}_{t,i_l} \Psi_{t}^{cN} \rangle.
\end{align*}}

Several of these terms cancel pairwise:
{{\small} \begin{align*}(\alpha_{6})^{i_l} - (\beta_{2})^{i_l} = (\alpha_{4})^{i_l} - (\beta_{3})^{i_l} = (\alpha_{3})^{i_l} - (\beta_{4})^{i_l} = (\alpha_{2})^{i_l} - (\beta_{7})^{i_l} = (\alpha_{1})^{i_l} - (\beta_{8})^{i_l} = 0.
\end{align*}} 
We thus have
{{\small} \begin{align*}
    \Big|(\star\star\star)_{12}^{j_1}\Big| 
    &\leq \frac{2}{N}\sum_{l=1}^{N}\Big({(\triangle_{1}^{j,i_l})} + {(\triangle_{2}^{j,i_l})} +{(\triangle_{3}^{j,i_l})}\Big),
\end{align*}}
where
{{\small} \begin{align*}
{(\triangle_{1}^{j,i_l})} &:= \Big|\langle \Psi_{t}^{cN}, \boldsymbol{\gamma}_{t,j}\boldsymbol{\gamma}_{t,i_l} \big( A_{ji_l} - A^{\gamma_{t,i_l}}_{j} \big) \boldsymbol{\gamma}_{t,i_l}\boldsymbol{g}_{t,j} \Psi_{t}^{cN} \rangle\Big|,\\
{(\triangle_{2}^{j,i_l})} &:= \Big|\langle \Psi_{t}^{cN}, \boldsymbol{\gamma}_{t,j}\boldsymbol{\gamma}_{t,i_l} \big( A_{ji_l} - A^{\gamma_{t,i_l}}_{j} \big) \boldsymbol{g}_{t,j}\boldsymbol{g}_{t,i_l} \Psi_{t}^{cN} \rangle\Big|,\\
{(\triangle_{3}^{j,i_l})} &:= \Big|\langle \Psi_{t}^{cN}, \boldsymbol{\gamma}_{t,j}\boldsymbol{g}_{t,i_l} \big( A_{ji_l} - A^{\gamma_{t,i_l}}_{j} \big) \boldsymbol{g}_{t,j}\boldsymbol{g}_{t,i_l} \Psi_{t}^{cN} \rangle\Big|.
\end{align*}}

\paragraph*{Estimate $(\triangle_{1}^{j,i_l})$.} We first show that $(\triangle_{1}^{j,i_l})=0$: 
{{\small} \begin{align*}
    (\triangle_{1}^{j,i_l}) &= \Big|\langle \Psi_{t}^{cN}, \boldsymbol{\gamma}_{t,j}\boldsymbol{\gamma}_{t,i_l} \big( A_{ji_l} - A^{\gamma_{t,i_l}}_{j} \big) \boldsymbol{\gamma}_{t,i_l}\boldsymbol{g}_{t,j} \Psi_{t}^{cN} \rangle\Big|\\
    &= \Big|\langle \Psi_{t}^{cN}, \boldsymbol{\gamma}_{t,j}\big( \boldsymbol{\gamma}_{t,i_l}A_{ji_l}\boldsymbol{\gamma}_{t,i_l} - \boldsymbol{\gamma}_{t,i_l}A^{\gamma_{t,i_l}}_{j}\boldsymbol{\gamma}_{t,i_l} \big)\boldsymbol{g}_{t,j} \Psi_{t}^{cN} \rangle\Big|\\
    &= \Big|\langle \Psi_{t}^{cN}, \boldsymbol{\gamma}_{t,j}\big( \boldsymbol{\gamma}_{t,i_l}A_{ji_l}\boldsymbol{\gamma}_{t,i_l} - \boldsymbol{\gamma}_{t,i_l}A_{ji_l}\boldsymbol{\gamma}_{t,i_l} \big)\boldsymbol{g}_{t,j} \Psi_{t}^{cN} \rangle\Big|= 0,
\end{align*}}
where we used the fact that $\gamma_{i_l}A_{ji_l}\gamma_{i_l} = \gamma_{i_l}A_{j}^{\gamma_{i_l}}.$

\paragraph*{Estimate $(\triangle_{2}^{j,i_l})$.}
%{{\small} \begin{align*}
%    (\triangle_{2}^{j,i_l}) &= \Big|\langle \Psi_{t}^{cN}, \boldsymbol{\gamma}_{t,j}\boldsymbol{\gamma}_{t,i_l} \big( A_{ji_l} - A^{\gamma_{t,i_l}}_{j} \big) \boldsymbol{g}_{t,j}\boldsymbol{g}_{t,i_l} \Psi_{t}^{cN} \rangle\Big|,
%\end{align*}}
We define the weight functions
\begin{align*}
    f(k) = \frac{k}{N}, \quad  h(k) = \sqrt{\frac{k}{N}},
\end{align*}
and the associated operators
{{\small} \begin{align*}
    \hat{f}_i = \sum_{k=0}^{N}f(k)\mathbf{P}_{k,i}, \quad  \hat{h}_i = \sum_{k=0}^{N}h(k)\mathbf{P}_{k,i}, \quad i \in [c],
\end{align*}}
along with their inverses $\hat{f}^{-1}$ and $\hat{h}^{-1}$. Using again $\gamma_{i_l}A_{ji_l}\gamma_{i_l} = \gamma_{i_l}A_{j}^{\gamma_{i_l}} $, we have
{{\small} \begin{align*}
(\triangle_{2}^{j,i_l})   &= \Big|\langle \Psi_{t}^{cN}, \boldsymbol{\gamma}_{t,j}\boldsymbol{\gamma}_{t,i_l} A_{ji_l}\boldsymbol{g}_{t,j}\boldsymbol{g}_{t,i_l} \Psi_{t}^{cN} \rangle\Big|\\
&= \Big|\langle \Psi_{t}^{cN}, \boldsymbol{\gamma}_{t,j}\boldsymbol{\gamma}_{t,i_l} A_{ji_l}\hat{h}_{j}\hat{h}_{j}^{-1}\hat{h}_{i}\hat{h}_{i}^{-1}\boldsymbol{g}_{t,j}\boldsymbol{g}_{t,i_l} \Psi_{t}^{cN} \rangle\Big|.
\end{align*}}
Applying the technical Lemma \ref{technicallemma},
{{\small} \begin{align*}
(\triangle_{2}^{j,i_l})
&= \Big|\langle \Psi_{t}^{cN}, \boldsymbol{\gamma}_{t,j}\boldsymbol{\gamma}_{t,i_l} \hat{\tau_{1} h}_{i}\hat{\tau_{1}h}_{j}A_{ji_l}\hat{h}_{i}^{-1}\hat{h}_{j}^{-1}\boldsymbol{g}_{t,j}\boldsymbol{g}_{t,i_l} \Psi_{t}^{cN} \rangle\Big|.
\end{align*}}
Taking the expectation and applying the Cauchy–Schwarz inequality, 
{{\small} 
\begin{align*}
\mathbb{E}_{\mathbb{P}}\Big[(\triangle_{2}^{j,i_l}) \Big] 
&\stackrel{\text{C-S}}\leq \sqrt{\mathbb{E}_{\mathbb{P}}\Big[\Big\|A_{ji_l}\hat{\tau_{1}h_i}\hat{\tau_{1}h_j}\boldsymbol{\gamma}_{t,j}\boldsymbol{\gamma}_{t,i_l}\Psi_{t}^{cN}\Big\|_{2}^{2} \Big]}
\sqrt{\mathbb{E}_{\mathbb{P}}\Big[\Big\|\hat{h}^{-1}_{i}\hat{h}^{-1}_{j}\boldsymbol{g}_{t,j}\boldsymbol{g}_{t,i_l}\Psi_{t}^{cN}\Big\|_{2}^{2}\Big]}.
\end{align*}}

We estimate each term by

{{\small} 
\begin{align*}
&\mathbb{E}_{\mathbb{P}}\Big[\Big\|\hat{h}^{-1}_{i}\hat{h}^{-1}_{j}\boldsymbol{g}_{t,j}\boldsymbol{g}_{t,i_l}\Psi_{t}^{cN}\Big\|_{2}^{2}\Big] 
= \mathbb{E}_{\mathbb{P}}\Big[\langle \Psi_{t}^{cN},\hat{h}_{i}^{-2}\hat{h}^{-2}_{j}\boldsymbol{g}_{t,i}\boldsymbol{g}_{t,i_l}\Psi^{cN}\rangle \Big]\\
& \quad \stackrel{\text{C-S}}\leq \mathbb{E}_{\mathbb{P}}\Big[\|\hat{f}_i^{-1}\hat{f}_{j}^{-1}\|_{\text{op}}\Big\|\Psi_t^{cN}\boldsymbol{g}_{t,i} \Big\|_{2}^{2}\Big\|\Psi_t^{cN}\boldsymbol{g}_{t,j_l} \Big\|_{2}^{2} \Big]
\leq C\,\mathbb{E}_{\mathbb{P}}\Big[\mathcal{D}_{N}^{i}(t)\mathcal{D}_{N}^{j}(t)\Big] 
\leq  C\underbrace{\mathbb{E}_{\mathbb{P}}\Big[\mathcal{D}_N(t)\Big]}_{{x}},
\end{align*}}

{{\small} 
\begin{align*}
\mathbb{E}_{\mathbb{P}}\Big[\Big\|A_{ji_l}\hat{\tau_{1}h_i}\hat{\tau_{1}h_j}\boldsymbol{\gamma}_{t,j}\boldsymbol{\gamma}_{t,i_l}\Psi_{t}^{cN}\Big\|_{2}^{2} \Big] 
&\leq\! \mathbb{E}_{\mathbb{P}}\!\Big[\langle\Psi_{t}^{cN}, \hat{\tau_{1}h_{i}}\hat{\tau_{1}h_j}\boldsymbol{\gamma}_{t,i_l}\boldsymbol{\gamma}_{t,j}A_{ji_l}^{2}\hat{\tau_{1}h_{i}}\hat{\tau_{1}h_j}\boldsymbol{\gamma}_{t,i_l}\boldsymbol{\gamma}_{t,j}\Psi_{t}^{cN}\rangle \Big]\\
&\leq \|a\|_{2} \,\mathbb{E}_{\mathbb{P}}\Big[\langle \Psi_{t}^{cN},\hat{\tau_{1}h_{i}^2}\Psi_{t}^{cN}\rangle \langle \Psi_{t}^{cN},\hat{\tau_{1}h_{j}^2}\Psi_{t}^{cN}\rangle\Big]\\
&\leq \|a\|_{2}\,\mathbb{E}_{\mathbb{P}}\Big[\Big(\langle \Psi_{t}^{cN},\hat{f_{i}}\Psi_{t}^{cN}\rangle + \frac{1}{N} \Big)\Big(\langle \Psi_{t}^{cN},\hat{f_{j}}\Psi_{t}^{cN}\rangle + \frac{1}{N} \Big) \Big]\\
&\leq \|a\|_{2}\,\mathbb{E}_{\mathbb{P}}\Big[\Big(\mathcal{D}^{i}_{N}(t)+\frac{1}{N}\Big)\Big(\mathcal{D}^{j}_{N}(t)+\frac{1}{N}\Big)\Big]\\
&\leq \|a\|_2\,\mathbb{E}_{\mathbb{P}}\Big[\Big(\mathcal{D}_{N}(t)+\frac{1}{N}\Big)^2\Big] 
\leq \|a\|_{2}\underbrace{\mathbb{E}_{\mathbb{P}}\Big[\Big(\mathcal{D}_{N}(t) + \frac{3}{N} \Big)\Big]}_{{y}}.
\end{align*}}

Hence, 

{{\small} 
\begin{align*}
\mathbb{E}_{\mathbb{P}}\Big[(\triangle_{2}^{j,i_l}) \Big] 
&\stackrel{\sqrt{xy} \leq \frac{x+y}{2}}{\leq} 
C\Big(\mathbb{E}_{\mathbb{P}}\Big[\mathcal{D}_N(t)\Big] + \frac{3}{2N}\Big).
\end{align*}}

\paragraph*{Estimate $(\triangle_{3}^{j,i_l})$.} We have
{{\small} 
\begin{align*}
\mathbb{E}_{\mathbb{P}}\big[{(\triangle_{3}^{j,i_l})}\big] 
&= \mathbb{E}_{\mathbb{P}}\Big[\Big|\langle \Psi_{t}^{cN}, \boldsymbol{\gamma}_{t,j}\boldsymbol{g}_{t,i_l} \big( A_{ji_l} - A^{\gamma_{t,i_l}}_{j} \big) \boldsymbol{g}_{t,j}\boldsymbol{g}_{t,i_l} \Psi_{t}^{cN} \rangle\Big|\Big]\\
&\stackrel{\text{C-S}}\leq \mathbb{E}_{\mathbb{P}}\Big[\Big\|\boldsymbol{g}_{t,j}\boldsymbol{g}_{t,i_l}\Psi_t^{cN}\Big\|_{2}
\Big\|\big( A_{ji_l} - A^{\gamma_{t,i_l}}_{j}\big)\boldsymbol{\gamma}_{t,j}\boldsymbol{g}_{t,i_l}\Psi_{t}^{cN} \Big\|_{2}\Big]\\
&\leq \mathbb{E}_{\mathbb{P}}\Big[\Big\|\boldsymbol{g}_{t,j}\boldsymbol{g}_{t,i_l}\Psi_t^{cN}\Big\|_{2}
\Big\|\boldsymbol{\gamma}_{t,j}\big( A_{ji_l} - A^{\gamma_{t,i_l}}_{j}\big)^{2}\boldsymbol{\gamma}_{t,j}\Psi_{t}^{cN}\Big\|_{\text{op}}
\Big\|\boldsymbol{g}_{t,i_l}\Psi_{t}^{cN}\Big\|_{2} \Big]\\
&\leq C\,\mathbb{E}_{\mathbb{P}}\Big[\mathcal{D}_{N}^{i_l}(t) + \mathcal{D}_{N}^{j_1}(t)\Big] 
\leq C\,\mathbb{E}_{\mathbb{P}}\Big[\mathcal{D}_{N}(t)\Big],
\end{align*}
}

where we used the bound 
{{\small}$\Big\|\boldsymbol{\gamma}_{t,j}\big( A_{ji_l} - A^{\gamma_{t,i_l}}_{j}\big)^{2}\boldsymbol{\gamma}_{t,j}\Psi_{t}^{cN}\Big\|_{\text{op}} \leq C.$}

We thus have

{{\small} 
\begin{align*}
    \mathbb{E}_{\mathbb{P}}\Big[\Big|(\star\star\star)_{12}^{j_1} \Big| \Big] 
    &\leq C\Big(\mathbb{E}_{\mathbb{P}}[\mathcal{D}_N(t)] + \frac{1}{N}\Big), 
\end{align*}}

and,

{{\small} 
\begin{align*}
    \mathbb{E}_{\mathbb{P}}\Big[\Big|(\star\star\star)_{1}^{j_1} \Big| \Big] 
    &\leq \mathbb{E}_{\mathbb{P}}\Big[\Big|(\star\star\star)_{11}^{j_1} \Big| \Big] 
    + \mathbb{E}_{\mathbb{P}}\Big[\Big|(\star\star\star)_{12}^{j_1} \Big| \Big] 
    \leq C\Big(\mathbb{E}_{\mathbb{P}}[\mathcal{D}_N(t)] + \frac{1}{\sqrt{N}}\Big).
\end{align*}}

\subparagraph*{Estimate $(\star\star\star)_{2}^{j_1}$.}
By similar arguments, we obtain

{{\small} 
\begin{align*}
    \mathbb{E}_{\mathbb{P}}\Big[\Big|(\star\star\star)_{2}^{j_1} \Big| \Big] 
    &\leq C\Big(\mathbb{E}_{\mathbb{P}}[\mathcal{D}_N(t)] + \frac{1}{\sqrt{N}}\Big).
\end{align*}}

Putting all estimates together,

{{\small} 
\begin{align*}
    \mathrm{d}\mathbb{E}_{\mathbb{P}}[\mathcal{D}_{N}^{j}(t)] 
    &\leq C\Big(\|w^{cN} - w\|_{\square} + \frac{1}{\sqrt{N}} + \mathbb{E}_{\mathbb{P}}\big[\mathcal{D}_{N}(t)\big]\Big).
\end{align*}}

By summing over all classes, we have
{{\small} \begin{align*}
    \mathrm{d}\mathbb{E}_{\mathbb{P}}[\mathcal{D}_{N}^{}(t)] &\leq C\Big(\|w^{cN} - w\|_{\square} + \frac{1}{\sqrt{N}} + \mathbb{E}_{\mathbb{P}}\big[\mathcal{D}_{N}(t)\big]\Big),
\end{align*}}
and applying Gr\"onwall's inequality completes the proof. 

\end{proof}

\section{Application}\label{sec:Application}
In this section, we explore potential applications of the graphon quantum filtering system. 
To keep the presentation simple and avoid unnecessary technicalities, we restrict attention to a finite-dimensional Hilbert space $\mathbb{H}=\mathbb{C}^{d}$. 
In this case, $\mathcal{B}(\mathbb{H})=\mathcal{M}_d(\mathbb{C})$, the set of complex $d\times d$ matrices, which is isomorphic to $\mathbb{R}^{2d^2}$. Hence, the spaces can be identified as the standard Euclidean spaces. 
%As a result, we can invoke known results in Euclidean space without elaboration.

\subsection{Quantum state preparation}

An important objective in quantum control theory is quantum state preparation, which has been studied extensively, see e.g., \cite{stockton04deter,mirrahimiHandel07,sayrin11real,benoist14large,liang19exponential,cardona19continuous,amini24exponential}. This task is challenging due to the high dimensionality of the system, and various methods have been developed to manage this complexity (see e.g., \cite{handel05project,tezak17low,grigoletto25reduction}). Here we use the formalism of the mean-field limiting graphon as an approximation to the $N$-body system. 

As an illustration, we consider a concrete example of a graphon–qubit system with Ising-type interactions.

We begin by recalling the Pauli matrices and the ground and excited states of a two-level system:
\begin{center}
{{\small} \begin{align*}
    \rho_g := \begin{pmatrix}0 & 0 \\ 0 & 1\end{pmatrix}, \; 
    \rho_e := \begin{pmatrix}1 & 0 \\ 0 & 0\end{pmatrix}, \; \sigma_{x} := \begin{pmatrix}0 & -1 \\ 1 & 0\end{pmatrix}, \; \sigma_{z} := \begin{pmatrix}
        1 & 0 \\ 0 & -1 
    \end{pmatrix}. \end{align*}}
\end{center}

We fix the parameters of a system as follows:
{{\small} \begin{align*}
\tilde{H}&=0,\;\; \hat{H}=\sigma_x,\;\; 
L=\sigma_z,\;\; 
A^\rho=\tr(\rho\sigma_z)\sigma_z,\\
\eta &= 1, \;\; w(u,v) = uv, \;\; \gamma_{0,u} = \begin{pmatrix}
    \tfrac{1}{2} & 0 \\ 0 & \tfrac{1}{2}
\end{pmatrix},\\
\mathcal{J}_-&=[0,\frac{1}{2}),\;\; 
\mathcal{J}_+=[\frac{1}{2},1].
\end{align*}}

%\medskip

\begin{center}
\begin{tikzpicture}[scale=0.18]  % « pixel » = carré de 1 unit.
\begin{scope}[rotate=270,shift={(0,-16)}]

  % Grille externe (optionnelle)
  \draw (0,0) rectangle (16,16);

  % Boucles imbriquées sur les lignes (v) et colonnes (u)
  \foreach \i in {0,..., 15}  % colonne = u
  {%
    \foreach \j in {0,..., 15}  % ligne = v
    {%
      % coord. centre du bloc -> approx. (u,v)
      \pgfmathsetmacro{\u}{(\i+0.5)/16}
      \pgfmathsetmacro{\v}{(\j+0.5)/16}
      \pgfmathsetmacro{\shade}{100*\u*\v} % 0 = blanc ; 100 = noir

      % bloc (i,j) (attention : j = v = axe vertical)
      \fill[black!\shade] (\i,\j) rectangle ++(1,1);
    }%
  }%

  % Grille fine (optionnelle) :
  %\foreach \x in {1,...,\numexpr\N-1} \draw (\x,0) -- (\x,\N);
  %\foreach \y in {1,...,\numexpr\N-1} \draw (0,\y) -- (\N,\y);
\end{scope}
\end{tikzpicture}

\textbf{Schematic picture:} Graphon $w(u,v) = uv.$
\end{center}

%\medskip

Under an admissible control 
$\underline{\alpha}(t) = \{ \alpha_u(t), u \in I \}$, 
the graphon Belavkin system evolves, for all $u \in I$ and a.s. for $t \geq 0$, according to
\begin{equation}\label{eq:graphon-filter}
\begin{aligned}
\mathrm{d}\gamma_{t,u} &=
-\mathrm{i}\bigl[\alpha_u(t)\sigma_x
+u\Bigl(\int_{0}^{1}v\mathbb{E}_{\mathbb{P}}[z_{t,v}]\mathrm{d}v\Bigr)\sigma_z,\gamma_{t,u}\bigr]\mathrm{d}t  +\bigl(\sigma_z\gamma_{t,u}\sigma_z-\gamma_{t,u}\bigr)\mathrm{d}t \\
&\quad+\bigl(\sigma_z\gamma_{t,u}+\gamma_{t,u}\sigma_z-2z_{t,u}\gamma_{t,u}\bigr)\mathrm{d}W^{u}_t,
\end{aligned}
\end{equation}
where $(z_{t,u})_{t \geq 0}$ is the $z$-component of the qubit $\gamma_{t,u}$, defined as
$$ z_{t,u} := \tr(\gamma_{t,u}\sigma_{z}).$$

\subsubsection{Asymptotic behavior without control}
Setting $\underline{\alpha} \equiv 0$ turns into an
uncontrolled dynamics. We define the stochastic Lyapunov function $(V_{t,u})_{t \geq 0}$ by
\begin{align*}V_{t,u} := 1-z_{t,u}^2, \;\; \mathcal{V}_t(u,\omega) := V_{t,u}(\omega). 
\end{align*}
Applying It\^o's formula, we obtain
\begin{align*}
    \mathrm{d}z_{t,u}^2 &= 2z_{t,u}\mathrm{d}z_{t,u} + 4(1-z_{t,u}^2)^2\mathrm{d}t\\
    &= 4(1-z_{t,u}^2)^2\mathrm{d}t + 2z_{t,u}(1-z_{t,u}^2)\mathrm{d}W_t^{u},
\end{align*}
so that
\begin{align*}
    \mathrm{d}V_{t,u} &= -4(1-z_{t,u}^2)^2\mathrm{d}t - 2z_{t,u}\bigl(1-z_{t,u}^2\bigr)\mathrm{d}W^u_t.
\end{align*}
Taking the expectation, we have 
\begin{align*}
\mathbb{E}_{\mathbb{P}}[V_{t,u}] &= V_0 - 4 \int_{0}^{t}\mathbb{E}_{\mathbb{P}}[V_{s,u}^2]\mathrm{d}s.
\end{align*}
Then,
\begin{align*}
\mathbb{E}_{\mathcal{P}}[\mathcal{V}_{t}] &\leq V_0 - 4 \int_{0}^{t}\mathbb{E}_{\mathcal{P}}[\mathcal{V}_{s}^2]\mathrm{d}s.
\end{align*}

Since $0 \leq \mathcal{V}_{t} \leq 1 $, it follows that a.s. $\int_{0}^{\infty}\mathcal{V}_{s}^2\mathrm{d}s < \infty $. As $\mathcal{V}_{t}$ is a non-negative supermartingale, bounded below and with decreasing expectation, we conclude that a.s. $\mathcal{V}_{t} \to 0$. Thus $V_{t,u} \to 0$ for all $u \in I$, which implies that $\gamma_{t,u} \to \{\rho_e,\rho_g\}$.
Hence, the state of each particle asymptotically collapses to one of the projectors
$\rho_g$ or $\rho_e$.

\iffalse
\begin{remark}
We remark that the operators $L$ and $A^{\rho}$ commute, then they fulfills the non-demolition condition formulate in \cite{benoist14large}.  
\end{remark}
\fi 

\subsubsection{Deterministic state preparation}
For $u \in \mathcal{J}_{\pm}$, we adapt the feedback control strategy from \cite{tsumura07} and set
{{\small} \begin{equation}\label{eq:feedback-law}
\alpha_u(t)=
\begin{cases}
-8\mathrm{i}\tr\bigl([\sigma_x,\gamma_{t,u}]\rho_g\bigr)
+ 5\bigl(1-\tr(\gamma_{t,u}\rho_g)\bigr), & u\in\mathcal{J}_-,\\
-8\mathrm{i}\tr\bigl([\sigma_x,\gamma_{t,u}]\rho_e\bigr)
+ 5\bigl(1-\tr(\gamma_{t,u}\rho_e)\bigr), & u\in\mathcal{J}_+.
\end{cases}
\end{equation}
}
Under this feedback control, quantum state preparation becomes deterministic in the sense that particles with $u \in \mathcal{J}_-$ converge to the ground state $\rho_g$, while particles with $u \in \mathcal{J}_+$ converge to the excited state $\rho_e$.

\subsubsection{Simulations}
We simulate the dynamics of system \eqref{eq:graphon-filter} under two scenarios: the uncontrolled case with $\underline{\alpha} \equiv 0$ and the controlled case using the feedback strategy defined in \eqref{eq:feedback-law}. In both cases, we evaluate the fidelity $\mathcal{F}$ between the evolving quantum state $\gamma_{t,u}$ and the ground state $\rho_g$, defined by
 $$\mathcal{F}(\gamma,\rho):= \big(\tr(\sqrt{\sqrt{\rho}\gamma\sqrt{\rho}})\big)^{2}.$$ 
For particles with $u \in \mathcal{J}_{-}$, the fidelity with respect to $\rho_g$ is plotted in red, and for those with $u \in \mathcal{J}_{+}$ in blue.

The fidelity satisfies:
$$
\mathcal{F}(\gamma,\rho_g) = 
\begin{cases}
1, & \text{if } \gamma = \rho_g,\\
0, & \text{if } \gamma = \rho_e.
\end{cases}
$$

Figure~\ref{fig:reduction} illustrates the case of uncontrolled quantum state reduction, while Figure~\ref{fig:stabilization} shows the deterministic stabilization achieved under the control law~\eqref{eq:feedback-law}.

\begin{figure}[ht]
  \centering
  \includegraphics[width=10.0cm]{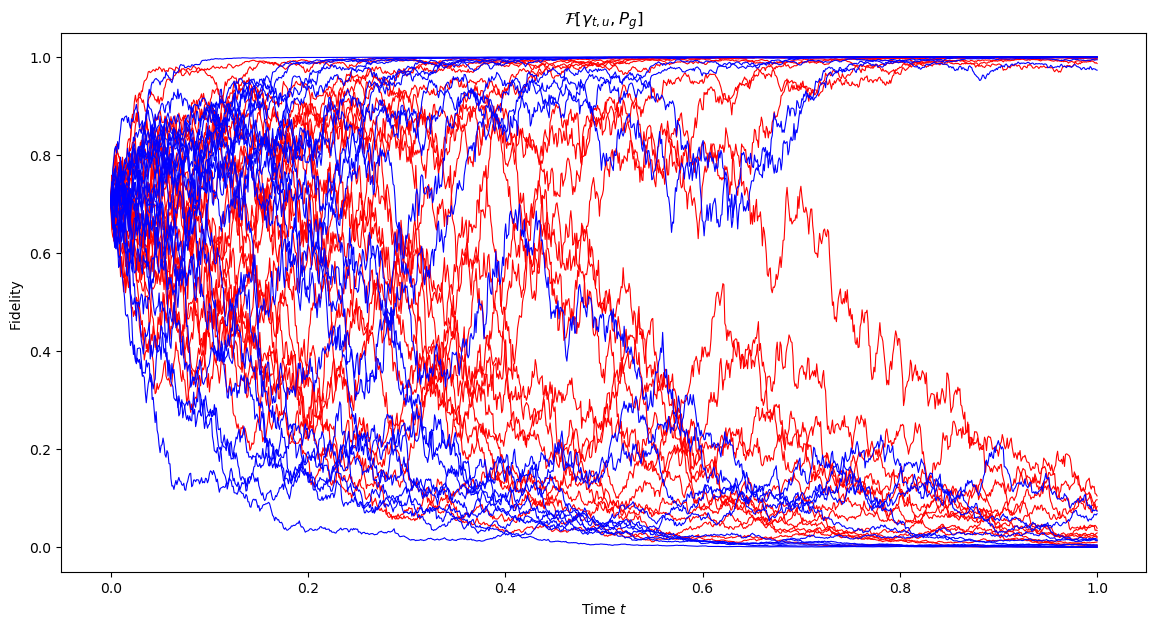}
  \caption{Asymptotic behavior under $\underline{\alpha} \equiv 0$.}
  \label{fig:reduction}
\end{figure}

\begin{figure}[ht]
  \centering
  \includegraphics[width=10.0cm]{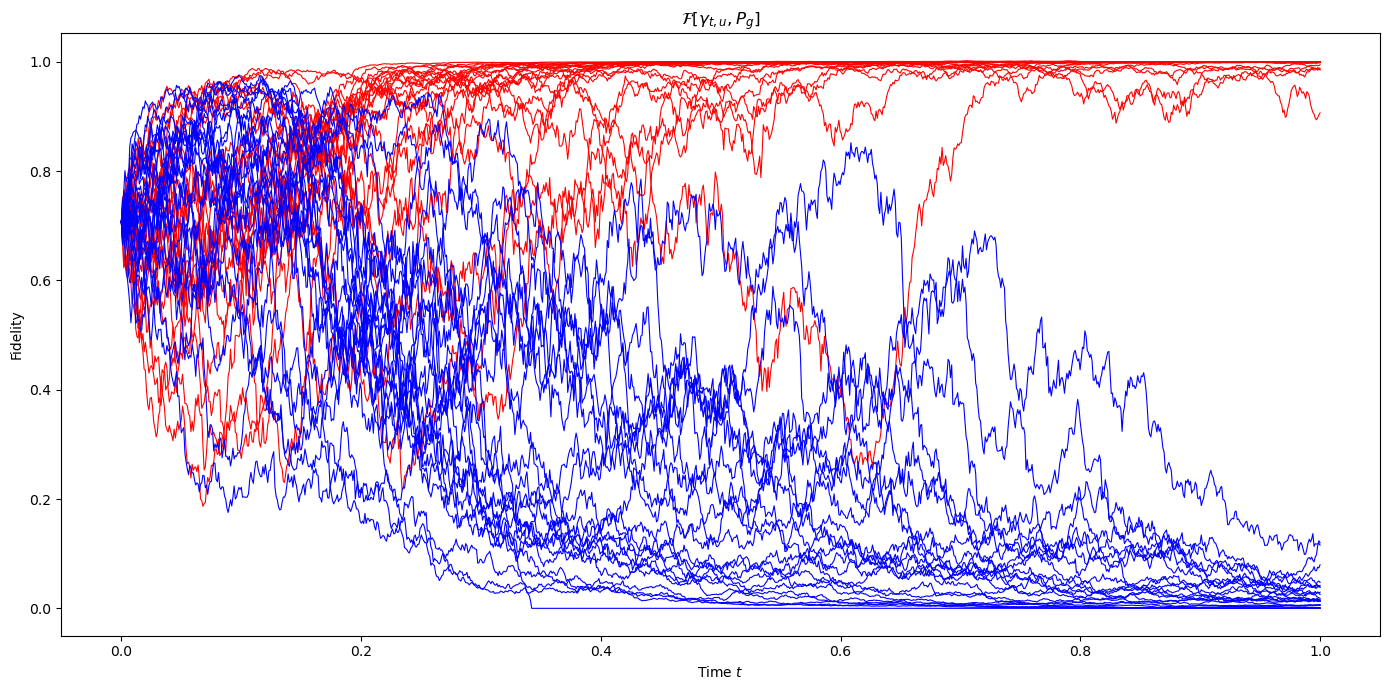}
  \caption{Quantum state preparation using the control law~\eqref{eq:feedback-law}.}
  \label{fig:stabilization}
\end{figure}

\iffalse
\begin{algorithm}[H]
  \SetAlgoLined
  \KwIn{$(T,w,(\gamma_{0,u})_{u \in I},H,L,A)$}
  \KwOut{$((\gamma_{t,u})_{u \in I})_{0 \leq t \leq T} $}

  \textbf{Initialization:} $k \gets 0$
  \textbf{Time step:} $\Delta t = \frac{T}{N}$\;

  \For{$k=1,\dots,\text{steps}$}{
    \For {$u=1,\dots,c$}{
    \For{$l=1,\dots,N$}{
    $ \tilde{\gamma}_{k+1,u} \gets \frac{1}{N}\sum_{l=1}^{N}{\gamma}_{k+1,u}^{l}$ \;
      $\gamma_{k+1,u}^{n} \gets \gamma_{k,u}^{n} + F(\dots)\Delta t + G(\dots)\sqrt{\Delta t}\mathcal{N}(0,1)$\;   
    }}
    $k \gets k+1$
  }
  \caption{Euler Scheme for the Graphon Quantum filter system}
\end{algorithm}
\fi

\subsection{Graphon quantum dynamic game}
Here we introduce a graphon quantum dynamic game with a continuum of particles indexed by $I$, in the same manner as in \cite{parise23graphon,aurell22sto}, where each particle goal is to choose the best admissible strategy. 

Let $\bar{\mathcal{U}}$ be the set of bounded measurable functions $\underline{\alpha}$, 
{{\small} 
\begin{align*}
    \underline{\alpha} &: I\times [0,T] \times \mathcal{S}(\mathbb{H}) \to [-U,U]\\
    &\quad (u,s,\rho) \longrightarrow \underline{\alpha}(u,s,\rho).
\end{align*}}
For each $u \in I$, we define $\mathcal{U}(u)$ as the set of bounded measurable functions,
{{\small} 
\begin{align*}
    {\alpha} &:  [0,T] \times \mathcal{S}(\mathbb{H}) \to [-U,U]\\
    &\quad (s,\rho) \longrightarrow {\alpha}(s,\rho).
\end{align*}}
A family $(\alpha_u)_{u \in I}$ of progressively measurable real-valued processes is called a control strategy, and the strategy is said to be admissible if there exists $\underline{\alpha} \in \bar{\mathcal{U}}$ such that $\alpha_{u} = \underline{\alpha}(u,.,.)$
for all $u \in I$.

Given an admissible control strategy $\underline{\alpha}$, the state of the controlled graphon quantum filtering system evolves as
{{\small} \begin{align*}
\mathrm{d}\gamma_{t,u}^{\underline{\alpha}} &= -\mathrm{i}\Big[\tilde{H} + \alpha_{u}(t)\hat{H} + \int_{0}^{1}w(u,v)A^{\mathbb{E}_{\mathbb{P}}[\gamma_{t,v}^{\underline{\alpha}}]}\mathrm{d}v, \gamma_{t,u}^{\underline{\alpha}} \Big]\mathrm{d}t + \Big(L\gamma_{t,u}^{\underline{\alpha}}L^{\dagger} - \frac{1}{2}\{L^{\dagger}L, \gamma_{t,u}^{\underline{\alpha}}\}\Big)\mathrm{d}t\\
&\quad\quad + \sqrt{\eta}\Big(L\gamma_{t,u}^{\underline{\alpha}} + \gamma_{t,u}^{\underline{\alpha}}L^{\dagger} - \tr\big((L+L^{\dagger})\gamma_{t,u}^{\underline{\alpha}}\big)\gamma_{t,u}^{\underline{\alpha}}\Big)\mathrm{d}W_t^{u}, \quad \quad \quad u \in [0,1].
\end{align*}} 
The expected cost for particle $u$ using admissible control $\beta \in \mathcal{U}(u)$, while other particles uses the  control strategy $\underline{\alpha}$ is denoted $\mathcal{J}_{u}$ and defined by:
{{\small}
\begin{align*}
    \mathcal{J}_{u}(\beta,\mathcal{A}_u^{\underline{\alpha}}) := \mathbb{E}_{\mathbb{P}}\Big[\int_{0}^{T}C\big(\beta,\gamma_{s,u}^{(\beta,\underline{\alpha}_{-u})},\mathcal{A}_{s,u}^{\underline{\alpha}}\big)\mathrm{d}s  +  F(\gamma_{T}^{\underline{\alpha}},\mathcal{A}_{T,u}^{\underline{\alpha}})\Big],
\end{align*}}
where we adopt the notation $(\beta,\underline{\alpha}_{-u})$ for a control strategy 
{{\small}
\begin{align*}
    (\beta,\underline{\alpha}_{-u})^{v} = \begin{cases}
        \underline{\alpha}(v), \quad &\text{if } v \neq u\\
        \beta, \quad &\text{if } v = u.
    \end{cases}
\end{align*}}
The measurable functions $C : \mathbb{R}\times\mathcal{M}_d(\mathbb{C})\times\mathcal{M}_d(\mathbb{C}) \to \mathbb{R}, \; \text{and \;} F : \mathcal{M}_d(\mathbb{C})\times\mathcal{M}_d(\mathbb{C}) \to \mathbb{R}$ are the current and terminal cost, respectively. The graphon-weighted aggregate $\mathcal{A}_{u}^{\underline{\alpha}} := (\mathcal{A}_{t,u}^{\underline{\alpha}})_{0 \leq t \leq T}$ is  defined by
\begin{align*}
    \mathcal{A}_{t,u}^{\underline{\alpha}} := \int_{0}^{1}w(u,v)\gamma_{t,v}^{\underline{\alpha}}\lambda(\mathrm{d}v), \quad \forall t \in [0,T].
\end{align*}

Thanks to exact law of large numbers \eqref{ELLN}, we can write the aggregate in terms of $\mathbb{E}_{\mathbb{P}}[\gamma_{t,v}^{\underline{\alpha}}$],
\begin{align*}
    \mathcal{A}_{t,u}^{\underline{\alpha}} &= \int_{0}^{1}w(u,v)\mathbb{E}_{\mathbb{P}}[{\gamma_{t,v}^{\underline{\alpha}}}]\lambda(\mathrm{d}v), \quad \forall t \in [0,T].
\end{align*}

We next adopt the following notion of  Nash equilibrium.
\begin{definition}
An admissible strategy control $\underline{\hat{\alpha}} $ is a graphon quantum game Nash equilibrium if
\begin{align*}
\mathcal{J}_u(\hat{\alpha}_u,\mathcal{A}_u^{\underline{\hat{\alpha}}}) &\leq \mathcal{J}_u(\beta,\mathcal{A}_u^{\underline{\hat{\alpha}}}), \quad \beta \in \mathcal{U}(u), \; \forall u \in I.
\end{align*}
\end{definition}

If a strategy control $\underline{\hat{\alpha}} $ achieves a Nash equilibirum, this means that no particle  can do better minimization by unilaterally changing control.

The question of the existence of equilibria can be approached using classical arguments from stochastic optimal control, such as the stochastic maximum principle or the dynamic programming principle; see e.g., \cite{yong99stochastic,bensoussan92stochastic}. We leave this and related issues to future work.

\iffalse
Under standard assumptions—$U$ compact; $C$ continuous and convex in the control; $F$ continuous; Lipschitz/linear-growth conditions ensuring well-posedness of \eqref{eq:GQFG}; and continuity of the best-response map in the aggregate—one can fix an aggregate flow $(\mathcal{A}_{t,\cdot})_{t\in[0,T]}$, solve each player’s control problem via HJB/SMP to obtain a measurable selector $\alpha^\star_u$, and close the loop by defining a map $\Phi$ sending aggregates to the induced aggregates. Compactness and continuity of $\Phi$ yield a fixed point by Schauder, hence a graphon Nash equilibrium. Approximate Nash equilibria for large finite graphs then follow by standard discretization/graphon-consistency arguments.
\fi

%CONCLUSION 
{\section{Conclusion and perspectives}\label{sec:concludes} 
 We have introduced a framework for studying continuously observed systems composed of a large number of interacting quantum particles with heterogeneous interactions, and showed that the resulting graphon-based system of quantum filtering equations is well-posed.
 We established a convergence result in the case of block-wise interactions with bosonic-by-class structure. A natural direction for future research is to further investigate the mean-field limit for general non-exchangeable systems. 
 
 In addition, while our analysis assumes a sequence of dense graphs, this hypothesis can be relaxed to include moderately sparse graphs, using similar arguments as in \cite{bayraktar23graphon,crucianelli24interacting}. 
{This can be seen as a bond percolation problem, where the $N$-body Hamiltonian is modified as   {{\small} \begin{align*}
    \mathbf{H}^{N} &:= \sum_{q \in V_N}\mathbf{\tilde{H}}_{q} + \frac{1}{N\mathfrak{p}_N}\sum_{q \in V_N}\sum_{p > q} \xi^{N}_{pq}\mathbf{A}_{pq},
\end{align*}}
with percolation parameter $\mathfrak{p}_N$ a sequence of positive real numbers that controls the sparsity of the graph, and such that $\mathfrak{p}_N \to 0$ while $N\mathfrak{p}_N \to \infty$ as $N \to \infty$.}
 
 Moreover, all results in this paper can be directly adapted to the counting measurement case (see the corresponding equation in the introduction) when the coupling operator $L$ is unitary, i.e., $L^{\dagger} = L^{-1}.$ For more general coupling operator, however, the mean-field limit for $N$-particle system remains an open problem, see the  discussion in \cite[Remark 5]{kolokoltsov21law}. We leave this and related questions for future work.}

\appendix

\section{Exchangeable quantum particles}\label{sec:appA}

The question of indistinguishability of classical or quantum particles is addressed in many textbooks, see for example \cite{gough18quantumfield}. We briefly review here the mains ingredients in the quantum setting.

Intuitively,  two particles are said to be indistinguishable if exchanging them does not change the probability of outcomes. The idea can be formalized in the following way:
{{\small} \begin{align*} \underline{P}_{N}(E_{1}\times\dots\times E_{N}) = \underline{P}_{N}(E_{\sigma(1)}\times\dots\times E_{\sigma(N)}), \quad \forall (E_1,\dots,E_N) \in \mathfrak{E}^{N},
\end{align*}}
where $\underline{P}_{N}$ is a probability measure on $\mathfrak{E}^{N}$, with  $\mathfrak{E}^{N}$ being the $N-$fold Cartesian product of a measurable space $\mathfrak{E}$.

Let $\boldsymbol{\Psi}^{N}$ denote the wave-function of $N$ particles. This describes the state of the system. The modulus square $|\boldsymbol{\Psi}^{N}|^2$ is interpreted as the probability density of finding the particles at $(x_1, \dots, x_N)$:
{{\small} \begin{align*}\underline{P}_{N}(x_1,\dots,x_N) := |\boldsymbol{\Psi}^{N}(x_1,\dots,x_N)|^{2}.
\end{align*}}
 If the particles are indistinguishable, the probability density must be invariant under any permutation $\sigma$, 
{{\small} \begin{align*} \underline{P}_{N}(x_{\sigma(1)},\dots,x_{\sigma(N)}) &= \underline{P}_{N}(x_{1},\dots,x_{N}),\\
\Big|\boldsymbol{\Psi}^{N}(x_{\sigma(1)},\dots,x_{\sigma(N)})\Big|^2 &= \Big|\boldsymbol{\Psi}^{N}(x_{1},\dots,x_{N}) \Big|^{2}. 
\end{align*}}
From this constraint two types of wave-function (and particles) exist: 
\begin{itemize}
    \item Bosons: The wave-function is  symmetric:
    {{\small} \begin{align*}\boldsymbol{\Psi}^{N}(x_{\sigma(1)},\dots,x_{\sigma(N)}) = \boldsymbol{\Psi}^{N}(x_{1},\dots,x_{N}).\end{align*}}
    \item Fermions: The wave-function is anti-symmetric:
    {{\small} \begin{align*}\boldsymbol{\Psi}^{N}(x_{\sigma(1)},\dots,x_{\sigma(N)}) = (-1)^{\sigma}\boldsymbol{\Psi}^{N}(x_{1},\dots,x_{N}),\end{align*}}
\end{itemize}
where $(-1)^{\sigma}$ denotes the sign of permutation. 
\begin{remark}
In both cases of Bosons and Fermions, symmetry and anti-symmetry of the wave-function imply the symmetry of the density operator, i.e., 
{{\small} \begin{align*}
\boldsymbol{\rho}^{N}(x_1,\dots,x_N;y_1,\dots,y_N) &= \boldsymbol{\Psi}^{N}(x_1,\dots,x_N)\overline{\boldsymbol{\Psi}^{N}(y_1,\dots,y_N)},\\
\boldsymbol{\rho}^{N}(x_{\sigma(1)},\dots,x_{\sigma(N)};y_{\sigma(1)},\dots,y_{\sigma(N)}) &=   \boldsymbol{\Psi}^{N}(x_{\sigma(1)},\dots,x_{\sigma(N)})\overline{\boldsymbol{\Psi}^{N}(y_{\sigma(1)},\dots,y_{\sigma(N)})}\\
&= \boldsymbol{\Psi}^{N}(x_1,\dots,x_N)\overline{\boldsymbol{\Psi}^{N}(y_1,\dots,y_N)}\\
&= \boldsymbol{\rho}^{N}(x_1,\dots,x_N;y_1,\dots,y_N).
\end{align*}}
\end{remark}

\section{Link between the linear and non-linear forms}\label{sec:appB}
In this section, we provide a strong solution to the linear system \eqref{lineargraphonpara} and use it to construct a weak solution of the non-linear system \eqref{graphonpara}.

We start by stating  an estimate lemma, first derived in \cite{kolokoltsov25quantumstoeq}, which is useful for ensuring well-posedness in Liouville space. For completeness, we provide here a shorter proof that avoids the approximation arguments used in the original reference. 

\begin{lemma}[Estimate lemma]\label{lem:estim}
Let $A$ be a bounded self-adjoint operator and $B$ a bounded operator. Then:
\begin{itemize}
    \item[(i)] $2|\tr(ABAB^{\dagger})| \leq \tr(A^2(BB^\dagger + B^\dagger B))$,
    \item[(ii)] $|\tr(AB^{\dagger}AB^{\dagger} + ABAB)| \leq \tr(A^2(BB^\dagger + B^\dagger B))$.
\end{itemize}
\end{lemma}

\begin{proof}
For the first estimate, applying the Cauchy-Schwarz inequality, we get
{{\small} $$
\big| \tr(B^{\dagger}ABA) \big| \leq \sqrt{\tr(B^{\dagger}A^{2}B)} \sqrt{\tr(BA^{2}B^{\dagger})} = \sqrt{\tr(A^{2}BB^{\dagger})} \sqrt{\tr(A^{2}B^{\dagger}B)}.
$$}
Using the arithmetic-geometric mean inequality
$ab \leq \frac{a^2 + b^2}{2}, \; \text{for all real } a, b,$ we obtain 
{{\small} $$
\big|\tr(ABAB^\dagger)\big| \leq \tfrac{1}{2}\Bigg(\tr(A^2 B B^\dagger) + \tr(A^2 B^\dagger B)\bigg).
$$}

For the second estimate, use the triangular inequality:
{{\small}\begin{align*}
\Big|\tr(AB^{\dagger}AB^{\dagger} + ABAB)\Big| &\leq \Big|\tr(AB^{\dagger}AB^{\dagger})\Big| + \Big|\tr(ABAB)\Big|.
\end{align*}}
By the Cauchy-Schwarz inequality,
{{\small}\begin{align*}
\Big|\tr(ABAB)\Big| = \Big|\tr(AB(B^{\dagger}A)^{\dagger})\Big| \leq \sqrt{\tr(ABB^{\dagger}A)}\sqrt{\tr(B^{\dagger}A^{2}B)} = \tr(A^{2}BB^{\dagger}),
\end{align*}}
and similarly,
{{\small} $$\Big|\tr(AB^{\dagger}AB^{\dagger})\Big| \leq \tr(A^{2}B^{\dagger}B).
$$}Hence, 
{{\small} $$
\Big|\tr(AB^{\dagger}AB^{\dagger} + ABAB)\Big| \leq \tr(A^{2}(B^{\dagger}B + BB^{\dagger})).
$$}

\end{proof}

{Throughout this part we work on a Fubini extended probability space 
$
(I\times\Omega,\mathcal{V},\mathcal{Q})$ of the usual product space 
$(I\times\Omega,\mathcal{I}\otimes\mathcal{F},\lambda\otimes\mathbb{Q})$. 
This space carries a family of e.p.i. Wiener processes $
\mathbf{Y} = \{Y^{u} : u \in I\},
$
and we fix a bounded operator $L \in \mathcal{B}(\mathbb{H})$.} 

%\medskip
Under this standing framework, we state the following results.

\begin{proposition}[Linear system SDE on Liouville space]
\label{linearwellposdness}
Let $\mathbf{H} = \{ H_{u} : u \in I \}$ be a measurable family such that 
$$
t \mapsto H_{t,u} \in \mathcal{B}(\mathbb{H}), \qquad 
\sup_{0 \leq s \leq T, \, u \in I} \| H_{s,u} \| < \infty,
$$
and assume $H_{t,u} = H_{t,u}^{\dagger}$ for all $(t,u) \in [0,T]\times I$.

%$$
%   \sup_{0 \leq s \leq T, \; u \in I}\|H_{s,u}\| < \infty,
%$$
%and assume %$H_{t,u}=H_{t,u}^{\dagger}$. 
Fix  $\eta\in (0,1]$ and an initial  density operator $\rho_0 \in \mathcal{S}(\mathbb{H})$. Consider  the  Liouville–space valued stochastic differential equation
{{\small}\begin{align}\label{linear}
\mathrm{d}\mathrm{R}_t &= \mathcal{L}_t(\mathrm{R}_t)\mathrm{d}t + \sqrt{\eta}\mathcal{R}(\mathrm{R}_t)\mathrm{d}\mathbf{Y}_t,\\
    \mathrm{R}_{0} &= \rho_{0} \in \mathcal{B}_2(\mathbb{H}).\nonumber
\end{align}
}
The drift $\mathcal{L}_t$ is defined by 
{{\small} \begin{align*}
\mathcal{L}_{t}(R) = -\mathrm{i}[\mathbf{H}_{t},R] + \big(LR L^{\dagger} - \frac{1}{2}\{L^{\dagger}L,R\}\big),
\end{align*}}
and the diffusion $\mathcal{R}$ by 
\begin{align*}
\mathcal{R}_{}(R) = LR  + R{L^{\dagger}}.
\end{align*}
Coordinatewise, the equation reads
\begin{equation*}
\begin{aligned}
   \mathrm{d}\varrho_{t,u}
   &= -\mathrm i[H_{t,u},\varrho_{t,u}]\mathrm{d}t
      + \Bigl(L\varrho_{t,u}L^{\dagger}-\frac{1}{2}\{L^{\dagger}L,\varrho_{t,u}\}\Bigr)\mathrm{d}t
      + \sqrt\eta\bigl(L\varrho_{t,u}+\varrho_{t,u}L^{\dagger}\bigr)\mathrm{d}Y_t^{u},\\
   \varrho_{0,u}&=\rho_0 \in \mathcal{B}_2(\mathbb{H}). 
\end{aligned}
\end{equation*}
Then the equation has a unique global solution. If $\rho_{0} \in \mathcal{S}(\mathbb{H})$, then positivity is preserved and
\begin{align*}
    \tr(\varrho_{t,u}) &= 1 + \sqrt{\eta}\int_{0}^{t}\tr\big((L+L^{\dagger})\varrho_{s,u}\big)\mathrm{d}Y_s^{u}, \quad \forall u \in I.
\end{align*}
\end{proposition}

\begin{proof}
First, the existence and uniqueness are addressed directly via  standard arguments on stochastic differential equations in infinite-dimensional Hilbert spaces \cite{SDE14inf}.

Positivity is more technical. The factor $\eta \in (0,1]$ represents measurement efficiency and when $\eta < 1$ the observation is incomplete. From a mathematical point of view, one may enlarge the probability space $(I\times\Omega, \mathcal{V}, \mathcal{Q})$ to incorporate a complete observation. Then we can consider the following auxillary system:

\begin{align*} 
\mathrm{d}\bar{\varrho}_{t,u} &= -\mathrm{i}[H_t,\bar{\varrho}_{t,u}]\mathrm{d}t + \big(L\bar{\varrho}_{t,u}L^{\dagger} - \frac{1}{2}\{L^{\dagger}L,\bar{\varrho}_{t,u}\}\big)\mathrm{d}t + \big(L\bar{\varrho}_{t,u} + \bar{\varrho}_{t,u}L^{\dagger}\big)\mathrm{d}X_t^{u},
\end{align*}
where $X_t^{u} = \sqrt{\eta}Y_t^{u} + \sqrt{1 -\eta}Z_t^{u} $ and $(Z^{u}_t)_{t \geq 0}$ is an independent Wiener process.

The process $(\bar{\varrho}_{t,u})_{t \geq 0}$ restricted to $\sigma({Y^{u}_t})$ is equivalent to $({\varrho}_{t,u})_{t \geq 0}$, i.e., 
$$ \mathbb{E}_{\mathbb{Q}}[\bar{\varrho}_{t,u}|\sigma({Y}^u_t)] \sim {\varrho}_{t,u}, \; \forall t \geq 0.$$

Since $\bar{\varrho}_{0} = {\varrho}_{0} = \rho_{0}\in \mathcal{S}(\mathbb{H})$, there exists a sequence of elements in $\mathbb{H}$, $\{(\varphi_{k})_{k \geq 1}\}$ such that~\footnote{Every quantum state can be written as a convex combination of pure states.} 
\begin{align*}\rho_{0} &= \sum_{k=1}^{\infty}\lambda_{k}\ket{\varphi_{0,k}}\bra{\varphi_{0,k}},
\end{align*}
with $\{(\lambda_{k})_{k \geq 1}\} \in l^{2}(\mathbb{N},\mathbb{R}_{+})$ and $
\sum_{k=1}^{\infty}\lambda_{k} = 1$.

Consider a finite-dimensional truncation of the Hilbert space $\mathbb{H}_{d} := P_d\mathbb{H}$ of dimension $d$, and  a finite dimensional truncation  $\rho_{0}^{d}$ of  $\rho_{0}$, given by 

{{\small} \begin{align*} \rho_{0}^{d} = \sum_{k=1}^{d}\lambda_{k}^{d}\ket{\varphi_{0,k}^d}\bra{\varphi_{0,k}^d} \longrightarrow \rho_{0} = \sum_{k=1}^{\infty}\lambda_{k}\ket{\varphi_{0,k}}\bra{\varphi_{0,k}}.
\end{align*}}
Then $({\bar{\varrho}_{t,u}^{d}})_{t \geq 0}$ solves the following {matrix} valued stochastic differential equation~\footnote{We denote by the same symbols $H_{t,u}, L \in \mathcal{B}(\mathbb{H})$ and their projector in $\mathcal{B}(\mathbb{H}_d)$.}: 

\begin{align} \mathrm{d}{\bar{\varrho}_{t,u}^{d}} &= -\mathrm{i}[H_{t}, {\bar{\varrho}_{t,u}^{d}}]\mathrm{d}t + \Big(L{\bar{\varrho}_{t,u}^{d}}L^{\dagger} - \frac{1}{2}\{L^{\dagger}L, {\bar{\varrho}_{t,u}^{d}}\}\Big)\mathrm{d}t + \Big(L{\bar{\varrho}_{t,u}^{d}} + {\bar{\varrho}_{t,u}^{d}}L^{\dagger}\Big)\mathrm{d}X_t^{u},\nonumber\\
{\bar{\varrho}_{0,u}^{d}} &= \rho_{0}^{d} =  \sum_{k=1}^{d}\lambda_{k}^d\ket{\varphi_{0,k}^{d}}\bra{\varphi_{0,k}^{d}}\in \mathcal{S}(\mathbb{H}_d).
\label{finitecompletelinearbelavkin}
\end{align}
Consider the following {vector} valued (which represents pure states)  SDE: 
\begin{align}
\mathrm{d}\varphi_{t,u,k}^{d} &= -\mathrm{i} H_{t,u} \varphi_{t,u,k}^{d}\mathrm{d}t - \frac{1}{2}L^{\dagger}L\varphi_{t,u,k}^{d}\mathrm{d}t + L\varphi_{t,u,k}^{d}\mathrm{d}X_t^{u}, \; 1 \leq k \leq d.
\end{align}

By Itô formula, $\ket{\varphi_{t,u,k}^d}\bra{\varphi_{t,u,k}^d} $ is a  solution of \eqref{finitecompletelinearbelavkin}. By a classical result \cite[Proposition 2.8]{bar09}, there exists an {invertible} propagator $(\mathcal{U}_{t,u,k}^{d})_{t \geq 0}$ such that
$$ \varphi_{t,u,k}^{d} =  \mathcal{U}_{t,u,k}^{d}[\varphi_{0}^{d}].$$

The invertibility of $\mathcal{U}_{t,u,k}^{d} $ and $\rho_{0}^{d} \neq 0$ guarantee the positivity of $\bar{\varrho}^{d}_{t,u}$ for all $t \in [0,T]$. In order to show the positivity of $(\bar{\varrho}_t)_{t\geq 0}$, it is sufficient to extract  a sequence $(\bar{\varrho}_t^{d})_{t \geq 0}$ that converges in the weak-sense, i.e., for some sequence  $(d_j)_{j \geq 0}$,

$$ \forall \chi \in \mathbb{H}, \forall t \in [0,T], \langle \chi, (\bar{\varrho}_{t,u} - \bar{\varrho}_{t,u}^{d_j} )\chi\rangle \to 0.$$
To prove this, we show that
\begin{align}\label{keyestimate} \sup_{d} \mathbb{E}\Big[ \sup_{0 \leq t \leq T} |\bar{\varrho}_{t,u}^{d}| \Big] <  \infty.
\end{align}

To show \eqref{keyestimate}, let us denote
$\tr(\bar{\varrho}_{t,u}^{d}) = \sum_{k=1}^{d}\lambda_{k}^d\|\varphi_{t,u,k}^d\|^2$ and $\quad M_{t,k}^{d,u}:=\|\varphi_{t,u,k}^{d}\|^{2}$.  
From It\^o's formula,
$$
   \mathrm{d}M_{t,k}^{d,u}=2\Re\langle L\varphi_{t,u,k}^{(d)},\varphi_{t,u,k}^{(d)}\rangle\mathrm{d}X_t^{u},\qquad \langle M^{d,u}\rangle_t\leq 4\|L\|^{2}\int_0^{t}\|\varphi_{s,u,k}^{d}\|^{4}\mathrm{d}s.
$$
Using the Burkholder–Davis–Gundy inequality followed by Gr\"onwall's lemma, we obtain
$$
   \mathbb{E}_{\mathbb{Q}}\Big[\sup_{0 \leq s\leq T}\|\varphi_{s,u,k}^{d}\|^{2}\Big] \leq C<\infty,
$$
 and therefore
$$
   \sup_{d}\mathbb{E}_{\mathbb{Q}}\bigl[\sup_{0 \leq t \leq T}\tr(\bar\varrho_{t,u}^{d})\bigr]\le C.
$$
Then $\bar\varrho^{d_j}_{\cdot,u}$ converges weakly to a positive operator valued process $\bar\varrho_{\cdot,u}$.  Hence, $\bar\varrho_{t,u}$ is positive, and consequently so is $\varrho_{t,u}$.

In order to conclude, we have to show that $R_t \in L^2_{\mathcal{L}_2(\mathbb{H})}(I\times\Omega,\mathcal{Q})$. We proceed “à la Grönwall”:
{{\small} \begin{align*}
\mathrm{d}\tr(\varrho_{t,u}^{2}) &= \tr\big(-\mathrm{i}[H_{t,u},\varrho_{t,u}]\varrho_{t,u} - \mathrm{i}\varrho_{t,u}[H_{t,u},\varrho]\big)\mathrm{d}t\\
+ \tr\big((L\varrho_{t,u}L^{\dagger}& - \frac{1}{2}L^{\dagger}L\varrho_{t,u} - \frac{1}{2}\varrho_{t,u}L^{\dagger}L)\varrho_{t,u} 
+ \varrho_{t,u}(L\varrho_{t,u}L^{\dagger} - \frac{1}{2}L^{\dagger}L\varrho_{t,u} 
- \frac{1}{2}\varrho_{t,u}L^{\dagger}L)\big)\mathrm{d}t,\\
+ \eta\tr\big(&(L\varrho_{t,u} + \varrho_{t,u}L^{\dagger})(L\varrho_{t,u} + \varrho_{t,u}L^{\dagger})\big)\mathrm{d}t 
+ (\dots)\mathrm{d}Y_t^{u}\\[6pt]
\frac{\mathrm{d}\mathbb{E}_{\mathbb{Q}}[\tr(\varrho_{t,u}^{2})]}{\mathrm{d}t} 
&=\! \mathbb{E}_{\mathbb{Q}}\Biggl[2\tr\bigl(\varrho_{t,u}L\varrho_{t,u}L^{\dagger}\! - \!\varrho_{t,u}^{2}L^{\dagger}L\bigr) 
+ \eta\tr\bigl(L\varrho_{t,u}L\varrho_{t,u} + L^{\dagger}\varrho_{t,u}L^{\dagger}\varrho_{t,u} 
+ 2L^{\dagger}L\varrho_{t,u}^{2}\bigr)\Biggr],\\[6pt]
\frac{\mathrm{d}\mathbb{E}_{\mathbb{Q}}[\tr(\varrho_{t,u}^{2})]}{\mathrm{d}t} 
&=\! \mathbb{E}_{\mathbb{Q}}\!\Biggl[\tr\Bigl(2\varrho_{t,u}L\varrho_{t,u}L^{\dagger}
+ \eta\bigl(L\varrho_{t,u}L\varrho_{t,u} 
+ L^{\dagger}\varrho_{t,u}L^{\dagger}\varrho_{t,u}\bigr)\Bigr) 
- 2(1-\eta)\tr\bigl(L^{\dagger}L\varrho_{t,u}^{2}\bigr)\Biggr],
\end{align*}}
and using Lemma~\ref{lem:estim}, 
{{\small} \begin{align*}
\mathrm{d}\mathbb{E}_{\mathbb{Q}}[\tr(\varrho_{t,u}^{2})] 
&\leq \mathbb{E}_{\mathbb{Q}}\Biggl[(1 + \eta)\tr\bigl(\varrho_{t,u}^{2}\bigl(L^{\dagger}{L} + {L}{L}^{\dagger}\bigr)\bigr) 
+ 2(1 - \eta)\tr\bigl(L^{\dagger}L\varrho_{t,u}^{2}\bigr)\Biggr]\mathrm{d}t\\
&\leq 2(1+\eta)\|L\|^{2}\mathbb{E}_{\mathbb{Q}}\bigl[\tr(\varrho_{t,u}^{2})\bigr]\mathrm{d}t
+ 4(1-\eta)\|L\|^{2}\mathbb{E}_{\mathbb{Q}}\bigl[\tr(\varrho_{t,u}^{2})\bigr]\mathrm{d}t\\
&\leq (6 - 2\eta)\|L\|^{2}\mathbb{E}_{\mathbb{Q}}\bigl[\tr(\varrho_{t,u}^{2})\bigr]\mathrm{d}t.
\end{align*}}
Thanks to Grönwall's inequality,
{{\small}
\begin{align*}
    \mathbb{E}_{\mathbb{Q}}[\tr(\rho_{t,u}^2)] &\leq e^{\big((6-2\eta)\|L\|^2\big)t}.
\end{align*}
}
Then,
{{\small}
\begin{align*}
    \mathbb{E}_{\mathcal{Q}}[\|\mathrm{R}_t\|^2_2] &= \int_{I\times\Omega}\|\mathrm{R}_t(u,\omega)\|^2_2\mathcal{Q}(\mathrm{d}u,\mathrm{d}\omega),\\
    &= \int_{I}\mathbb{E}_{\mathbb{Q}}[\tr(\varrho_{t,u}^2)]\lambda(\mathrm{d}u),\\
    &\leq e^{\big((6-2\eta)\|L\|^2\big)T} < \infty.
\end{align*}
}

This completes the proof. 
\end{proof}

By the Girsanov theorem stated in Proposition \ref{girsanovlemma}, we establish a link between the linear and nonlinear equations.

\begin{proposition}\label{lineartononlinear}
Consider $\mathrm{R}_t = \{\varrho_{t,u} : u \in I\}$ a stochastic process valued on $\mathcal{L}_{2}(\mathbb{H})$, issued from the linear quantum stochastic master equation \eqref{linear}. Then the   process $\bar{\mathrm{R}}_t = \{\rho_{t,u} : u \in I\}$, with $ \rho_{t,u} := \frac{\varrho_{t,u}}{\tr(\varrho_{t,u})}$, is a solution to the non-linear quantum stochastic master equation
{{\small} \begin{align}
\mathrm{d}\rho_{t,u} =& -\mathrm{i}[H_{t,u},\rho_{t,u}]\mathrm{d}t + (L\rho_{t,u}L^{\dagger} - \frac{1}{2}\{L^{\dagger}L,\rho_{t,u}\})\mathrm{d}t \nonumber \\
&+ \sqrt{\eta}\Big(L\rho_{t,u} + \rho_{t,u}L^{\dagger}-\tr((L+L^{\dagger})\rho_{t,u})\rho_{t,u}\Big)\mathrm{d}W_t^{u},
\label{normalizebelavkin}
\end{align}}
where the  process $\mathbf{W} = \big\{W^{u} : u \in I \big\}$ is a   Wiener process under the new probability measure
$\tilde{\mathcal{P}}$, defined by the Girsanov transform
$$ \mathbf{W}_t := \mathbf{Y}_t - \sqrt{\eta}\int_{0}^{t}\tr\big((L+L^{\dagger})\bar{\mathrm{R}}_{s}\big)\mathrm{d}s.$$
The measure $\tilde{\mathcal{P}}$ has Radon-Nikodym derivative $\mathcal{E}_{t} : (u,\omega) \to \mathcal{E}_{t,u}(\omega)$  with respect to $\mathcal{Q}$, given by
{{\small} \begin{align*}
{\mathcal{E}}_{t,{u}} :=   \exp\Big\{ \int_{0}^{t}\sqrt{\eta}\tr\big((L+L^{\dagger})\rho_{s,u}\big)\mathrm{d}Y_s^{u} - \frac{\eta}{2}\int_{0}^{t}\tr^{2}\big((L+L^{\dagger})\rho_{s,u}\big)\mathrm{d}s\Big\}.
\end{align*}}
\end{proposition}

\begin{proof}
We proceed “\`a la It\^o”  to derive the nonlinear Equation \eqref{normalizebelavkin} from the linear dynamics:  
{{\small} \begin{align*}
&\mathrm{d}\Big(\frac{\varrho_{t,u}}{\tr(\varrho_{t,u})}\Big) = \frac{\mathrm{d}\varrho_{t,u}}{\tr(\varrho_{t,u})} - \frac{\varrho_{t,u}\mathrm{d}\tr(\varrho_{t,u})}{\tr^{2}(\varrho_{t,u})} - \Big(\mathrm{d}\varrho_{t,u}\Big)\Big(\frac{\mathrm{d}\tr(\varrho_{t,u})}{\tr^{2}(\varrho_{t,u})}\Big) + \frac{\varrho_{t,u}\tr^{2}((L+L^{\dagger})\varrho_{t,u})\mathrm{d}t}{\tr^{3}(\varrho_{t,u}^{})}\\
=& -\mathrm{i}[H_{t,u},\rho_{t,u}]\mathrm{d}t + (L\rho_{t,u}L^{\dagger} - \frac{1}{2}\{L^{\dagger}L,\rho_{t,u}\})\mathrm{d}t+ \sqrt{\eta}(L\rho_{t,u} + \rho_tL^{\dagger})\mathrm{d}Y_t^{u} \\
-& \sqrt{\eta}\tr((L+L^{\dagger})\rho_{t,u})\rho_{t,u}\mathrm{d}Y_t^{u} - {\eta}\tr((L+L^{\dagger})\rho_{t,u})(L\rho_{t,u} + \rho_{t,u}L^{\dagger})\mathrm{d}t + \eta\tr^{2}((L+L^{\dagger})\rho_{t,u})\rho_{t,u}\mathrm{d}t\\
=& \mathcal{L}_t(\rho_{t,u})\mathrm{d}t + \sqrt{\eta}\Big(L\rho_{t,u} + \rho_{t,u}L^{\dagger}-\tr((L+L^{\dagger})\rho_t)\rho_{t,u}\Big)\mathrm{d}Y_t^{u}\\
&\quad -  \sqrt{\eta}\Big(L\rho_{t,u} + \rho_{t,u}L^{\dagger}-\tr((L+L^{\dagger})\rho_{t,u})\rho_{t,u}\Big)\sqrt{\eta}\tr((L+L^{\dagger})\rho_{t,u})\mathrm{d}t\\
=& \mathcal{L}_{t,u}(\rho_{t,u})\mathrm{d}t + \sqrt{\eta}\Big(L\rho_{t,u} + \rho_tL^{\dagger}-\tr((L+L^{\dagger})\rho_{t,u})\rho_{t,u}\Big)\underbrace{(\mathrm{d}Y_t^{u} 
- \sqrt{\eta}\tr((L+L^{\dagger})\rho_{t,u})\mathrm{d}t)}_{\mathrm{d}W_t^{u}},
\end{align*}}
where $\mathcal{L}_{t,u}(\rho) = -\mathrm{i}[H_{t,u},\rho]+ (L\rho L^{\dagger} - \frac{1}{2}\{L^{\dagger}L,\rho\}).$

To apply Proposition~\ref{girsanovlemma} and show that the process
{{\small} \begin{align*}
\mathbf{W}_t^{} := \mathbf{Y}_t^{} - \sqrt{\eta}\int_{0}^{t}\tr\big((L+L^{\dagger})\bar{\mathrm{R}}_{s}\big)\mathrm{d}s
\end{align*}}
is a Wiener process under the new probability measure $\tilde{\mathcal{P}}$, it is sufficient to verify Novikov's condition.
Since $\Big|\tr\big((L+L^{\dagger}){\rho}_{s,u}\big)\Big| \leq 2\|L\|$ and $\lambda(I)=1$,
then
\begin{align*}
\mathbb{E}_{\mathcal{Q}}\big[\exp\{\frac{\eta}{2}\int_{0}^{T}\tr^2\big((L+L^{\dagger})\bar{\mathrm{R}}_{s}\big)\mathrm{d}s \} \big] &= \int_{I}\mathbb{E}_{\mathbb{Q}}\big[\exp\{\frac{\eta}{2}\int_{0}^{T}\tr^2\big((L+L^{\dagger})\rho_{s,u}\big)\mathrm{d}s \} \big]\lambda(\mathrm{d} u)\\ 
&\leq \exp\Big\{2{\eta}{}\|L\|_{}^{2}T \Big\} < \infty.
\end{align*}
Thus, Novikov’s condition is satisfied, and hence the Girsanov theorem applies.
\end{proof}

\section*{Acknowledgments}
This work was supported by the ANR projects Q-COAST (ANR-19-CE48-0003) and IGNITION (ANR-21-CE47-0015). N-H.A. and S.C. thank the \emph{Institute for Mathematical and Statistical Innovation} (IMSI), supported by the National Science Foundation (Grant No. DMS-1929348), where part of this research was conducted. H. A. acknowledges support from NSF award No. 2534856. G.G. acknowledges financial support by \emph{Institut Europlace de Finance}. 

\bibliographystyle{plain}
\bibliography{bibliography}

\providecommand{\noopsort}[1]{}
\begin{thebibliography}{10}

\bibitem{alicki83nonlinear}
R.~Alicki and J.~Messer.
\newblock Nonlinear quantum dynamical semigroups for many-body open systems.
\newblock {\em Journal of Statistical Physics}, 32:299--312, 1983.

\bibitem{amini25brownian}
H.~Amini, N.H. Amini, S.~Chalal, and G.~Guo.
\newblock Brownian motion on the fubini extension space and applications.
\newblock {\em preprint arXiv:2509.12096}, 2025.

\bibitem{amini23graphon}
H.~Amini, Z.~Cao, and A.~Sulem.
\newblock Stochastic graphon games with jumps and approximate nash equilibria.
\newblock {\em SSRN}, 2023.

\bibitem{amini24exponential}
N.H. Amini, M.~Bompais, and C.~Pellegrini.
\newblock Exponential selection and feedback stabilization of invariant
  subspaces of quantum trajectories.
\newblock {\em SIAM Journal on Control and Optimization}, 62(5):2834--2857,
  2024.

\bibitem{aurell22sto}
A.~Aurell, R.~Carmona, and M.~Lauriere.
\newblock Stochastic graphon games: Ii. the linear-quadratic case.
\newblock {\em Applied Mathematics \& Optimization}, 85(3):39, 2022.

\bibitem{ayi24large}
N.~Ayi and N.P. Duteil.
\newblock Large-population limits of non-exchangeable particle systems.
\newblock {\em Active Particles, Volume 4}, pages 79--133, 2024.

\bibitem{BB91}
A.~Barchielli and V.P. Belavkin.
\newblock Measurements continuous in time and a posteriori states in quantum
  mechanics.
\newblock {\em Journal of Physics A: Mathematical and General}, 24(7):1495,
  1991.

\bibitem{bar09}
A.~Barchielli and M.~Gregoratti.
\newblock {\em Quantum trajectories and measurements in continuous time: the
  diffusive case}, volume 782.
\newblock Springer, 2009.

\bibitem{bar95const}
A.~Barchielli and A.S. Holevo.
\newblock Constructing quantum measurement processes via classical stochastic
  calculus.
\newblock {\em Stochastic Processes and their applications}, 58(2):293--317,
  1995.

\bibitem{bardos00weak}
C.~Bardos, F.~Golse, and N.J. Mauser.
\newblock Weak coupling limit of the $ n $-particle schr{\"o}dinger equation.
\newblock {\em Methods and Applications of Analysis}, 7(2):275--294, 2000.

\bibitem{bayraktar23graphon}
E.~Bayraktar, S.~Chakraborty, and R.~Wu.
\newblock Graphon mean field systems.
\newblock {\em The Annals of Applied Probability}, 33(5):3587--3619, 2023.

\bibitem{belavkin83theory}
V.P. Belavkin.
\newblock On the theory of controlling observable quantum systems.
\newblock {\em Avtomatika i Telemekhanika}, (2):50--63, 1983.

\bibitem{belavkin87non}
V.P. Belavkin.
\newblock Non-demolition measurement and control in quantum dynamical systems.
\newblock {\em Information complexity and control in quantum physics}, pages
  311--329, 1987.

\bibitem{belavkin92quantum}
V.P. Belavkin.
\newblock Quantum stochastic calculus and quantum nonlinear filtering.
\newblock {\em Journal of Multivariate analysis}, 42(2):171--201, 1992.

\bibitem{belavkin03eventum}
V.P. Belavkin.
\newblock Quantum trajectories, state diffusion, and time-asymmetric eventum
  mechanics.
\newblock {\em International Journal of Theoretical Physics}, 42:2461--2485,
  2003.

\bibitem{belavkin09cybernetics}
V.P. Belavkin.
\newblock Towards quantum cybernetics : Optimal feedback control in quantum
  bio-informatics.
\newblock {\em Quantum Bio-Informatics II: From Quantum Information to
  Bio-Informatics}, pages 30--41, 2009.

\bibitem{belavkin09qdpp}
V.P. Belavkin, A.~Negretti, and K.~M{\o}lmer.
\newblock Dynamical programming of continuously observed quantum systems.
\newblock {\em Physical Review A}, 79(2):022123, 2009.

\bibitem{benoist14large}
T.~Benoist and C.~Pellegrini.
\newblock Large time behavior and convergence rate for quantum filters under
  standard non demolition conditions.
\newblock {\em Communications in Mathematical Physics}, 331(2):703--723, 2014.

\bibitem{bensoussan92stochastic}
A.~Bensoussan.
\newblock {\em Stochastic control of partially observable systems}.
\newblock Cambridge University Press, 1992.

\bibitem{bet24weakly}
G.~Bet, F.~Coppini, and F.R. Nardi.
\newblock Weakly interacting oscillators on dense random graphs.
\newblock {\em Journal of Applied Probability}, 61(1):255--278, 2024.

\bibitem{boutenhandel07}
L.~Bouten, R.~Van~Handel, and M.R. James.
\newblock An introduction to quantum filtering.
\newblock {\em SIAM J. Control Optim.}, 46(6):2199–2241, dec 2007.

\bibitem{boutenhandel09}
L.~Bouten, R.~Van~Handel, and M.R. James.
\newblock A discrete invitation to quantum filtering and feedback control.
\newblock {\em SIAM Review}, 51(2):239--316, 2009.

\bibitem{caines19graphon}
P.E. Caines and M.~Huang.
\newblock Graphon mean field games and the gmfg equations: $\varepsilon$-nash
  equilibria.
\newblock {\em 2019 IEEE 58th conference on decision and control (CDC)}, pages
  286--292, 2019.

\bibitem{cardona19continuous}
G.~Cardona, A.~Sarlette, and P.~Rouchon.
\newblock Continuous-time quantum error correction with noise-assisted quantum
  feedback.
\newblock {\em IFAC-PapersOnLine}, 52(16):198--203, 2019.

\bibitem{carmonadelarue18I}
R.~Carmona and F.~Delarue.
\newblock {\em Probabilistic theory of mean field games with applications I}.
\newblock Springer, 2018.

\bibitem{chaintron22I}
L-P. Chaintron and A.~Diez.
\newblock Propagation of chaos: A review of models, methods and applications.
  i. models and methods.
\newblock {\em Kinetic and Related Models}, 15(6):895--1015, 2022.

\bibitem{chaintron22II}
L-P. Chaintron and A.~Diez.
\newblock Propagation of chaos: A review of models, methods and applications.
  ii. applications.
\newblock {\em Kinetic and Related Models}, 15(6):1017--1173, 2022.

\bibitem{chalal23mean}
S.~Chalal, N.H. Amini, and G.~Guo.
\newblock On the mean-field belavkin filtering equation.
\newblock {\em IEEE Control Systems Letters}, 2023.

\bibitem{coppini22note}
F.~Coppini.
\newblock A note on fokker--planck equations and graphons.
\newblock {\em Journal of Statistical Physics}, 187(2):15, 2022.

\bibitem{coppini25nonlinear}
F.~Coppini, A.~De~Crescenzo, and H.~Pham.
\newblock Nonlinear graphon mean-field systems.
\newblock {\em Stochastic Processes and their Applications}, page 104728, 2025.

\bibitem{crucianelli24interacting}
C.~Crucianelli and L.~Tangpi.
\newblock Interacting particle systems on sparse $ w $-random graphs.
\newblock {\em preprint arXiv:2410.11240}, 2024.

\bibitem{SDE14inf}
G.~Da~Prato and J.~Zabczyk.
\newblock {\em Stochastic equations in infinite dimensions}, volume 152.
\newblock Cambridge university press, 2014.

\bibitem{doherty00quantum}
A.C. Doherty, S.~Habib, K.~Jacobs, H.~Mabuchi, and S.M. Tan.
\newblock Quantum feedback control and classical control theory.
\newblock {\em Physical Review A}, 62(1):012105, 2000.

\bibitem{erdHos2010derivation}
L.~Erd{\H{o}}s, B.~Schlein, and H-T. Yau.
\newblock Derivation of the {Gross-Pitaevskii} equation for the dynamics of
  {Bose-Einstein} condensate.
\newblock {\em Annals of mathematics}, pages 291--370, 2010.

\bibitem{fagnola12sto}
F.~Fagnola and C.M. Mora.
\newblock Stochastic schr{\"o}dinger equations and applications to
  ehrenfest-type theorems.
\newblock {\em Latin American Journal of Probability and Mathematical
  Statistics}, 2012.

\bibitem{gough18introduction}
J.E. Gough.
\newblock An introduction to quantum filtering.
\newblock {\em preprint arXiv:1804.09086}, 2018.

\bibitem{gough05hamilton}
J.E. Gough, V.P. Belavkin, and O.G. Smolyanov.
\newblock Hamilton--jacobi--bellman equations for quantum optimal feedback
  control.
\newblock {\em Journal of Optics B: Quantum and Semiclassical Optics},
  7(10):S237, 2005.

\bibitem{gough18quantumfield}
J.E. Gough and J.~Kupsch.
\newblock {\em Quantum fields and processes: a combinatorial approach}.
\newblock Cambridge University Press, 2018.

\bibitem{grigoletto25reduction}
T.~Grigoletto, C.~Pellegrini, and F.~Ticozzi.
\newblock Quantum model reduction for continuous-time quantum filters.
\newblock {\em Annales Henri Poincar{\'e}}, pages 1--53, 2025.

\bibitem{holevo96}
A.S. Holevo.
\newblock On dissipative stochastic equations in a hilbert space.
\newblock {\em Probability theory and related fields}, 104:483--500, 1996.

\bibitem{jabin25mean}
P-E. Jabin, D.~Poyato, and J.~Soler.
\newblock Mean-field limit of non-exchangeable systems.
\newblock {\em Communications on Pure and Applied Mathematics}, 78(4):651--741,
  2025.

\bibitem{judd85law}
K.L. Judd.
\newblock The law of large numbers with a continuum of iid random variables.
\newblock {\em Journal of Economic theory}, 35(1):19--25, 1985.

\bibitem{kac56}
M.~Kac.
\newblock Foundations of kinetic theory.
\newblock {\em Proceedings of The third Berkeley symposium on mathematical
  statistics and probability}, 3:171--197, 1956.

\bibitem{knowles10mean}
A.~Knowles and P.~Pickl.
\newblock Mean-field dynamics: singular potentials and rate of convergence.
\newblock {\em Communications in Mathematical Physics}, 298:101--138, 2010.

\bibitem{kolokoltsov21law}
V.N. Kolokoltsov.
\newblock The law of large numbers for quantum stochastic filtering and control
  of many-particle systems.
\newblock {\em Theoretical and Mathematical Physics}, 208(1):937--957, 2021.

\bibitem{kolokoltsov22dynamic}
V.N. Kolokoltsov.
\newblock Dynamic quantum games.
\newblock {\em Dynamic Games and Applications}, 12(2):552--573, 2022.

\bibitem{kolokoltsov22qmfg}
V.N. Kolokoltsov.
\newblock Quantum mean-field games.
\newblock {\em The Annals of Applied Probability}, 32(3):2254--2288, 2022.

\bibitem{kolokoltsov25quantumstoeq}
V.N. Kolokoltsov.
\newblock On quantum stochastic master equations.
\newblock {\em Electronic Journal of Probability}, 30:1--21, 2025.

\bibitem{kolokoltsov25mathematical}
V.N. Kolokoltsov.
\newblock On the mathematical theory of quantum stochastic filtering equations
  for mixed states.
\newblock {\em preprint arXiv:2505.14605}, 2025.

\bibitem{lacker22label}
D.~Lacker and A.~Soret.
\newblock A label-state formulation of stochastic graphon games and approximate
  equilibria on large networks.
\newblock {\em Mathematics of Operations Research}, 2022.

\bibitem{lewin14derivation}
M.~Lewin, P.~T. Nam, and N.~Rougerie.
\newblock Derivation of {H}artree's theory for generic mean-field bose systems.
\newblock {\em Advances in Mathematics}, 254:570--621, 2014.

\bibitem{liang19exponential}
W.~Liang, N.H. Amini, and P.~Mason.
\newblock On exponential stabilization of n-level quantum angular momentum
  systems.
\newblock {\em SIAM Journal on Control and Optimization}, 57(6):3939--3960,
  2019.

\bibitem{liurockner15spde}
W.~Liu and M.~R{\"o}ckner.
\newblock {\em Stochastic partial differential equations: an introduction}.
\newblock Springer, 2015.

\bibitem{lonigro24liouville}
D.~Lonigro, A.~Hahn, and D.~Burgarth.
\newblock On the liouville--von neumann equation for unbounded hamiltonians.
\newblock {\em Open Systems \& Information Dynamics}, 31(04):2450018, 2024.

\bibitem{lovasz12large}
L.~Lov{\'a}sz.
\newblock {\em Large networks and graph limits}, volume~60.
\newblock American Mathematical Soc., 2012.

\bibitem{mirrahimiHandel07}
M.~Mirrahimi and R.~Van~Handel.
\newblock Stabilizing feedback controls for quantum systems.
\newblock {\em {SIAM} J. Control. Optim.}, 46(2):445--467, 2007.

\bibitem{mischler13kac}
S.~Mischler and C.~Mouhot.
\newblock Kac’s program in kinetic theory.
\newblock {\em Inventiones mathematicae}, 193:1--147, 2013.

\bibitem{misra77zeno}
B.~Misra and EC.G Sudarshan.
\newblock The zeno’s paradox in quantum theory.
\newblock {\em Journal of Mathematical Physics}, 18(4):756--763, 1977.

\bibitem{mora08basic}
C.M. Mora and R.~Rebolledo.
\newblock Basic properties of nonlinear stochastic schr{\"o}dinger equations
  driven by brownian motions.
\newblock {\em The Annals of Applied Probability}, 2008.

\bibitem{parise23graphon}
F.~Parise and A.~Ozdaglar.
\newblock Graphon games: A statistical framework for network games and
  interventions.
\newblock {\em Econometrica}, 91(1):191--225, 2023.

\bibitem{parthasarathy12}
K.R. Parthasarathy.
\newblock {\em An introduction to quantum stochastic calculus}, volume~85.
\newblock Birkh{\"a}user, 2012.

\bibitem{pearce1981mean}
P.~A. Pearce.
\newblock Mean-field bounds on the magnetization for ferromagnetic spin models.
\newblock {\em Journal of Statistical Physics}, 25(2):309--320, 1981.

\bibitem{pellegrini08diffusive}
C.~Pellegrini.
\newblock Existence, uniqueness and approximation of a stochastic
  schr{\"o}dinger equation: the diffusive case.
\newblock {\em The Annals of Probability}, 36(6):2332--2353, 2008.

\bibitem{pellegrini10jump}
C.~Pellegrini.
\newblock Existence, uniqueness and approximation of the jump-type stochastic
  schr{\"o}dinger equation for two-level systems.
\newblock {\em Stochastic Processes and their Applications}, 120(9):1722--1747,
  2010.

\bibitem{pellegrini10markov}
C.~Pellegrini.
\newblock Markov chains approximation of jump-diffusion stochastic master
  equations.
\newblock {\em Annales de l'IHP Probabilit{\'e}s et statistiques},
  46(4):924--948, 2010.

\bibitem{pickl11simple}
P.~Pickl.
\newblock A simple derivation of mean field limits for quantum systems.
\newblock {\em Letters in Mathematical Physics}, 97(2):151--164, 2011.

\bibitem{porat24pickl}
I.B. Porat and F.~Golse.
\newblock Pickl’s proof of the quantum mean-field limit and quantum
  klimontovich solutions.
\newblock {\em Letters in Mathematical Physics}, 114(2):51, 2024.

\bibitem{sayrin11real}
C.~Sayrin, I.~Dotsenko, X.~Zhou, B.~Peaudecerf, T.~Rybarczyk, S.~Gleyzes,
  P.~Rouchon, M.~Mirrahimi, H.~Amini, M.~Brune, J-M Raimond, and S.~Haroche.
\newblock Real-time quantum feedback prepares and stabilizes photon number
  states.
\newblock {\em Nature}, 477(7362):73--77, 2011.

\bibitem{searle24thermo}
A.~Searle and J.~Tindall.
\newblock Thermodynamic limit of spin systems on random graphs.
\newblock {\em Physical Review Research}, 6(1):013011, 2024.

\bibitem{spohn80kinetic}
H.~Spohn.
\newblock Kinetic equations from hamiltonian dynamics: Markovian limits.
\newblock {\em Reviews of Modern Physics}, 52(3):569, 1980.

\bibitem{stockton04deter}
J.K. Stockton, R.~Van~Handel, and H.~Mabuchi.
\newblock Deterministic dicke-state preparation with continuous measurement and
  control.
\newblock {\em Physical Review A}, 70(2):022106, 2004.

\bibitem{sun98almost}
Y.~Sun.
\newblock The almost equivalence of pairwise and mutual independence and the
  duality with exchangeability.
\newblock {\em Probability Theory and Related Fields}, 112(3):425--456, 1998.

\bibitem{sun06exact}
Y.~Sun.
\newblock The exact law of large numbers via fubini extension and
  characterization of insurable risks.
\newblock {\em Journal of Economic Theory}, 126(1):31--69, 2006.

\bibitem{sun09individual}
Y.~Sun and Y.~Zhang.
\newblock Individual risk and lebesgue extension without aggregate uncertainty.
\newblock {\em Journal of Economic Theory}, 144(1):432--443, 2009.

\bibitem{tezak17low}
N.~Tezak, N.H Amini, and H.~Mabuchi.
\newblock Low-dimensional manifolds for exact representation of open quantum
  systems.
\newblock {\em Physical Review A}, 96(6):062113, 2017.

\bibitem{tindall22}
J.~Tindall, A.~Searle, A.~Alhajri, and D.~Jaksch.
\newblock Quantum physics in connected worlds.
\newblock {\em Nature Communications}, 13(1):7445, 2022.

\bibitem{tsumura07}
K.~Tsumura.
\newblock Global stabilization of n-dimensional quantum spin systems via
  continuous feedback.
\newblock {\em 2007 IEEE 26th American Control Conference (ACC)}, pages
  2129--2134, 2007.

\bibitem{handel05project}
R.~Van~Handel and H.~Mabuchi.
\newblock Quantum projection filter for a highly nonlinear model in cavity qed.
\newblock {\em Journal of Optics B: Quantum and Semiclassical Optics},
  7(10):S226, 2005.

\bibitem{handel05}
R.~Van~Handel, J.K. Stockton, and H.~Mabuchi.
\newblock Modelling and feedback control design for quantum state preparation.
\newblock {\em Journal of Optics B: Quantum and Semiclassical Optics},
  7(10):S179, September 2005.

\bibitem{UMD15}
J.M.A.M Van~Neerven, M.C. Veraar, and L.~Weis.
\newblock Stochastic integration in banach spaces--a survey.
\newblock {\em Stochastic analysis: a series of lectures}, 68:297--332, 2015.

\bibitem{wiseman09quantum}
H.M. Wiseman and G.J. Milburn.
\newblock {\em Quantum measurement and control}.
\newblock Cambridge university press, 2009.

\bibitem{yong99stochastic}
J.~Yong and X.Y. Zhou.
\newblock {\em Stochastic controls: Hamiltonian systems and HJB equations},
  volume~43.
\newblock Springer Science \& Business Media, 1999.

\end{thebibliography}

\end{document}